\documentclass{amsart}
\usepackage[hidelinks]{hyperref}
\usepackage{cleveref}
\usepackage{amsmath, amssymb}
\usepackage{enumitem}
\usepackage[T1]{fontenc}    

\setlist[enumerate]{label=(\alph*)}

\usepackage{tikz}
\usepackage{string-diagrams}
\usetikzlibrary{backgrounds, calc}
\usetikzlibrary{positioning, arrows.meta}

\usepackage[maxbibnames=10, 
  url=false, 
  date=year,
  isbn=false]{biblatex}

\AtEveryBibitem{\clearlist{language}}

\newcommand{\C}{\mathbb{C}}
\newcommand{\N}{\mathbb{N}}
\newcommand{\Z}{\mathbb{Z}}

\newtheorem{thm}{Theorem}[section]
\newtheorem{proposition}[thm]{Proposition}
\newtheorem{corollary}[thm]{Corollary}
\newtheorem{lemma}[thm]{Lemma}

\theoremstyle{definition}
\newtheorem{remark}[thm]{Remark}
\newtheorem{definition}[thm]{Definition}
\newtheorem{example}[thm]{Example}
\newtheorem{notation}[thm]{Notation}

\newtheorem{introthm}{Theorem}

\theoremstyle{definition}

\title[voltage quantum graphs]{voltage quantum graphs and a Gross--Tucker Theorem for quantum graphs}
\author{Bj\"orn Sch\"afer and Mariusz Tobolski}
\date{\today}

\begin{document}

\begin{abstract}
    A voltage graph is a finite directed graph whose edges are labeled by elements of a finite group $G$. A classical construction of Gross and Tucker associates to every voltage graph with vertex set $V$ a so-called derived graph with vertex set $V \times G$. We generalize their construction to quantum graphs and finite abelian groups. Remarkably, the construction can produce true quantum graphs starting from a classical voltage graph. In this case the obtained quantum graph is quantum isomorphic to a classical graph.
    As a main result we also prove a quantum version of the Gross--Tucker theorem which characterizes precisely which graphs can be written as derived graphs of voltage graphs.
\end{abstract}

\maketitle

\section{Introduction}

By a famous result of Erd\H{o}s and R\'enyi, almost all finite graphs have no symmetries \cite{erdos_asymmetric_1963}. Yet most graphs that one encounters in real life exhibit some symmetry, starting from the cyclic graphs to more complex ones like the Petersen graph. How can one exploit these symmetries? 
In the 1970s, Gross and Tucker developed a method to describe graphs with symmetries in terms of smaller labeled graphs in an insightful way \cite{gross_voltage_1974, gross_topological_1987}. The main object of their theory are so-called \emph{voltage graphs}. A voltage graph is a pair $(\tilde{\Gamma}, \lambda)$ consisting of a finite (not necessarily simple) graph $\tilde{\Gamma}$ together with a labeling $\lambda: E_{\tilde{\Gamma}} \to G$ of its edges by elements of a finite group $G$. Then, they introduced the \emph{derived graph} $\Gamma^\lambda$ as a particular graph on the vertex set $V_{\tilde{\Gamma}} \times G$ where the edges are determined by the labeling $\lambda$. Gross and Tucker's main theorem states that a graph $\Gamma$ is isomorphic to a derived graph $\tilde{\Gamma}^\lambda$ if and only if there is a free action of the group $G$ on $\Gamma$. As an example, Figure~\ref{intro::fig:petersen_graph_as_derived_graph} depicts the Petersen graph $\Gamma$ together with a voltage graph $\tilde{\Gamma}$ such that $\Gamma \cong \tilde{\Gamma}^\lambda$.
\begin{figure}
    \centering
    \begin{tikzpicture}[scale=1.5]
    \begin{scope}[every node/.style={circle, fill=black, inner sep=1.5pt}]
        \foreach \i in {1,...,5} {
          \node (o\i) at ({72*\i+18}:1) {};
        }
        \foreach \i in {1,...,5} {
          \node (i\i) at ({72*\i+18}:0.4) {};
        }
        \foreach \i [evaluate=\i as \j using {int(mod(\i,5)+1)}] in {1,...,5} {
          \draw (o\i) -- (o\j);
        }
        \foreach \i [evaluate=\i as \j using {int(mod(\i+1,5)+1)}] in {1,...,5} {
          \draw (i\i) -- (i\j);
        }
        \foreach \i in {1,...,5} {
          \draw (o\i) -- (i\i);
        }
    \end{scope}
        
    \begin{scope}[xshift=3cm, vertex/.style={circle, fill=black, inner sep=1.5pt}]
        \node[vertex] (L) at (0,0) {};
        \node[vertex] (R) at (1,0) {};
        \draw[->, looseness=20]
          (L) to[out=130,in=230] node[left] {$1_{\Z_5}$} (L);
        \draw[->, looseness=20]
          (R) to[out=50,in=310] node[right] {$0_{\Z_5}$} (R);
        \draw[->]
          (L) -- node[above] {$2_{\Z_5}$} (R);
        \end{scope}
    \end{tikzpicture}
    \caption{The Petersen graph is the derived graph of a voltage graph over the group $\Z_5$.}
    \label{intro::fig:petersen_graph_as_derived_graph}
\end{figure}
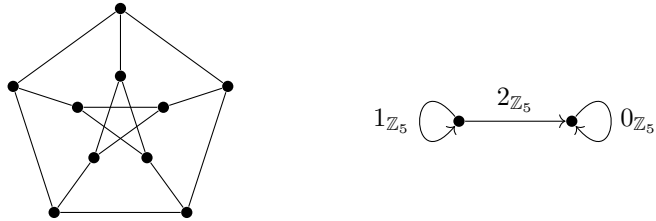
Voltage and derived graphs have found applications in a number of settings, e.g. for generating graph coverings \cite{gross_coverings_1977}, for constructing large graphs with small degree and diameter \cite{brankovic_large_1998} or for lifting graph automorphisms \cite{malnic_lifting_2000}. Kumjian and Pask used voltage graphs to decompose graph C*-algebras into crossed products up to stable isomorphism \cite{kumjian_c-algebras_1999}. 

In this article we generalize derived graphs and the Gross--Tucker theorem to \emph{quantum graphs}. Quantum graphs are a recent generalization of classical graphs where the (finite) vertex set is replaced with an arbitrary finite-dimensional C*-algebra. By Gelfand duality a finite set of $n$ points corresponds to the C*-algebra $\C^n$, and in this spirit any finite-dimensional C*-algebra is considered as a (finite) quantum set. For a number of reasons, though, a quantum set is indeed a pair $(B, \psi)$ of a finite-dimensional C*-algebra and a particular functional $\psi$.
For instance, a classification of (undirected) quantum graphs on the quantum set $(M_2, 2 \mathrm{Tr})$ can be found in \cite{gromada_examples_2022, matsuda_classification_2022}, where $M_2$ is the C*-algebra of $2 \times 2$ complex matrices.

Quantum graphs were introduced in the context of quantum channels by Duan, Severini and Winter in 2012 \cite{duan_zero-error_2013} (under the name of \emph{noncommutative graphs}). Independently, Weaver had developed a theory of quantum relations which also encompasses quantum graphs \cite{weaver_quantum_2012,weaver_quantum_graphs_2015}. Later work of Musto, Reutter and Verdon \cite{musto_compositional_2018} placed quantum graphs in a categorical framework which includes quantum versions of sets, (bijective) functions and more. Importantly, there are three different ways to define a quantum graph, but all definitions lead to equivalent notions of quantum graphs, see e.g. \cite{musto_compositional_2018, daws_quantum_2024, wasilewski_quantum_2024}. Other applications of quantum graphs include the recent definition of quantum Cayley graphs by Wasilewski \cite{wasilewski_quantum_2024} and the description of all finite quantum groups as quantum automorphism groups of quantum graphs \cite{brannan_quantum_2025}. Further, quantum graphs are used to define quantum Cuntz--Krieger algebras in \cite{brannan_quantum_CuntzKrieger_2022, brannan_quantum_edge_2022}.

In this paper, we introduce the notion of \emph{voltage quantum graphs} and their \emph{derived quantum graphs} (Definition \ref{der::def:derived_quantum_graph}).
Intuitively, a voltage quantum graph is a quantum multi-graph with an edge labeling by elements of a finite abelian group $G$. Classically, the derived graph of a voltage graph is defined on the Cartesian product of the vertex set of the voltage graph with the group $G$. In the quantum setting, we have more flexibility: A voltage quantum graph is a $G$-labeled family of quantum adjacency matrices $\tilde{A} = (\tilde{A}_g)_{g \in G}$ on a quantum set $(\tilde{B}, \tilde{\psi})$ \textit{with respect to} an action $\hat{\alpha}$ of the dual group $\hat{G}$ on $\tilde{A}$. Then, the derived quantum graph
\begin{align*}
    A := \tilde{A} \rtimes_{\hat{\alpha}} \hat{G}
\end{align*}
is defined on the \emph{crossed product quantum set} (Definition \ref{crossed::def:crossed_product_quantum_set})
\begin{align*}
    (\tilde{B}, \tilde{\psi}) \rtimes_{\hat{\alpha}} \hat{G} := (\tilde{B} \rtimes_{\hat{\alpha}} \hat{G}, \psi_\rtimes),
\end{align*}
where $\psi_\rtimes$ is suitably defined.
To our knowledge, crossed product quantum sets have not been considered in the literature before. Note that the action $\hat{\alpha}$ has no equivalent in Gross and Tucker's theory. Their voltage graphs are recovered in the special case where $\tilde{A}$ is a family of classical adjacency matrices and $\hat{\alpha}$ is the trivial action. To obtain more general results for non-abelian groups, we intend to investigate voltage quantum graphs with respect to \emph{quantum} groups in a future work.

Quantum graphs on quantum sets of the form $(\C^n, \psi_n)$ (for suitable $\psi_n$) correspond to classical graphs on $n$ vertices. It is a remarkable feature of our construction that one can start with a classical voltage graph $\tilde{A}$ and obtain a quantum graph which is defined on a true quantum set, i.e. one that does not correspond to a classical set. For instance, let $\tilde{A}$ be a classical voltage graph on $2$ vertices. If $\hat{\alpha}$ is the action of $\hat{\Z}_2$ which swaps the two vertices, then the crossed product quantum set is
\begin{align*}
    (\C^2, \psi_2) \rtimes_{\hat{\alpha}} \hat{\Z}_2 \cong (M_2, 2 \mathrm{Tr}).
\end{align*}
Accordingly, the derived quantum graph $\tilde{A} \rtimes_{\hat{\alpha}} \hat{\Z}_2$ is a quantum graph on the quantum set $(M_2, 2 \mathrm{Tr})$. In fact, by \cite{matsuda_classification_2022, gromada_examples_2022} there are only four non-isomorphic (undirected, loopfree) quantum graphs on $(M_2, 2 \mathrm{Tr})$, and they are all obtained in this way (Proposition \ref{ex::prop:qgraphs_on_M2_as_derived_qgraphs}).

Interestingly, all these four quantum graphs on $(M_2, 2 \mathrm{Tr})$ are \emph{quantum isomorphic} to classical graphs on four vertices \cite{matsuda_classification_2022}. Quantum isomorphism is a weaker notion than classical isomorphism, and it is defined in the very same spirit as quantum sets and quantum graphs. Quantum isomorphisms of classical graphs have been extensively studied \cite{atserias_quantum_2019, mancinska_quantum_2019}.
We recover the quantum isomorphism between quantum graphs on $(M_2, 2 \mathrm{Tr})$ and $(\C^4, \psi_4)$ as a special case of a much more general phenomenon.

\begin{introthm}[Theorem \ref{qiso::thm:quantum_isomorphism_between_derived_graphs}]
    If $\tilde{A} = (\tilde{A}_g)_{g \in G}$ is a voltage quantum graph on $(\tilde{B}, \tilde{\psi})$ with respect to the action $\hat{\alpha}$, then we have a quantum isomorphism
    \begin{align*}
        \tilde{A} \rtimes_{\hat{\alpha}} \hat{G} \cong_q \tilde{A} \rtimes_{\mathrm{triv}} \hat{G},
    \end{align*}
    where we denote by $\mathrm{triv}$ the trivial action of $\hat{G}$ on $(\tilde{B}, \tilde{\psi})$.
\end{introthm}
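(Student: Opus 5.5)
We will construct the quantum isomorphism explicitly as a bi-unitary (a unitary implementing a $\ast$-isomorphism up to quantum symmetries), via a quantum bijection whose coefficient algebra is $C(\hat G)\cong C^*(G)$, or simply a finite-dimensional algebra $M_{|G|}$. The model is the classical fact that a crossed product by an action and by the trivial action become isomorphic after tensoring with a suitable matrix algebra (Takai-type duality, or "untwisting" for abelian groups). Concretely, $\tilde B\rtimes_{\hat\alpha}\hat G$ is spanned by elements $b\,u_\chi$ with $b\in\tilde B$, $\chi\in\hat G$, and $\tilde B\rtimes_{\mathrm{triv}}\hat G\cong \tilde B\otimes C^*(\hat G)\cong \tilde B\otimes C(G)$ is spanned by $b\otimes u_\chi$.

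The candidate quantum bijection is a unital $\ast$-homomorphism
\begin{align*}
P\colon \tilde B\rtimes_{\mathrm{triv}}\hat G\to (\tilde B\rtimes_{\hat\alpha}\hat G)\otimes \mathcal{O}, \qquad b\otimes u_\chi\mapsto \sum \ldots,
\end{align*}
where $\mathcal O$ is the coefficient C*-algebra. We take $\mathcal O=\tilde B\rtimes_{\hat\alpha}\hat G$'s twin, or more economically $\mathcal O=C(\hat G)\rtimes \ldots$; the simplest choice is $\mathcal O=\mathrm{End}(\ell^2(\hat G))$ with the regular representation. The formula is designed so that the generator $u_\chi$ absorbs the twist: $b\otimes u_\chi\mapsto \sum_{\eta} \hat\alpha_{\eta}(b)u_\chi\otimes e_{\eta,\eta}\,\lambda_\chi$, in the spirit of Fell absorption.

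We then carry out three steps.

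\begin{enumerate}
\item Verify that $P$ is a well-defined unital $\ast$-homomorphism. This amounts to checking the covariance relations $u_\chi b u_\chi^* = \hat\alpha_\chi(b)$ on one side against commutation on the other, using that $\hat G$ is abelian.
\item Verify that $P$ intertwines the canonical functionals $\psi_\rtimes$ (the quantum-bijection condition $(\psi\otimes\mathrm{id})P=\psi\cdot 1$) and the counit and multiplication structure. Together these show that $P$ is a quantum bijection in the sense of Musto--Reutter--Verdon. Since $\psi_\rtimes(b u_\chi)=\delta_{\chi,1}\tilde\psi(b)$ and $\tilde\psi$ is $\hat\alpha$-invariant, this should be a direct computation.
\item Show that $P$ intertwines the adjacency matrices, $(A\otimes\mathrm{id})P = P A'$. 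Here $A=\tilde A\rtimes_{\hat\alpha}\hat G$ and $A'=\tilde A\rtimes_{\mathrm{triv}}\hat G$. On a spanning element $b u_\chi$, the derived adjacency matrix presumably acts as $\sum_g \chi(g)\,\tilde A_g(b)\,u_\chi$ or similar. The intertwining then reduces to $\tilde A_g\circ\hat\alpha_\eta=\hat\alpha_\eta\circ\tilde A_g$, which is exactly the hypothesis that $\hat\alpha$ acts on $\tilde A$.
\end{enumerate}

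The main obstacle is step (a): finding the correct twisting formula so that $P$ is multiplicative for the twisted product, and so that the coefficients $P_{ij}$ form a magic unitary. I would first try to realize $P$ through a unitary $W\in (\tilde B\rtimes_{\hat\alpha}\hat G)\otimes M_{|\hat G|}$ conjugating the trivial crossed product into the twisted one. Such a $W$ exists because, after tensoring with $\mathcal B(\ell^2(\hat G))$, the action $\hat\alpha$ becomes inner-equivalent to the trivial action. This is an exterior equivalence, i.e.\ a stabilization argument. The needed unitary is $W=\sum_\eta u_\eta\otimes e_{\eta\eta}$. If this is cumbersome, a fallback is to use the equivalent criterion that quantum isomorphic graphs are those with monoidally equivalent quantum automorphism categories. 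Alternatively, one can build a $\hat G$-graded Hopf–Galois object and untwist by a $2$-cocycle on the dual.
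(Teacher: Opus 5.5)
Your map $b\otimes\tilde u_\chi\mapsto\sum_\eta\hat\alpha_\eta(b)u_\chi\otimes e_{\eta\eta}\lambda_\chi=\sum_\eta u_\eta b u_\eta^\ast u_\chi\otimes E_{\eta,\chi^{-1}\eta}$ is exactly the paper's $\rho$, and your plan follows the paper's proof: the $\ast$-homomorphism property via the covariance/commuting-ranges check (your $W$ also works, since $\rho=\mathrm{Ad}(W)\circ(\mathrm{id}_{\tilde B}\otimes\lambda)$), unitarity of $p$, and an intertwining that reduces precisely to $\tilde A_g\hat\alpha_\eta=\hat\alpha_\eta\tilde A_g$. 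The only differences are in the verifications, and they are simplifications: you check the intertwining directly from $(\tilde A\rtimes_{\hat\alpha}\hat G)(bu_\chi)=\sum_g\overline{\chi(g)}\tilde A_g(b)u_\chi$ rather than factor by factor through $X_g$ and $E_\rtimes^\ast\tilde A_gE_\rtimes$, and you get unitarity from $(\psi_\rtimes\otimes\mathrm{id})\rho=(\tilde\psi\otimes\psi_G)(\cdot)\,1$, which makes $p$ an isometry and hence unitary once you add the needed observation that both algebras have dimension $|G|\dim\tilde B$, instead of computing $p^\ast$ explicitly.
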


A main upshot of the previous theorem is a method to construct ``quantum twins'' of a large number of classical graphs. By a quantum twin we mean a quantum graph that is quantum isomorphic to a given classical graph $\Gamma$. This method proceeds in three steps:
\begin{enumerate}
    \item Given a free action of a finite abelian group $G$ on $\Gamma$, use the Gross--Tucker theorem to find a (classical) voltage graph $(\tilde{\Gamma}, \lambda)$ such that $\Gamma \cong \tilde{\Gamma}^\lambda$.
    \item Find a non-trivial action $\hat{\alpha}$ of $\hat{G}$ on $(\tilde{\Gamma}, \lambda)$.
    \item Conclude with Theorem \ref{qiso::thm:quantum_isomorphism_between_derived_graphs} that $(\tilde{\Gamma}, \lambda) \rtimes_{\hat{\alpha}} \hat{G}$ is a quantum graph that is quantum isomorphic to $\Gamma$.
\end{enumerate}
Here we identify a classical voltage graph $(\tilde{\Gamma}, \lambda)$ with a voltage quantum graph $\tilde{A} = (\tilde{A}_g)_{g \in G}$ in a natural way (see Proposition \ref{der::prop:classical_voltage_graphs_as_quantum_voltage_graphs}).

Finally, the main result of this work is a generalization of the Gross--Tucker theorem to quantum graphs: Classically, a graph $\Gamma$ is isomorphic to a derived graph $\tilde{\Gamma}^\lambda$ if and only if there is a free action of $G$ on $\Gamma$ such that $\tilde{\Gamma} \cong \Gamma / G$, i.e. the (unlabeled) voltage graph is isomorphic to the quotient graph. The first problem in generalizing this theorem is to find an analogue of a free action on a quantum set. Here, the answer is given by a theorem of Landstad \cite{landstad_duality_1979} which precisely characterizes when a given C*-algebra $B$ is isomorphic to a crossed product algebra $\tilde{B} \rtimes_{\hat{\alpha}} \hat{G}$. Note that the conditions in Landstad's theorem are stronger than mere freeness of the action which has been generalized to the setting of C*-algebras in various ways~\cite{phillips_freeness_1987, rieffel_proper_1990, baum_decommer_hajac_free_2017}.

\begin{introthm}[Theorem \ref{gtt::thm:quantum_gross_tucker_theorem}]
    Let $A: B \to B$ be a quantum graph on the quantum set $(B, \psi)$ and let $\alpha: G \to \mathrm{Aut}(A)$ be an action of a finite abelian group $G$ on $A$ which has the following properties:
    \begin{enumerate}
        \item There is a representation $(u_\chi)_{\chi \in \hat{G}} \subset B$ of the dual group $\hat{G}$ such that
        \begin{align*}
            \alpha_g(u_\chi) = \chi(g) u_\chi
        \end{align*}
        holds for all $g \in G$ and $\chi \in \hat{G}$.
        \item One has 
        \begin{align*}
            \psi \mathrm{Ad}(u_\chi) = \psi 
            \quad \text{ as well as } \quad
            \mathrm{Ad}(u_\chi) A = A \mathrm{Ad}(u_\chi)
        \end{align*}
        for all $\chi \in \hat{G}$.
    \end{enumerate}
    Then there is a voltage quantum graph $\tilde{A}$ on $(B^\alpha, \tilde{\psi})$ such that
    \begin{align*}
        A \cong \tilde{A} \rtimes_{\hat{\alpha}} \hat{G},
    \end{align*}
    where $\hat{\alpha}: \hat{G} \to \mathrm{Aut}(B^\alpha, \tilde{\psi})$ is given by $\hat{\alpha}_\chi(b) = u_\chi b u_\chi^*$ for all $b \in B^\alpha$ and $\chi \in \hat{G}$, and $\tilde{\psi} = \frac{1}{|G|} \psi|_{B^\alpha}$.
\end{introthm}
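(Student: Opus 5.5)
Property (a) puts us exactly in the setting of Landstad's theorem for the finite abelian group $\hat{G}$. The map $\Phi: B^\alpha \rtimes_{\hat{\alpha}} \hat{G} \to B$ is determined on generators by
\begin{align*}
    \Phi(b\, \lambda_\chi) = b\, u_\chi, \qquad b \in B^\alpha, \ \chi \in \hat{G},
\end{align*}
where $\lambda_\chi$ are the implementing unitaries of the crossed product. The covariance relation $u_\chi b u_\chi^* = \hat{\alpha}_\chi(b)$ holds by definition of $\hat{\alpha}$. We also need $\hat{\alpha}_\chi(b) \in B^\alpha$, which follows from $\alpha_g(u_\chi b u_\chi^*) = \chi(g)\overline{\chi(g)}\, u_\chi b u_\chi^*$. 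So $\Phi$ is a $*$-homomorphism.

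\textbf{Step 1: $\Phi$ is an isomorphism.} For surjectivity, decompose $b \in B$ into spectral components $b_\chi = \frac{1}{|G|}\sum_g \overline{\chi(g)} \alpha_g(b)$. Then $b_\chi u_\chi^* \in B^\alpha$, so $b = \sum_\chi (b_\chi u_\chi^*) u_\chi$ lies in the image. For injectivity, the family $(u_\chi)$ is linearly independent over $B^\alpha$ because the $u_\chi$ sit in distinct spectral subspaces. A dimension count, or faithfulness of the conditional expectation $E = \frac{1}{|G|}\sum_g \alpha_g$ onto $B^\alpha$, then gives injectivity.

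\textbf{Step 2: $\Phi$ matches the functionals.} Compute $\psi(b u_\chi)$. Since $\psi$ is $\alpha$-invariant (as part of $\alpha$ acting on the quantum graph) and $b u_\chi$ lies in the $\chi$-spectral subspace, $\psi(b u_\chi) = 0$ for $\chi \neq 1$. Hence $\psi\circ \Phi (b\lambda_\chi) = \delta_{\chi,1}\psi(b) = |G|\,\delta_{\chi,1}\tilde\psi(b)$. I would check that this equals $\psi_\rtimes$ from Definition \ref{crossed::def:crossed_product_quantum_set}, presumably the dual-weight-type functional $\tilde\psi\circ E_{\hat G}$ suitably normalized. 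I would also check that $\tilde\psi$ is $\hat\alpha$-invariant (this follows from $\psi\mathrm{Ad}(u_\chi)=\psi$) and that $(B^\alpha,\tilde\psi)$ is a quantum set in the required sense, e.g.\ a $\delta$-form. The normalization $\frac{1}{|G|}$ is chosen precisely to make the latter work.

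\textbf{Step 3: define the voltage graph.} Transport $A$ to $A' = \Phi^{-1} A \Phi$ on the crossed product. The labels should be extracted from how $A$ shifts spectral components. Since $A$ commutes with every $\alpha_g$, $A$ preserves each spectral subspace $B_\chi = B^\alpha u_\chi$. Since $A$ also commutes with $\mathrm{Ad}(u_\chi)$, I expect $A$ to act on $B_\chi = B^\alpha u_\chi$ through a family of maps on $B^\alpha$ indexed by the group. Concretely, I would define
\begin{align*}
    \tilde{A}_g(b) = \frac{1}{|G|}\sum_{\chi}\overline{\chi(g)}\, A(b u_\chi)u_\chi^*, \qquad b\in B^\alpha,
\end{align*}
a Fourier transform in $\chi$. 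This inverts the defining formula of $\tilde A\rtimes_{\hat\alpha}\hat G$ from Definition \ref{der::def:derived_quantum_graph}, which I expect to be $b\lambda_\chi \mapsto \sum_g \chi(g)\tilde A_g(b)\lambda_\chi$ or similar. The commutation $\mathrm{Ad}(u_\chi)A=A\mathrm{Ad}(u_\chi)$ gives $\hat\alpha$-equivariance of each $\tilde A_g$. Then $\Phi$ intertwines $\tilde A\rtimes\hat G$ with $A$ by construction.

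\textbf{Main obstacle.} The remaining, and hardest, step is verifying that $\tilde{A}$ satisfies the axioms of a voltage quantum graph. These axioms are presumably Schur-idempotency of each $\tilde A_g$, the appropriate Schur-orthogonality between different labels, and whatever condition in the definition involves the convolution structure. The idea is to transfer the quantum Schur product identity $m(A\otimes A)m^* = A$ on $B$ back through $\Phi$. The obstacle is that the comultiplication $m^*$ of $(B,\psi)$ mixes spectral components: $m^*(b u_\chi)$ decomposes as a sum over pairs $(\chi_1,\chi_2)$ with $\chi_1\chi_2=\chi$. I would handle this by writing $m^*$ in terms of $\tilde m^*$ on $B^\alpha$ and the unitaries $u_\chi$, using the $\delta$-form relation from Step 2, and then comparing Fourier coefficients in $g$. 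I expect this bookkeeping to be where most of the work lies.
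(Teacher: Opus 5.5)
\textbf{Verdict: right outline, but there is a gap.} Your plan follows the paper's route:
\begin{itemize}
\item Landstad for quantum sets;
\item the spectral maps $\tilde A_\chi\colon B^\alpha\to B^\alpha$ with $A(bu_\chi)=\tilde A_\chi(b)u_\chi$, which exist by $\alpha$-equivariance alone;
\item $\tilde A_g$ as their Fourier transform;
\item $\hat\alpha$-equivariance from $\mathrm{Ad}(u_\chi)A=A\,\mathrm{Ad}(u_\chi)$;
\item recovery of $A$ via $m(X_g\otimes E_\rtimes^\ast\tilde A_gE_\rtimes)m^\ast(bu_\chi)=\overline{\chi(g)}\tilde A_g(b)u_\chi$ from Lemma~\ref{der::lemma:commutation_in_derived_graph}.
\end{itemize}
However, the step you defer is the substance of the theorem, and you aim it at the wrong target.

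\textbf{What a voltage quantum graph requires.} It is only a family of quantum adjacency matrices, i.e.\ Schur-idempotent \emph{and} $\ast$-preserving maps, commuting with $\hat\alpha$. You never mention $\ast$-preservation. There is no convolution axiom, and there is no Schur-orthogonality between labels. The latter is in fact false in general for what the theorem produces. For the $4$-cycle with $\Z_2$ rotating by two steps, the quotient has two vertices joined by parallel edges labeled $0$ and $1$, so $\tilde m(\tilde A_0\otimes\tilde A_1)\tilde m^\ast\neq 0$. The same happens in item (c) of the proof of Proposition~\ref{ex::prop:qgraphs_on_M2_as_derived_qgraphs}.

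\textbf{Sign error.} Since $X_g$ carries $\overline{\chi(g)}$, the derived graph acts on $B^\alpha u_\chi$ by $\sum_g\overline{\chi(g)}\tilde A_g$. So you need $\tilde A_g=\frac1n\sum_\chi\chi(g)\tilde A_\chi$, which is exactly $m(X_g\otimes A)m^\ast|_{B^\alpha}$. With your $\overline{\chi(g)}$ the derived graph is $bu_\chi\mapsto\tilde A_{\chi^{-1}}(b)u_\chi$, not $A$. So ``intertwines by construction'' fails as written.

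\textbf{The missing computation.} It is short once you use the comultiplication from Lemma~\ref{land::lemma:description_of_crossed_product},
\[
m^\ast(bu_\chi)=\frac1n\sum_\zeta b_i^{(1)}u_\zeta\otimes\hat\alpha_{\zeta^{-1}}(b_i^{(2)})u_{\zeta^{-1}\chi},
\]
together with $u_\zeta c\,u_{\zeta^{-1}\chi}=\hat\alpha_\zeta(c)u_\chi$ for $c\in B^\alpha$:
\begin{align*}
    m(A\otimes A)m^\ast(bu_\chi)
    &= \frac{1}{n}\sum_{\zeta\in\hat G}\tilde A_\zeta\bigl(b_i^{(1)}\bigr)\,\hat\alpha_\zeta\Bigl(\tilde A_{\zeta^{-1}\chi}\bigl(\hat\alpha_{\zeta^{-1}}(b_i^{(2)})\bigr)\Bigr)u_\chi \\
    &= \frac{1}{n}\sum_{\zeta\in\hat G}\tilde A_\zeta\bigl(b_i^{(1)}\bigr)\,\tilde A_{\zeta^{-1}\chi}\bigl(b_i^{(2)}\bigr)u_\chi .
\end{align*}
The second equality is precisely the $\hat\alpha$-equivariance of $\tilde A_{\zeta^{-1}\chi}$. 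So the hypothesis $\mathrm{Ad}(u_\chi)A=A\,\mathrm{Ad}(u_\chi)$ is what removes the mixing of spectral components you were worried about, not merely what supplies the equivariance axiom.

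From here the proof closes in three steps:
\begin{itemize}
\item \textbf{Convolution identity.} $m(A\otimes A)m^\ast=A$ becomes $\tilde A_\chi(b)=\frac1n\sum_\zeta\tilde A_\zeta(b_i^{(1)})\tilde A_{\zeta^{-1}\chi}(b_i^{(2)})$ over $\hat G$.
\item \textbf{Schur-idempotency.} Because $\chi\mapsto\chi(g)$ is multiplicative, applying $\frac1n\sum_\chi\chi(g)(\cdot)$ turns the convolution identity into $\tilde m(\tilde A_g\otimes\tilde A_g)\tilde m^\ast=\tilde A_g$.
\item \textbf{$\ast$-preservation.} $A((bu_\chi)^\ast)=A(bu_\chi)^\ast$ plus equivariance gives $\tilde A_\chi(b)^\ast=\tilde A_{\chi^{-1}}(b^\ast)$, hence $\tilde A_g(b^\ast)=\tilde A_g(b)^\ast$.
\end{itemize}

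\textbf{Quantum-set property.} That $(B^\alpha,\tilde\psi)$ is a quantum set, in the paper's normalization $mm^\ast=\mathrm{Id}$ rather than a $\delta$-form, follows once $\psi=\psi_\rtimes$. Indeed $m_\rtimes m_\rtimes^\ast(bu_\chi)=\tilde m\tilde m^\ast(b)u_\chi$, which is Proposition~\ref{crossed::prop:qset_functional_on_crossed_product}.

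With these pieces your outline becomes the paper's proof.
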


To conclude, let us discuss the connection between classical derived graphs and graph C*-algebras in more detail. Graph C*-algebras have been introduced by Kumjian, Pask and Raeburn in \cite{kumjian_graphs_1997} as a generalization of Cuntz--Krieger algebras \cite{cuntz_class_1980}. 
A graph C*-algebra $C^\ast(\Gamma)$ is associated to a directed, non-simple graph $\Gamma$ so that many properties of the C*-algebra can be studied in terms of combinatorial properties of the graph $\Gamma$, see e.g. \cite{raeburn_graph_2005}. 
According to a theorem of Kumjian and Pask \cite{kumjian_c-algebras_1999} the graph C*-algebra of a derived graph is stably isomorphic to a crossed product C*-algebra, i.e. we have a stable isomorphism of the form
\begin{align*}
    C^\ast(\Gamma^\lambda) \cong_{\mathrm{stably}} C^\ast(\Gamma) \rtimes G.
\end{align*} 
Recently quantum Cuntz-Krieger C*-algebras have been introduced as a generalization of particular graph C*-algebras to quantum graphs \cite{brannan_quantum_CuntzKrieger_2022, brannan_quantum_edge_2022}. We plan to investigate if Kumjian and Pask's result can be extended to the setting of quantum Cuntz-Krieger algebras. Indeed, this has been an initial motivation for the present work.

\subsection{Outline}

In the next \textbf{Section \ref{sec::preliminaries}}, we introduce some notation about quantum graphs and crossed product C*-algebras. In \textbf{Section \ref{sec::crossed_prod_qsets}}, we discuss how an action on a quantum set gives rise to a crossed product quantum set. Then, in \textbf{Section \ref{sec::qvoltage_and_derived_graphs}}, we are ready to define voltage quantum graphs and derived quantum graphs. We show that derived quantum graphs with respect to different actions of the dual group are quantum isomorphic in \textbf{Section \ref{sec::quantum_isomorphisms}}, before we prove a Gross--Tucker theorem for derived quantum graphs in \textbf{Section \ref{sec::qgross_tucker}}. The paper ends with some examples in \textbf{Section \ref{sec::examples}}.

\subsection{Acknowledgments}

This work is part of the international project ``Graph Algebras'' supported by EU framework HORIZON-MSCA-2021-SE-01, grant number 101086394. The international project was also co-financed by the Polish Ministry of Science and Higher Education through the grant W24/HE/2023 (Mariusz Tobolski).
The authors would like to thank the organizers of  the 20th Workshop on ``Noncommutative Probability, Operator Algebras and Related Topics, with Applications'' held at the Mathematical Research and Conference Center in B\k{e}dlewo, Poland, in July 2025 where the work on this project started. BS also acknowledges the SFB-TRR 195 and thanks his PhD supervisor Moritz Weber for guidance and support. This work is part of the PhD thesis of BS at Saarland University. BS also thanks Wroc\l aw university for their hospitality, and MT would like to thank Saarland University for great hospitality and working conditions.

\section{Preliminaries}
\label{sec::preliminaries}

Generally, $B$ denotes a C*-algebra and $\psi$ denotes a functional on it. If $B$ is equipped with a faithful positive functional $\psi$, this pair gives rise to a Hilbert space $L^2(B)$ via the GNS-construction. In particular, if $B$ is finite-dimensional, then $L^2(B) = B$ as a set and the inner product is given by $\langle a, b \rangle_\psi = \psi(a^\ast b)$. In our case, $B$ will usually be finite-dimensional, and it will be clear which functional it is equipped with. Then we do not write $L^2(B)$ but simply regard $B$ as a Hilbert space. We denote the C*-algebras of complex $n \times n$ matrices by $M_n$.

Further, if $\hat{G}$ is the dual of a finite abelian group $G$ and $\hat{\alpha}: \hat{G} \to \mathrm{Aut}(\tilde{B})$ is an action of $\hat{G}$ on a C*-algebra $\tilde{B}$, then we denote by $\tilde{B} \rtimes_{\hat{\alpha}} \hat{G}$ the crossed product C*-algebra. We will consistently write $\tilde{B}$ for the smaller C*-algebra on which $\hat{G}$ acts, and we write $B$ for the crossed product C*-algebra. Accordingly, a functional on $\tilde{B}$ is usually denoted by $\tilde{\psi}$, and a functional on $B$ is denoted by $\psi$. Sometimes, we also write $\psi_\rtimes$ for $\psi$ to emphasize that it is defined on a crossed product C*-algebra. The unit of a group is always denoted by $1$ where the group is clear from the context. If $\chi, \xi$ are two elements of the dual group, then $\delta_{\chi=\xi}$ is the Kronecker delta.
If $u \in B$ is a unitary in a C*-algebra, then we denote by $\mathrm{Ad}(u)$ the automorphism of $B$ given by $b \mapsto u b u^\ast$ for all $b \in B$.

\subsection{Quantum sets}

Quantum sets have been introduced by Musto, Reutter and Verdon in \cite{musto_compositional_2018} together with an extensive categorical framework encompassing quantum functions and quantum graphs. In short, quantum sets follow the spirit of Gelfand duality where a classical finite set $X$ is identified with the commutative C*-algebra $C(X)$. Generalizing this, any finite-dimensional C*-algebra $B$ gives rise to a quantum set. However, for a number of reasons, it is important to also equip the C*-algebra $B$ with a particular functional $\psi$. Then one arrives at the following definition which is essentially due to Musto, Reutter and Verdon \cite{musto_compositional_2018}.

\begin{definition}
    A quantum set $(B, \psi)$ is a pair consisting of a finite-dimensional C*-algebra $B$ and a faithful positive functional $\psi: B \to \C$ such that
    \begin{align*}
        m m^\ast = \mathrm{Id}_B
    \end{align*}
    holds for the multiplication map $m: B \otimes B \to B$ and its adjoint $m^\ast$ given by $\langle m(a \otimes b), c \rangle_\psi = \langle a \otimes b, m^\ast(c) \rangle_{\psi \otimes \psi}$ for all $a, b, c \in B$.
    We call any functional with this property a \emph{quantum set functional} on $B$.
\end{definition}

We think of $B$ as a C*-algebra that is at the same time a Hilbert space with the inner product $\langle \cdot, \cdot \rangle_{\psi}$ induced by the functional $\psi$. Maps involving $B$ are generally considered as linear maps with additional properties; thus, we will generally denote the concatenation of two $\ast$-homomorphisms $\varphi_1, \varphi_2: B \to B$ by $\varphi_1 \varphi_2$ instead of $\varphi_1 \circ \varphi_2$. Elements $b \in B$ are identified with the linear map $\C \to B, \lambda \mapsto \lambda b$, and therefore have an adjoint denoted by $b^\dagger$. This adjoint is given by 
$$
    b^\dagger: B \to \C,\qquad c \mapsto \langle b, c \rangle_\psi.
$$

\begin{remark}
    An alternative and widely-used definition of a quantum set asks that the functional $\psi$ be a $\delta$-form for some $\delta > 0$ which means that
    \begin{align*}
        m^\ast m = \delta^2 \mathrm{Id}_{B \otimes B}
    \end{align*}
    holds. The difference between this condition and the above definition is merely a scalar factor: Every quantum set functional is a scaled $\delta$-form and vice versa. See \cite{wasilewski_quantum_2024} for more details.
\end{remark}

The functional $\psi$ is called the \emph{counit} and $m^\ast$ is called the \emph{comultiplication} on $B$. We collect some of their basic properties below which are well-known, see e.g. \cite{musto_compositional_2018}.

\begin{proposition}
    \label{pre::prop:properties_of_counit_and_comultiplication}
    The maps $\psi$ and $m^\ast$ have the following properties:
    \begin{enumerate}
        \item The adjoint $\psi^\ast$ is the unit in the sense that $\psi^\ast: \C \to B, \lambda \mapsto \lambda 1_B$.
        \item The comultiplication is co-associative, i.e.
        \begin{align*}
            (\mathrm{Id}_B \otimes m^\ast)m^\ast = (m^\ast \otimes \mathrm{Id}_B)m^\ast.
        \end{align*}
        \item The comultiplication satisfies
        \begin{align*}
            (\psi \otimes \mathrm{Id}_B)m^\ast = \mathrm{Id}_B = (\mathrm{Id}_B \otimes \psi)m^\ast.
        \end{align*}
    \end{enumerate}
\end{proposition}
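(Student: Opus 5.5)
Each of the three properties follows from unwinding the definition of the adjoint with respect to $\langle\cdot,\cdot\rangle_\psi$ and $\langle\cdot,\cdot\rangle_{\psi\otimes\psi}$, together with associativity and unitality of $m$. Faithfulness of $\psi$ makes these inner products nondegenerate, so two maps agree as soon as they have the same matrix coefficients.

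\emph{(a)} Regard $\psi$ as a linear map $B\to\C$, with $\C$ carrying its standard inner product. For $\lambda\in\C$ and $b\in B$ we compute
\[
\langle \lambda 1_B, b\rangle_\psi = \bar\lambda\,\psi(1_B^\ast b) = \bar\lambda\,\psi(b) = \langle \lambda, \psi(b)\rangle_\C .
\]
Hence $\psi^\ast(\lambda)=\lambda 1_B$.

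\emph{(b)} Associativity of $m$ reads
\[
m(m\otimes \mathrm{Id}_B) = m(\mathrm{Id}_B\otimes m)
\]
as maps $B\otimes B\otimes B\to B$. We take adjoints with respect to the product inner products. The identity $(f\otimes g)^\ast=f^\ast\otimes g^\ast$ holds for tensor products of Hilbert spaces, so
\[
(m^\ast\otimes \mathrm{Id}_B)m^\ast = (\mathrm{Id}_B\otimes m^\ast)m^\ast,
\]
which is co-associativity.

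\emph{(c)} Unitality of $m$ gives
\[
m(\psi^\ast\otimes \mathrm{Id}_B) = \mathrm{Id}_B = m(\mathrm{Id}_B\otimes\psi^\ast),
\]
after identifying $\C\otimes B\cong B\cong B\otimes\C$. This is because by (a) the map $\psi^\ast\otimes\mathrm{Id}_B$ sends $b\mapsto 1_B\otimes b$, and $m(1_B\otimes b)=b$. Taking adjoints yields $(\psi\otimes\mathrm{Id}_B)m^\ast=\mathrm{Id}_B$, and the other side is symmetric.

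None of these steps uses the quantum set condition $mm^\ast=\mathrm{Id}_B$; only faithfulness of $\psi$ (so that the adjoints exist and are unique) is needed. The only point requiring care, rather than a genuine obstacle, is consistency of the identifications: $\C\otimes B\cong B$ must be unitary for the product inner product, which holds because $\C$ carries $\langle\lambda,\mu\rangle=\bar\lambda\mu$. With that fixed, each step is a one-line adjoint computation.
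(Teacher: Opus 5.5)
Your proof is correct. The paper gives no argument of its own here: it records the three properties as well known and refers to Musto--Reutter--Verdon, so your computation is the standard verification that the citation stands in for.

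In your argument, (a) is the direct check $\langle \lambda 1_B, b\rangle_\psi = \bar\lambda\,\psi(b)$. Parts (b) and (c) are the Hilbert-space adjoints of associativity and unitality of $m$. This is legitimate because the inner product induced by $\psi\otimes\psi$ is exactly the tensor-product inner product $\langle a\otimes b, c\otimes d\rangle = \psi(a^\ast c)\,\psi(b^\ast d)$. Consequently $(f\otimes g)^\ast = f^\ast\otimes g^\ast$, and $\C\otimes B\cong B$ is unitary.

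Your remark that the normalization $mm^\ast=\mathrm{Id}_B$ is never used is also accurate. These identities hold for the comultiplication coming from any faithful positive functional, and the quantum-set condition is a separate, additional requirement.
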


\begin{example}
    \label{pre::ex:examples_of_quantum_sets}
    \begin{enumerate}
        \item 
            Every classical finite set $X$ gives rise to a quantum set $(C(X), \psi_X)$, where $\psi_X: C(X) \to \C$ is the functional given by $\psi_X(f) = \sum_{x \in X} f(x)$ for all $f \in C(X)$. Indeed, in this case $m^\ast(e_x) = e_x \otimes e_x$ holds for all $x \in X$, and thus $m m^\ast(e_x) = m(e_x \otimes e_x) = e_x$. Here $e_x$ denotes the indicator function at $x \in X$.
        \item For every matrix algebra $M_n$ with $n \in \N$, a quantum set is obtained by equipping $M_n$ with the functional $\psi := n \mathrm{Tr}$, where $\mathrm{Tr}$ is the un-normalized trace on $M_n$. In fact, this is the unique \emph{tracial} quantum set functional on $M_n$.
    \end{enumerate}
\end{example}

\subsection{Quantum graphs}

A simple graph on a classical finite set $\{1, \dots, n\}$ is given by its adjacency matrix $A \in \{0,1\}^{n \times n}$. One can interpret the adjacency matrix as a linear map $A: \C^n \to \C^n$ with the special property that it is invariant under entrywise multiplication with itself. This can be generalized to quantum sets as follows.

\begin{definition}
    \begin{enumerate}
        \item A \emph{quantum adjacency matrix} on a quantum set $(B, \psi)$ is a linear map $A: B \to B$ which is Schur-idempotent and $\ast$-preserving, by which we mean that it satisfies
        \begin{align*}
            m(A \otimes A)m^\ast = A 
            \quad \text{ and } \quad
            A(b^\ast) = A(b)^\ast \quad \text{ for all } b \in B.
        \end{align*}

        \item A \emph{quantum graph} on $(B, \psi)$ is given by a quantum adjacency matrix on this quantum set. 
    \end{enumerate}
\end{definition}

This definition is essentially due to Musto, Reutter and Verdon \cite{musto_compositional_2018}. By Matsuda \cite{matsuda_classification_2022} the second condition, $A(b^\ast) = A(b)^\ast$, is equivalent to asking that $A$ is completely positive. For a closer discussion of quantum graphs see e.g. \cite{daws_quantum_2024, wasilewski_quantum_2024}.

Throughout this article, we will identify quantum graphs and quantum adjacency matrices, i.e. we will call a matrix $A: B \to B$ with the necessary properties a quantum adjacency matrix as well as a quantum graph.

\begin{remark}
    Let us remark that quantum graphs can also be defined in different ways. First, a classical graph is given by a subset of $V \times V$, i.e. by its edge set. In a quantum generalization, one says that a quantum graph is given by a projection $P$ in $B \otimes B^{\mathrm{op}}$ where the latter is the opposite C*algebra. Further, a classical graph can be described by the space $\mathrm{span}(E_{ij} : i \text{ is adjacent to } j) \subset M_n$, and this can be generalized to quantum sets as well. Crucially, these different definitions of quantum graphs are equivalent, see e.g. \cite{musto_compositional_2018, daws_quantum_2024,wasilewski_quantum_2024}. This shows that the above definition is indeed sensible.
\end{remark}

\begin{example}
    \label{pre::ex:classical_graphs_as_quantum_graphs}
    For all $n \in \N$ the pair $(\C^n, \psi_n)$ with $\psi_n(e_i) = 1$ for all $i \leq n$ is a quantum set, see Example \ref{pre::ex:examples_of_quantum_sets}. In this case, the operation $m(\cdot \otimes \cdot)m^\ast$ produces exactly the entrywise product of two matrices. Thus, a matrix $A \in M_n$ is the classical adjacency matrix of a (simple, directed) graph on $n$ vertices if and only if $A$ is a quantum adjacency matrix on the quantum set $(\C^n, \psi_n)$. 
    
    We will use this identification later on to show that our definitions and theorems align with the classical theory of voltage graphs and derived graphs. Occasionally, we also write $(\C^V, \psi_V)$ for the quantum set corresponding to a finite set $V$.
\end{example}

\subsection{Definition of quantum isomorphisms}

Let us recall isomorphisms and quantum isomorphisms between quantum sets and quantum graphs. We mainly follow the theory of Musto, Reutter and Verdon \cite{musto_compositional_2018} with the difference that we do not take a categorical perspective on quantum sets and quantum graphs but work with explicit C*-algebras. 

\begin{definition}
    \label{pre::def:qisomoprhisms_of_qsets_and_qgraphs}
    Let $(B_1, \psi_1)$ and $(B_2, \psi_2)$ be quantum sets and let $A_1: B_1 \to B_1$ and $A_2: B_2 \to B_2$ be quantum graphs on $(B_1, \psi_1)$ and $(B_2, \psi_2)$, respectively.
    \begin{enumerate}
        \item The two quantum sets are \emph{isomorphic} if there is a $*$-isomorphism $\varphi: B_1 \to B_2$ such that $\psi_2 \varphi = \psi_1$. We denote by $\mathrm{Aut}(B, \psi)$ the group of quantum set automorphisms of a quantum set $(B, \psi)$.
        \item The two quantum graphs are \emph{isomorphic} if there is an isomorphism $\varphi: B_1 \to B_2$ of the underlying quantum sets such that $\varphi A_1 = A_2 \varphi$. We denote by $\mathrm{Aut}(A)$ the group of quantum graph automorphisms of a quantum graph $A$.
        \item The two quantum sets are \emph{quantum isomorphic}, written $(B_1, \psi_1) \cong_q (B_2, \psi_2)$, if there is a finite-dimensional Hilbert space $\mathcal{H}$ and a unital $\ast$-homomorphism  
        \begin{align*}
            \rho: B_1 \to B_2 \otimes B(\mathcal{H}),
        \end{align*}
        such that the map
        \begin{align*}
            p: L^2(B_1) \otimes \mathcal{H} \to L^2(B_2) \otimes \mathcal{H}, \quad a \otimes \xi \mapsto \rho(a)(1 \otimes \xi)
        \end{align*}
        is unitary as an operator between Hilbert spaces.
        \item The two quantum graphs are \emph{quantum isomorphic}, written $A_1 \cong_q A_2$, if there is a quantum isomorphism $\rho: B_1 \to B_2 \otimes B(\mathcal{H})$ between the underlying quantum sets such that
        \begin{align*}
            \rho A_1 = (A_2 \otimes \mathrm{Id}_{B(\mathcal{H})}) \rho
        \end{align*}
        holds.
    \end{enumerate}
\end{definition}

\begin{remark}
    The previous definition is a rephrased version of the definition given by Musto, Reutter and Verdon in \cite{musto_compositional_2018}. They define a quantum function from $(B_1, \psi_1)$ to $(B_2, \psi_2)$ as a linear map
    \begin{align*}
        P: \mathcal{H} \otimes B_1 \to B_2 \otimes \mathcal{H},
    \end{align*}
    which satisfies certain properties. Their map $P$ corresponds to our map $p$ after swapping the tensor legs in the domain, and their properties on $P$ correspond to asking that the associated map $\rho$ is a $\ast$-homomorphism.

    Moreover, Musto, Reutter and Verdon have additional properties which $P$ must satisfy in order to be called \emph{bijective}. In \cite[Theorem 4.8]{musto_compositional_2018} several equivalent characterizations of bijective quantum functions are given. One characterization asks that the map $P$ is unitary, and that is the property that we use in our definition of quantum isomorphisms.

    Let us finally mention that the above formulation of quantum isomorphisms follows mainly Brannan et al. \cite{brannan_bigalois_2020} with the only difference that we adopt a basis-free perspective.
\end{remark}

\begin{notation}
    \label{pre::notation:quantum_functions}
    It is convenient to treat a quantum isomorphism $\rho: B_1 \to B_2 \otimes B(\mathcal{H})$ as if it were a map from $B_1$ to $B_2$. For that we use the following notational conventions.
    \begin{enumerate}
        \item If $T: B_2 \to B_3$ is a linear map, then we write
        \begin{align*}
            T \rho
            \text{ for the map }
            (T \otimes \mathrm{Id}_{\mathcal{H}}) \rho: B_1 \to B_3 \otimes B(\mathcal{H}).
        \end{align*}
        An equation of the form 
        \begin{align*}
            T \rho = S
        \end{align*}
        for some $S: B_1 \to B_3$ is, accordingly, to be understood as
        \begin{align*}
            (T \otimes \mathrm{Id}_{\mathcal{H}}) \rho = S \otimes \mathrm{Id}_{\mathcal{H}}.
        \end{align*}
        \item Further, we understand the tensor product $\rho \otimes \rho$ in a special way. Indeed, for any $x, y \in B_1$ the elements $\rho(x)$ and $\rho(y)$ can be interpreted as matrices with values in $B_2$ since $\mathcal{H}$ is finite-dimensional. Let us embed the entries of $\rho(x)$ in the left tensor leg of $B_2 \otimes B_2$, and the entries of $\rho(y)$ in the right tensor leg of $B_2 \otimes B_2$. Then, $\rho(x)$ and $\rho(y)$ are two matrices with values in $B_2 \otimes B_2$. These two matrices can be multiplied according to the usual matrix multiplication rules. Then we obtain a new matrix of the same size as before with entries in $B_2 \otimes B_2$. It is this matrix that we denote by $(\rho \otimes \rho)(x \otimes y)$. More precisely, we write
        \begin{align*}
            \rho \otimes \rho 
            \text{ for the map }
            (\mathrm{Id}_{B_2} \otimes \mathrm{Id}_{B_2} \otimes m_{B(\mathcal{H})}) \Sigma_{1, 3, 2, 4} (\rho \otimes \rho),
        \end{align*}
        where $\Sigma: B_2 \otimes B(\mathcal{H}) \otimes B_2 \otimes B(\mathcal{H}) \to B_2 \otimes B_2 \otimes B(\mathcal{H}) \otimes B(\mathcal{H})$ swaps the two tensor factors in the middle and $m_{B(\mathcal{H})}: B(\mathcal{H}) \otimes B(\mathcal{H}) \to B(\mathcal{H})$ is the multiplication map on $B(\mathcal{H})$.
        \item Accordingly, if $T: B_2 \to B_3$ and $S: B_2 \to B_4$ are linear maps, then we write
        \begin{align*}
            &(S \otimes T)(\rho \otimes \rho)
            \text{ for the map } \\
            &\qquad (\mathrm{Id}_{B_3} \otimes \mathrm{Id}_{B_4} \otimes m_{B(\mathcal{H})}) (S \otimes T \otimes \mathrm{Id}_{\mathcal{H} \otimes \mathcal{H}}) \Sigma_{1, 3, 2, 4} (\rho \otimes \rho).
        \end{align*}
    \end{enumerate}
    Let us stress, that these notational conventions are in accordance with the theory of Musto, Reutter and Verdon for computing with quantum functions and quantum isomorphisms.
\end{notation}

The following proposition collects some elementary properties of quantum isomorphisms. In fact, these properties can be used for defining quantum isomorphisms as well, see \cite[Theorem 4.8]{musto_compositional_2018}.

\begin{proposition}
    \label{pre::prop:properties_of_qisomorphisms}
    Let $\rho: B_1 \to B_2 \otimes B(\mathcal{H})$ be a quantum isomorphism between the quantum sets $(B_1, \psi_1)$ and $(B_2, \psi_2)$. Then the following properties hold:
    \begin{enumerate}
        \item $m_2(\rho \otimes \rho) = \rho m_1$,
        \item $\psi_2 \rho = \psi_1$,
        \item $(\rho \otimes \rho) m_1^\ast = m_2^\ast \rho$,
        \item $\rho \psi_1^\ast = \psi_2^\ast$,
    \end{enumerate}
    where $m_1$ ($m_2$) is the multiplication map on $B_1$ ($B_2$).
\end{proposition}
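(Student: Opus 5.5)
We write $\mathcal{H}$ for the Hilbert space of the quantum isomorphism and identify $\rho(a)$ with the operator $L_{\rho(a)}$ of left multiplication on $L^2(B_2)\otimes\mathcal{H}$. The starting point is the defining relation of $p$: $p(a\otimes\xi)=\rho(a)(1\otimes\xi)$. Since $\rho$ is a unital $\ast$-homomorphism, $p(ab\otimes\xi)=\rho(a)\rho(b)(1\otimes\xi)=\rho(a)\,p(b\otimes\xi)$. This gives the intertwining relation
\begin{align*}
    p\,(L_a\otimes \mathrm{Id}_{\mathcal{H}}) = \rho(a)\, p \qquad \text{for all } a\in B_1 .
\end{align*}
The four properties then come in dual pairs. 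Property (a) is a restatement of multiplicativity of $\rho$ in the conventions of Notation \ref{pre::notation:quantum_functions}. Property (d) is unitality of $\rho$. Properties (b) and (c) are the adjoint versions, and unitarity of $p$ is needed to pass between them.

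\textbf{Property (a).} For $x,y\in B_1$, the matrix product defining $(\rho\otimes\rho)(x\otimes y)$, followed by $m_2$, is exactly the matrix product $\rho(x)\rho(y)$ in $B_2\otimes B(\mathcal{H})$. This equals $\rho(xy)=\rho m_1(x\otimes y)$, so (a) is an unwinding of the notation.

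\textbf{Property (d).} We have $\rho\psi_1^\ast(\lambda)=\lambda\rho(1)=\lambda 1$. By Proposition \ref{pre::prop:properties_of_counit_and_comultiplication}(a), this is $\psi_2^\ast(\lambda)\otimes\mathrm{Id}_{\mathcal{H}}$.

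\textbf{Property (b).} We must show $(\psi_2\otimes \mathrm{Id})\rho(a)=\psi_1(a)\mathrm{Id}_{\mathcal H}$. Pair vectors $1\otimes\xi$ and $1\otimes\eta$:
\begin{align*}
    \langle 1\otimes\eta, p(a\otimes\xi)\rangle = \langle 1\otimes \eta, \rho(a)(1\otimes\xi)\rangle = \langle \eta, (\psi_2\otimes\mathrm{Id})\rho(a)\,\xi\rangle .
\end{align*}
Since $p(1\otimes\eta)=1\otimes\eta$ and $p$ is unitary, the left side equals $\langle 1\otimes\eta, a\otimes\xi\rangle=\psi_1(a)\langle\eta,\xi\rangle$. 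This yields (b).

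\textbf{Property (c).} Take adjoints in (a): $(\rho m_1)^\ast = m_1^\ast\rho^\ast$ and $(m_2(\rho\otimes\rho))^\ast=(\rho\otimes\rho)^\ast m_2^\ast$. The question is what the adjoint of $\rho$ is. Unitarity of $p$ combined with the intertwining relation gives
\begin{align*}
    p\,(L_a^\ast\otimes\mathrm{Id}) = \rho(a)^\ast p = \rho(a^\ast)p ,
\end{align*}
so $\rho$ is $\ast$-compatible with respect to the GNS inner products. From this, together with the tracial-type identity (b), the Hilbert-space adjoint $\rho^\dagger$ can be identified, suitably interpreted in $\mathcal{H}$, with an inverse of $\rho$.

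This adjoint identification is the main obstacle. I would attack it in the string-diagram calculus of Musto--Reutter--Verdon. Writing $\rho$ in terms of $p$ via $\rho(a)=p(L_a\otimes\mathrm{Id})p^\ast$, one gets
\begin{align*}
    m_2^\ast\rho(a) = m_2^\ast L_{\rho(a)}(1\otimes\cdot).
\end{align*}
Then use that $m^\ast$ is a $B$-bimodule map (a Frobenius property, derivable from $mm^\ast=\mathrm{Id}$ together with Proposition \ref{pre::prop:properties_of_counit_and_comultiplication}) to move the multiplication through. Concretely, $m_2^\ast(\rho(a)\cdot 1)=\rho(a)\cdot m_2^\ast(1)$. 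Substituting the Frobenius expansion of $m_1^\ast(a)=(a\otimes 1)m_1^\ast(1)$ and applying (a), this reduces (c) to the single identity
\begin{align*}
    (\rho\otimes\rho)m_1^\ast(1)=m_2^\ast(1).
\end{align*}
That identity expresses that $p$ preserves the canonical "identity" vector. It follows from unitarity of $p$ by evaluating against the Frobenius structure, in a computation analogous to (b) but done in two tensor legs.
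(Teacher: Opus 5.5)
Your route differs from the paper's. Apart from (a), the paper proves nothing by hand: it cites Brannan et al.\ (Lemma 4.2) for (b) and the Musto--Reutter--Verdon characterization of bijective quantum functions (Theorem 4.8) for (c) and (d). Your direct arguments for (a), (b) and (d) are correct. Your reduction of (c) is also correct. The left-module property $m^\ast(xy)=(x\otimes 1)m^\ast(y)$ holds for any faithful $\psi$, so $mm^\ast=\mathrm{Id}$ is not needed for it. Combine it with $(\rho\otimes\rho)\bigl((x\otimes 1)z\bigr)=\rho(x)_{13}\,(\rho\otimes\rho)(z)$, where $\rho(x)$ sits in the first $B_2$-leg and the $B(\mathcal H)$-leg. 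This holds because $x$ acts on the first leg from the left, even though $\rho\otimes\rho$ is not multiplicative in general. Together these show that (c) is equivalent to $(\rho\otimes\rho)m_1^\ast(1)=m_2^\ast(1)\otimes\mathrm{Id}_{\mathcal H}$.

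\textbf{The gap.} That identity carries the entire content of (c), and you only assert it. Moreover, your hint ``analogous to (b)'' points at the wrong ingredient. Your proof of (b) uses only $p^\ast p=\mathrm{Id}$; indeed (b) is equivalent to $p$ being isometric. The identity, by contrast, is equivalent (for a unital $\ast$-homomorphism $\rho$) to the \emph{co-isometry} $pp^\ast=\mathrm{Id}$, and it also uses that $\rho$ is $\ast$-preserving.

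\textbf{The fix.} Choose orthonormal bases $(f_i)$ of $L^2(B_1)$ and $(g_j)$ of $L^2(B_2)$. Pairing with $f_a\otimes f_b$ gives $m_1^\ast(1)=\sum_i f_i\otimes f_i^\ast$, and likewise $m_2^\ast(1)=\sum_j g_j\otimes g_j^\ast$. Write $\rho(f_i)=\sum_j g_j\otimes P_{ji}$ with $P_{ji}\in B(\mathcal H)$. Then $p(f_i\otimes\xi)=\sum_j g_j\otimes P_{ji}\xi$, so $(P_{ji})$ is the block matrix of $p$, and $pp^\ast=\mathrm{Id}$ reads $\sum_i P_{ji}P_{li}^\ast=\delta_{jl}\mathrm{Id}_{\mathcal H}$. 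Since $\rho(f_i^\ast)=\rho(f_i)^\ast=\sum_l g_l^\ast\otimes P_{li}^\ast$, we get
\begin{align*}
    (\rho\otimes\rho)m_1^\ast(1)
    =\sum_{j,l} g_j\otimes g_l^\ast\otimes\sum_i P_{ji}P_{li}^\ast
    =\sum_j g_j\otimes g_j^\ast\otimes\mathrm{Id}_{\mathcal H}
    =m_2^\ast(1)\otimes\mathrm{Id}_{\mathcal H}.
\end{align*}
With this inserted, your proof is complete and self-contained. It also makes visible something the citation hides: the two halves of unitarity of $p$ correspond exactly to (b) and (c).

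Drop the detour about identifying $\rho^\dagger$ with an inverse of $\rho$. It is unjustified and plays no role, and the relation $p(L_{a^\ast}\otimes\mathrm{Id})=\rho(a^\ast)p$ you derive there does not use unitarity at all.
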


\begin{proof}
    The first equation says that $\rho$ is multiplicative, and this must be true since $\rho$ is a $\ast$-homomorphism. The second equation follows from the fact that $p$ is unitary, see \cite[Lemma 4.2]{brannan_bigalois_2020}. The last two equations are essentially the properties that Musto, Reutter and Verdon require for a quantum function to be bijective, see \cite[Theorem 4.8]{musto_compositional_2018}.
\end{proof}

\subsection{Crossed products}

We refer the reader to Blackadar's book \cite{blackadar_operator_2006} for a general reference on crossed product C*-algebras. If $\tilde{B}$ is a finite-dimensional C*-algebra and $\hat{\alpha}: \hat{G} \to \mathrm{Aut}(\tilde{B})$ is a group action of a finite group on $B$, then the crossed product $B \rtimes_{\hat{\alpha}} \hat{G}$ is again a finite-dimensional C*-algebra. This is the situation which we will encounter throughout the rest of this article. The fact that we always consider actions of duals of finite abelian groups is due to notational cosmetics for later results. Indeed, any finite abelian group is the dual of another finite abelian group.
We will consistently use the following notation.

\begin{notation}
    \label{pre::notation:crossed_product}
    Let $(\tilde{B}, \tilde{\psi})$ be a quantum set and let $\hat{\alpha}: \hat{G} \to \mathrm{Aut}(\tilde{B}, \tilde{\psi})$ be an action of the dual of a finite abelian group $G$. Further, let
    \begin{align*}
        B := \tilde{B} \rtimes_{\hat{\alpha}} \hat{G}
    \end{align*}
    be the associated crossed product C*-algebra. There is a representation 
    $$
    (u_\chi)_{\chi \in \hat{G}} \subset B
    \quad \text{ with } \quad
    \hat{\alpha}_\chi(b) = u_\chi b u_\chi^\ast
    \text{ for all } b \in \tilde{B}, \chi \in \hat{G},
    $$ 
    such that every element of $B$ can be written uniquely as
    \begin{align*}
        b = \sum_{\chi \in \hat{G}} b_\chi u_\chi
        \qquad \text{ for suitable } b_\chi \in \tilde{B}.
    \end{align*}
    Further, multiplication and involution on $B$ are determined by
    \begin{align*}
        (b_\chi u_\chi) (c_\xi u_\xi) &= b_\chi \hat{\alpha}_\chi(c_\xi) u_{\chi \xi}, \\
        (b_\chi u_\chi)^\ast &= \hat{\alpha}_{\chi^{-1}}(b_\chi^\ast) u_{\chi^{-1}}.
    \end{align*}
    Finally, there is a linear map $E_\rtimes: B \to \tilde{B}$ given by
    \begin{align*}
        E_\rtimes\left( \sum_{\chi \in \hat{G}} b_\chi u_\chi \right) = n b_1,
    \end{align*}
    where $n := |G| = |\hat{G}|$. This is a scaled version of the natural conditional expectation from $B$ onto $\tilde{B}$. The scaling is chosen so that it fits with the quantum set functionals that we will associate with $B$ and $\tilde{B}$.
\end{notation}

\section{Crossed product quantum sets}
\label{sec::crossed_prod_qsets}

Assume we have given a finite-dimensional C*-algebra $\tilde{B}$ together with a dual group action $\hat{\alpha}: \hat{G} \to \mathrm{Aut}(\tilde{B})$ where $G$ is a finite abelian group. The crossed product construction is an operation
\begin{align*}
    \tilde{B} \mapsto B := \tilde{B} \rtimes_{\hat{\alpha}} \hat{G},
\end{align*}
which produces a new C*-algebra $B = \tilde{B} \rtimes_{\hat{\alpha}} \hat{G}$, which is called the \emph{crossed product C*-algebra}, from the pair $(B, \hat{\alpha})$.

What happens if $\tilde{B}$ is equipped with a functional $\tilde{\psi}$ such that $(\tilde{B}, \tilde{\psi})$ is a quantum set? In this section, we discuss how one can construct a functional $\psi_\rtimes$ on the crossed product C*-algebra $\tilde{B} \rtimes_{\hat{\alpha}} \hat{G}$ such that $(\tilde{B} \rtimes_{\hat{\alpha}} \hat{G}, \psi_\rtimes)$ is again a quantum set. This gives us an operation
\begin{align*}
    (\tilde{B}, \tilde{\psi}) \mapsto (\tilde{B}, \tilde{\psi}) \rtimes_{\hat{\alpha}} \hat{G} =: (B, \psi_\rtimes),
\end{align*}
which produces a new quantum set $(B, \psi_\rtimes)$, which we call the \emph{crossed product quantum set}, from a quantum set $(\tilde{B}, \tilde{\psi})$ together with a dual group action $\hat{\alpha}$.

\begin{definition}
    \label{crossed::def:crossed_product_quantum_set}
    Let $(\tilde{B}, \tilde{\psi})$ be a quantum set and let $\hat{\alpha}: \hat{G} \to \mathrm{Aut}(\tilde{B}, \tilde{\psi})$ be an action of the dual of a finite abelian group $G$. The \emph{crossed product quantum set} is defined as
    \begin{align*}
        (\tilde{B}, \tilde{\psi}) \rtimes_{\hat{\alpha}} \hat{G} := \left( \tilde{B} \rtimes_{\hat{\alpha}} \hat{G}, \psi_\rtimes \right),
    \end{align*}
    where $\tilde{B} \rtimes_{\hat{\alpha}} \hat{G}$ is the crossed product C*-algebra and $\psi_\rtimes$ is the functional given by
    \begin{align*}
        \psi_\rtimes\left( \sum_{\chi \in \hat{G}} b_\chi u_\chi \right) = n \tilde{\psi}(b_1)
    \end{align*}
    with $n := |G| =|\hat{G}|$.
\end{definition}

We will proceed to check that the above definition makes sense, i.e. we verify that $\psi_\rtimes$ is indeed a quantum set functional on the C*-algebra $\tilde{B} \rtimes_{\hat{\alpha}} \hat{G}$. Before we prove a useful lemma, let us introduce some notation.

\begin{notation}
    \label{pre::notation:crossed_product_quantum_sets}
    Given a quantum set $(\tilde{B}, \tilde{\psi})$ and an action $\hat{\alpha}: \hat{G} \to \mathrm{Aut}(\tilde{B}, \tilde{\psi})$ as above, we denote
    \begin{itemize}
        \item by $\tilde{m}$ the multiplication map on $\tilde{B}$ and by $\tilde{m}^\ast$ its adjoint with respect to the inner product induced by $\tilde{\psi}$,
        \item by $m_\rtimes$ the multiplication map on $\tilde{B} \rtimes_{\hat{\alpha}} \hat{G}$ and by $m_\rtimes^\ast$ its adjoint with respect to the inner product induced by $\psi_\rtimes$,
        \item by $E_\rtimes$ the linear map
        \begin{align*}
            E_\rtimes: \tilde{B} \rtimes_{\hat{\alpha}} \hat{G} \to \tilde{B}, \quad \sum_{\chi \in \hat{G}} b_\chi u_\chi \mapsto n b_1,
        \end{align*}
        and by $E_\rtimes^\ast$ its adjoint with respect to the inner products induced by the functionals $\tilde{\psi}$ and $\psi_\rtimes$ as above.
    \end{itemize}
\end{notation}

\begin{lemma}
    \label{land::lemma:description_of_crossed_product}
    Let $\hat{\alpha}$ be an action of $\hat{G}$ on the quantum set $(\tilde{B}, \tilde{\psi})$ as above. Then the following formulas are true for all $b, c \in \tilde{B}$ and $\chi, \xi \in \hat{G}$:
    \begin{align}
        \label{land::eq:formula_for_scalar_product_on_crossed_product_algebra}
        \left\langle b u_\chi, c u_\xi \right\rangle_{\psi_\rtimes} 
        &= n \delta_{\chi=\xi} \langle b, c \rangle_{\tilde{\psi}}, \\
        \label{land::eq:comultiplication_formula_on_crossed_product}
        m_\rtimes^\ast\left(b u_\chi\right) &= \frac{1}{n} \sum_{\xi \in \hat{G}} \sum_i b_i^{(1)} u_\xi \otimes \hat{\alpha}_{\xi^{-1}}(b_i^{(2)}) u_{\xi^{-1} \chi}, \\
        \label{eq:formula_for_E*}
        E_\rtimes^\ast(b) &= b,
    \end{align}
    where in \eqref{land::eq:comultiplication_formula_on_crossed_product} the elements $b_i^{(1)}, b_i^{(2)} \in \tilde{B}$ are such that $\tilde{m}^\ast(b) = \sum_i b_i^{(1)} \otimes b_i^{(2)}$.
\end{lemma}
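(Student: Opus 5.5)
The three formulas will be verified in order, each directly from the definitions in Notation~\ref{pre::notation:crossed_product} and Definition~\ref{crossed::def:crossed_product_quantum_set}. The only structural input is that each $\hat{\alpha}_\chi$ preserves $\tilde{\psi}$, hence is unitary for $\langle\cdot,\cdot\rangle_{\tilde\psi}$. Being a $\tilde\psi$-preserving $\ast$-automorphism, it also intertwines the comultiplication: $(\hat{\alpha}_\chi\otimes\hat{\alpha}_\chi)\tilde m^\ast=\tilde m^\ast\hat{\alpha}_\chi$.

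\textbf{Formula \eqref{land::eq:formula_for_scalar_product_on_crossed_product_algebra}.} I first compute $(bu_\chi)^\ast(cu_\xi)=\hat{\alpha}_{\chi^{-1}}(b^\ast)u_{\chi^{-1}}cu_\xi=\hat{\alpha}_{\chi^{-1}}(b^\ast c)u_{\chi^{-1}\xi}$. Applying $\psi_\rtimes$ picks out the coefficient of $u_1$. This coefficient is zero unless $\chi=\xi$, in which case $\psi_\rtimes$ gives $n\tilde\psi(\hat{\alpha}_{\chi^{-1}}(b^\ast c))=n\tilde\psi(b^\ast c)$ by invariance.

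\textbf{Formula \eqref{eq:formula_for_E*}.} I check $\langle E_\rtimes(cu_\xi),b\rangle_{\tilde\psi}=\langle cu_\xi,b\rangle_{\psi_\rtimes}$ for all $c,\xi$. The left side is $n\delta_{\xi=1}\langle c,b\rangle_{\tilde\psi}$ by definition of $E_\rtimes$. The right side is the same by \eqref{land::eq:formula_for_scalar_product_on_crossed_product_algebra} with $b=bu_1$, so $E_\rtimes^\ast(b)=bu_1=b$.

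\textbf{Formula \eqref{land::eq:comultiplication_formula_on_crossed_product}.} This is the main computation. By \eqref{land::eq:formula_for_scalar_product_on_crossed_product_algebra}, the family $\{u_\chi\}$ splits $B$ into mutually orthogonal copies of $\tilde B$, each with inner product scaled by $n$. It therefore suffices to show that the proposed right-hand side $X$ satisfies
\begin{align*}
\langle cu_\eta\otimes du_\zeta, X\rangle_{\psi_\rtimes\otimes\psi_\rtimes}=\langle cu_\eta\,du_\zeta, bu_\chi\rangle_{\psi_\rtimes}
\end{align*}
for all $c,d\in\tilde B$ and $\eta,\zeta\in\hat G$.

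For the right side, I use $cu_\eta du_\zeta=c\hat{\alpha}_\eta(d)u_{\eta\zeta}$. It equals $n\delta_{\eta\zeta=\chi}\langle c\hat{\alpha}_\eta(d),b\rangle_{\tilde\psi}=n\delta_{\eta\zeta=\chi}\langle c\otimes\hat{\alpha}_\eta(d),\tilde m^\ast b\rangle$.

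For the left side, orthogonality kills every term of $X$ except $\xi=\eta$, and that term then requires $\zeta=\eta^{-1}\chi$. The surviving term gives $\tfrac1n\cdot n^2\sum_i\langle c,b_i^{(1)}\rangle\langle d,\hat{\alpha}_{\eta^{-1}}(b_i^{(2)})\rangle$. Using unitarity of $\hat{\alpha}_\eta$, I rewrite $\langle d,\hat{\alpha}_{\eta^{-1}}(b_i^{(2)})\rangle=\langle\hat{\alpha}_\eta(d),b_i^{(2)}\rangle$. This matches the right side exactly, including the factor $n$ and the delta.

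The main obstacle is bookkeeping in this last step. The factors of $n$ have to cancel correctly: one $n^2$ from the two inner products against the explicit $\tfrac1n$. The side of the tensor on which $\hat{\alpha}$ acts must also be the right one. If the index placement in the statement (for instance $u_{\xi^{-1}\chi}$ rather than $u_{\xi^{-1}}u_\chi$) hides a twist, I would resolve it by writing $u_{\xi^{-1}\chi}=u_{\xi^{-1}}u_\chi$, which is valid since $\hat G$ is abelian and $u$ is a representation.
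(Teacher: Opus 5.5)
Your proof is correct and follows essentially the same route as the paper. You compute the scalar product from $(bu_\chi)^\ast(cu_\xi)=\hat{\alpha}_{\chi^{-1}}(b^\ast c)u_{\chi^{-1}\xi}$ together with $\hat{\alpha}$-invariance of $\tilde{\psi}$, then verify the formulas for $m_\rtimes^\ast$ and $E_\rtimes^\ast$ by pairing against the spanning elements $cu_\eta\otimes du_\zeta$ (resp.\ $cu_\xi$) and moving $\hat{\alpha}_{\eta^{-1}}$ across the inner product by unitarity; the intertwining identity $(\hat{\alpha}_\chi\otimes\hat{\alpha}_\chi)\tilde{m}^\ast=\tilde{m}^\ast\hat{\alpha}_\chi$ that you announce is true but never actually used.
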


\begin{proof}
    The first statement follows from
    \begin{align*}
        \left\langle b u_\chi, c u_\xi \right\rangle_{\psi_\rtimes} 
        &= \psi_\rtimes(u_\chi^\ast b^\ast c u_\xi)
        = n \delta_{\chi=\xi} \tilde{\psi}(\hat{\alpha}_{\chi^{-1}}(b^\ast c))
        = n \delta_{\chi=\xi} \langle b, c \rangle_{\tilde{\psi}},
    \end{align*}
    where we use $\tilde{\psi} = \tilde{\psi} \circ \hat{\alpha}_\cdot$ in the last step. Using \eqref{land::eq:formula_for_scalar_product_on_crossed_product_algebra}, we get for any elements $b u_\chi, c u_\rho, d u_\sigma \in \tilde{B} \rtimes_{\hat{\alpha}} \hat{G}$,
    \begin{align*}
        \frac{1}{n} \sum_\xi \sum_i& \langle b_i^{(1)} u_\xi \otimes \hat{\alpha}_{\xi^{-1}}(b_i^{(2)}) u_{\xi^{-1} \chi}, c u_\rho \otimes d u_\sigma \rangle_{\psi_\rtimes \otimes \psi_\rtimes} \\
        &= \frac{1}{n} \sum_\xi \sum_i \langle b_i^{(1)} u_\xi, c u_\rho \rangle_{\psi_\rtimes} \, \langle \hat{\alpha}_{\xi^{-1}}(b_i^{(2)}) u_{\xi^{-1} \chi}, d u_\sigma \rangle_{\psi_\rtimes} \\
        &= n \sum_\xi \sum_i \delta_{\xi=\rho} \langle b_i^{(1)}, c \rangle_{\tilde{\psi}} \, \delta_{\xi^{-1}\chi = \sigma} \langle \hat{\alpha}_{\xi^{-1}}(b_i^{(2)}), d \rangle_{\tilde{\psi}}.
    \end{align*}
    As $\tilde{\psi} = \tilde{\psi} \circ \hat{\alpha}_\cdot$ we have
    \begin{align*}
        \langle \hat{\alpha}_{\xi^{-1}}(b_i^{(2)}), d \rangle_{\tilde{\psi}} = \langle b_i^{(2)}, \hat{\alpha}_\xi(d) \rangle_{\tilde{\psi}}.
    \end{align*}
    Hence, we can continue the computation from above with
    \begin{align*}
        \dots 
        &= n \delta_{\chi=\rho \sigma} \sum_i \langle b_i^{(1)}, c \rangle_{\tilde{\psi}} \,\langle b_i^{(2)}, \hat{\alpha}_\rho(d) \rangle_{\tilde{\psi}} \\
        &= n \delta_{\chi=\rho \sigma} \langle m_\rtimes^\ast(b), c \otimes \hat{\alpha}_\rho(d) \rangle_{\tilde{\psi} \otimes \tilde{\psi}} \\
        &= n \delta_{\chi=\rho \sigma} \langle b, \tilde{m}(c \otimes \hat{\alpha}_\rho(d)) \rangle_{\tilde{\psi}} \\
        &= \langle b u_\chi, m_\rtimes(c u_\rho \otimes d u_\sigma) \rangle_{\psi_\rtimes}.
    \end{align*}
    Altogether this shows $m^\ast_\rtimes(b u_\chi) = \frac{1}{n} \sum_{\xi \in \hat{G}} \sum_i b_i^{(1)} u_\xi \otimes \hat{\alpha}_{\xi^{-1}}(b_i^{(2)}) u_{\xi^{-1} \chi}$.

    It remains to prove formula \eqref{eq:formula_for_E*} for the adjoint $E_\rtimes$. For this, let $b, c \in \tilde{B}$ and $\chi \in \hat{G}$ and use \eqref{land::eq:formula_for_scalar_product_on_crossed_product_algebra} to observe
    \begin{align*}
        \left\langle E_\rtimes\left(b u_\chi\right), c \right\rangle_{\tilde{\psi}} 
            = n \delta_{\chi=1} \langle b, c \rangle_{\tilde{\psi}}
            = \langle b u_\chi, c \rangle_{\psi_\rtimes}
            = \langle b u_\chi, E_\rtimes^\ast(c) \rangle_{\psi_\rtimes}.
    \end{align*}
\end{proof}

\begin{proposition}
    \label{crossed::prop:qset_functional_on_crossed_product}
    The map $\psi_\rtimes$ is a quantum set functional on the C*-algebra $\tilde{B} \rtimes_{\hat{\alpha}} \hat{G}$ if and only if $\tilde{\psi}$ is a quantum set functional on $\tilde{B}$.
\end{proposition}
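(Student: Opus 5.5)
The plan is to check the two defining requirements of a quantum set functional one after the other: faithfulness and positivity first, then the identity $mm^\ast=\mathrm{Id}$. Each will be reduced to the corresponding property of $\tilde{\psi}$ by an explicit computation in the crossed product. For both directions I use that $\hat{\alpha}$ takes values in $\mathrm{Aut}(\tilde{B}, \tilde{\psi})$, so $\tilde{\psi}\hat{\alpha}_\chi = \tilde{\psi}$ for all $\chi$.

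\emph{Positivity and faithfulness.} Take $x = \sum_\chi b_\chi u_\chi \in \tilde{B} \rtimes_{\hat{\alpha}} \hat{G}$. I will expand $x^\ast x$ with the multiplication and involution rules of Notation~\ref{pre::notation:crossed_product} and keep only the coefficient of $u_1$. The terms contributing to that coefficient are $u_\chi^\ast b_\chi^\ast b_\chi u_\chi = \hat{\alpha}_{\chi^{-1}}(b_\chi^\ast b_\chi)$, so
\begin{align*}
    \psi_\rtimes(x^\ast x) = n \sum_{\chi} \tilde{\psi}\bigl(\hat{\alpha}_{\chi^{-1}}(b_\chi^\ast b_\chi)\bigr) = n \sum_\chi \tilde{\psi}(b_\chi^\ast b_\chi).
\end{align*}
This is the computation behind \eqref{land::eq:formula_for_scalar_product_on_crossed_product_algebra}, but done without presupposing that $\psi_\rtimes$ is an inner-product functional. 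It shows at once that if $\tilde{\psi}$ is faithful and positive, then so is $\psi_\rtimes$. Conversely, $\psi_\rtimes$ restricted to $\tilde{B} = \tilde{B}u_1$ equals $n\tilde{\psi}$, so faithfulness and positivity of $\psi_\rtimes$ pass back to $\tilde{\psi}$. Hence in either direction both GNS inner products are available, and Lemma~\ref{land::lemma:description_of_crossed_product} applies.

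\emph{The identity $mm^\ast=\mathrm{Id}$.} I will apply $m_\rtimes$ to formula \eqref{land::eq:comultiplication_formula_on_crossed_product}. This gives
\begin{align*}
    m_\rtimes m_\rtimes^\ast(b u_\chi) = \frac{1}{n}\sum_{\xi}\sum_i b_i^{(1)} \hat{\alpha}_\xi\bigl(\hat{\alpha}_{\xi^{-1}}(b_i^{(2)})\bigr) u_{\xi\xi^{-1}\chi} = \tilde{m}\tilde{m}^\ast(b)\, u_\chi .
\end{align*}
Since the elements $b u_\chi$ span the crossed product, $m_\rtimes m_\rtimes^\ast = \mathrm{Id}$ holds if and only if $\tilde{m}\tilde{m}^\ast(b) = b$ for all $b$. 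For the forward implication this is immediate. For the converse, I take $\chi = 1$ and use the uniqueness of the coefficients $b_\chi$ in the decomposition $\sum_\chi b_\chi u_\chi$.

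\emph{Main obstacle.} The only delicate point is a possible circularity. Lemma~\ref{land::lemma:description_of_crossed_product} is phrased in terms of the inner product coming from $\psi_\rtimes$, so it needs $\psi_\rtimes$ to be faithful and positive. Its derivation of $m_\rtimes^\ast$, however, uses only the adjoint definition together with the $\hat{\alpha}$-invariance of $\tilde{\psi}$, not the identity $\tilde{m}\tilde{m}^\ast = \mathrm{Id}$. That is why I establish faithfulness and positivity first, in both directions, before invoking the lemma. Once that is in place, everything else is the short computation above.
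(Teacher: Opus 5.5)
Your proof is correct and follows the paper's argument. Applying $m_\rtimes$ to the comultiplication formula \eqref{land::eq:comultiplication_formula_on_crossed_product} gives $m_\rtimes m_\rtimes^\ast(bu_\chi) = \tilde{m}\tilde{m}^\ast(b)\,u_\chi$, hence $m_\rtimes m_\rtimes^\ast = \mathrm{Id}$ iff $\tilde{m}\tilde{m}^\ast = \mathrm{Id}$, and positivity and faithfulness are handled separately; the paper merely calls these evident, so your explicit identity $\psi_\rtimes(x^\ast x) = n\sum_\chi \tilde{\psi}(b_\chi^\ast b_\chi)$, together with settling faithfulness before invoking Lemma~\ref{land::lemma:description_of_crossed_product} (whose adjoints need the GNS inner products), tightens the paper's argument rather than changing it.
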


\begin{proof}
    Using \eqref{land::eq:comultiplication_formula_on_crossed_product} from Lemma \ref{land::lemma:description_of_crossed_product} observe that for $b \in \tilde{B}$ and $\chi \in \hat{G}$,
    \begin{align*}
        m_\rtimes m_\rtimes^\ast\left(b u_\chi \right)
            &= \frac{1}{n} \sum_{\xi \in \hat{G}} \sum_i b_i^{(1)} u_\xi \hat{\alpha}_{\xi^{-1}}(b_i^{(2)}) u_{\xi^{-1} \chi} \\
            &= \sum_i b_i^{(1)} b_i^{(2)} u_\chi
    \end{align*}
    is equal to $b u_\chi$ if and only if $\sum_i b_i^{(1)} b_i^{(2)} = b$ holds. Thus, we have that $m_\rtimes m_\rtimes^\ast = \mathrm{Id}_{\tilde{B} \rtimes_{\hat{\alpha}} \hat{G}}$ is equivalent to $\tilde{m} \tilde{m}^\ast = \mathrm{Id}_{\tilde{B}}$. Evidently, $\psi_\rtimes$ is positive and faithful if and only if the same holds for $\tilde{\psi}$. Thus, the former is a quantum set functional if and only if the latter one is.
\end{proof}

If $\hat{\alpha}$ is the trivial action, then it is well known that the crossed product is simply a tensor product. Let us describe this situation in more detail to prepare for later calculations.

\begin{example}
    \label{crossed::example:tensor_product_quantum_set}
    Assume that $\hat{\alpha}: \hat{G} \to \mathrm{Aut}(\tilde{B}, \tilde{\psi})$ is the trivial action given by
    \begin{align*}
        \hat{\alpha}_\chi(b) = b \quad \text{ for all } b \in \tilde{B}, \chi \in \hat{G}.
    \end{align*}
    Then the crossed product $\tilde{B} \rtimes_{\hat{\alpha}} \hat{G}$ is actually isomorphic to the tensor product C*-algebra $\tilde{B} \otimes C(G)$, and an isomorphism is given by
    \begin{align*}
        \tilde{B} \rtimes_{\hat{\alpha}} \hat{G} &\to \tilde{B} \otimes C(G), \\
        \sum_{\chi \in \hat{G}} b_\chi u_\chi &\mapsto \sum_{\chi \in \hat{G}} b_\chi \otimes \tilde{u}_\chi,
    \end{align*}
    where $\tilde{u}_\chi = (\chi(g))_{g \in G} \in C(G)$ is the function on $G$ which evaluates to $\chi(g)$ at $g \in G$.
    Recall the functional $\psi_G$ on $C(G)$ from Example \ref{pre::ex:examples_of_quantum_sets} which is given by, equivalently,
    \begin{align*}
        \psi_G(e_g) = 1 \text{ for all } g \in G 
        \quad \text{ or } \quad 
        \psi_G\left( \tilde{u}_\chi \right) = n \delta_{\chi=1} \text{ for all } \chi \in \hat{G},
    \end{align*}
    where $n = |G|$. 
    The C*-algebra isomorphism from above identifies the functionals $\psi_\rtimes$ and $\tilde{\psi} \otimes \psi_G$.
    Indeed, we have for every $b \in \tilde{B}$ and $\chi \in \hat{G}$,
    \begin{align*}
        \psi_\rtimes(b u_\chi) = \delta_{\chi=1} n \tilde{\psi}(b),
    \end{align*}
    as well as
    \begin{align*}
        (\tilde{\psi} \otimes \psi_G)(b \otimes \tilde{u}_\chi) 
            = \delta_{\chi=1} n \tilde{\psi}(b).
    \end{align*}
    Summarizing, if $\hat{\alpha}$ is the trivial action, then we have an isomorphism of quantum sets
    \begin{align*}
        (\tilde{B} \otimes C(G), \tilde{\psi} \otimes \psi_G) \cong (\tilde{B} \rtimes_{\hat{\alpha}} \hat{G}, \psi_\rtimes).
    \end{align*}
    We denote the quantum set on the left-hand side by $(\tilde{B}, \tilde{\psi}) \otimes (C(G), \psi_G)$ and call it the \emph{tensor product quantum set} of $(\tilde{B}, \tilde{\psi})$ and $(C(G), \psi_G)$.
    Later we will prove the remarkable fact that for \emph{any} action $\hat{\alpha}$, which is not necessarily trivial, one has a \emph{quantum} isomorphism 
    $(\tilde{B}, \tilde{\psi}) \otimes (C(G), \psi_G) \cong (\tilde{B}, \tilde{\psi}) \rtimes_{\hat{\alpha}} \hat{G}$.
\end{example}

\subsection{Landstad Duality}

Let us finally turn to the question of how the operation $(\tilde{B}, \tilde{\psi}) \mapsto (\tilde{B}, \tilde{\psi}) \rtimes_{\hat{\alpha}} \hat{G}$ can be inverted. Thus, given a quantum set $(B, \psi)$ we want to find a smaller quantum set $(\tilde{B}, \tilde{\psi})$ together with an action $\hat{\alpha}: \hat{G} \to \mathrm{Aut}(\tilde{B}, \tilde{\psi})$ such that
\begin{align*}
    (B, \psi) \cong (\tilde{B}, \tilde{\psi}) \rtimes_{\hat{\alpha}} \hat{G}.
\end{align*}
For C*-algebras this question is answered by Landstad's Duality Theorem \cite{landstad_duality_1979}. This theorem works for general C*-algebras and von Neumann algebras, but we will only recall the particular version for finite-dimensional C*-algebras which is sufficient for our purposes.

\begin{thm}[{\cite[Theorem 2]{landstad_duality_1979}}]
    \label{pre::thm:landstads_thm}
    Let $B$ be a finite-dimensional C*-algebra and $\alpha: G \to \mathrm{Aut}(B)$ an action of a finite abelian group $G$ on $B$. Then $B$ is isomorphic to a crossed product C*-algebra over an action of $\hat{G}$ if and only if there exists a unitary representation $(u_\chi)_{\chi \in \hat{G}} \subset B$ such that
    \begin{align*}
        \alpha_g(u_\chi) = \chi(g)u_\chi
    \end{align*}
    holds for all $g \in G, \chi \in \hat{G}$. In this case, one has 
    \begin{align*}
        B \cong B^\alpha \rtimes_{\hat{\alpha}} \hat{G},
    \end{align*}
    where $B^\alpha = \{b \in B \mid \alpha_g(b) = b \text{ for all } g \in G\}$ is the fixed point algebra and $\hat{\alpha}: \hat{G} \to \mathrm{Aut}(B^\alpha)$ is the action defined by 
    \begin{align*}
        \hat{\alpha}_\chi(b) = u_\chi b u_\chi^\ast.
    \end{align*}
\end{thm}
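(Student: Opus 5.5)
We need to prove Landstad's theorem in the finite-dimensional, finite abelian case.

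For the forward direction, suppose $B \cong \tilde{B} \rtimes_{\hat{\beta}} \hat{G}$ for some action $\hat{\beta}$. We transport the dual action. On the crossed product, define
\begin{align*}
\alpha_g\Bigl(\sum_\chi b_\chi u_\chi\Bigr) = \sum_\chi \chi(g)\, b_\chi u_\chi .
\end{align*}
A direct check with the multiplication and involution rules of Notation \ref{pre::notation:crossed_product} shows that each $\alpha_g$ is a $\ast$-automorphism. Multiplicativity holds because $\chi(g)\xi(g)=(\chi\xi)(g)$ and $\hat{\beta}_\chi(c_\xi)$ carries no $u$. The involution is respected because $\overline{\chi(g)}=\chi^{-1}(g)$. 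The canonical unitaries $u_\chi$ then satisfy $\alpha_g(u_\chi)=\chi(g)u_\chi$, and we pull everything back along the isomorphism.

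There is one subtlety: the statement fixes $\alpha$ in advance. I read the intended content as "for the given $\alpha$, $B\cong B^\alpha\rtimes\hat{G}$ iff such $u_\chi$ exist." The forward implication then needs the isomorphism to intertwine $\alpha$ with the dual action. I will state it in that equivariant form and prove only the converse in detail, since the converse is what the paper uses in Section \ref{sec::qgross_tucker}.

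For the converse, assume $(u_\chi)$ is a unitary representation with $\alpha_g(u_\chi)=\chi(g)u_\chi$. I proceed in four steps.

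First, the formula $\hat{\alpha}_\chi(b)=u_\chi b u_\chi^\ast$ preserves $B^\alpha$, since
\begin{align*}
\alpha_g(u_\chi b u_\chi^\ast)=\chi(g)\overline{\chi(g)}\,u_\chi b u_\chi^\ast = u_\chi b u_\chi^\ast .
\end{align*}
Because $\chi\mapsto u_\chi$ is a representation, this defines an action of $\hat{G}$ on $B^\alpha$.

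Second, I use the spectral projections
\begin{align*}
P_\chi(b)=\frac1{|G|}\sum_{g}\overline{\chi(g)}\,\alpha_g(b).
\end{align*}
By orthogonality of characters we have $b=\sum_\chi P_\chi(b)$, and $P_\chi(b)$ lies in the spectral subspace $B_\chi=\{b:\alpha_g(b)=\chi(g)b\}$. The key observation is that $B_\chi=B^\alpha u_\chi$. Indeed, if $b\in B_\chi$ then $b u_\chi^\ast\in B^\alpha$, and conversely $B^\alpha u_\chi \subseteq B_\chi$. Hence every $b$ can be written uniquely as $\sum_\chi b_\chi u_\chi$ with $b_\chi\in B^\alpha$. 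Uniqueness follows from the directness of the spectral decomposition together with the invertibility of $u_\chi$.

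Third, I define
\begin{align*}
\Phi: B^\alpha\rtimes_{\hat{\alpha}}\hat{G}\to B, \qquad \sum_\chi b_\chi u_\chi \mapsto \sum_\chi b_\chi u_\chi .
\end{align*}
The symbol $u_\chi$ on the left is the abstract generator and on the right the given unitary. Multiplicativity follows from
\begin{align*}
b_\chi u_\chi c_\xi u_\xi = b_\chi (u_\chi c_\xi u_\chi^\ast) u_\chi u_\xi = b_\chi\hat{\alpha}_\chi(c_\xi)u_{\chi\xi},
\end{align*}
which matches the crossed product rule. The involution is handled similarly. Bijectivity is exactly the unique decomposition from the second step, so $\Phi$ is the required $\ast$-isomorphism. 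Since $B$ is finite-dimensional, the crossed product has no completion issues, and the algebraic crossed product is already the C*-crossed product.

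The main obstacle is the identification $B_\chi=B^\alpha u_\chi$, which is the only point where the hypothesis is used. It is short once we multiply by $u_\chi^\ast$ and check fixedness. The rest is routine bookkeeping with characters.
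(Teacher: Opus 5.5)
The paper gives no proof of this statement; it simply quotes Landstad's general duality theorem for locally compact abelian groups. In that setting the unitaries live in the multiplier algebra and must be strictly continuous, and the coefficient algebra is a generalized fixed-point algebra singled out by extra conditions.

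Your argument is a correct, self-contained proof of the finite case:
the character averages $P_\chi$ split $B$ into spectral subspaces $B_\chi$, and the hypothesis gives $B_\chi=B^\alpha u_\chi$;
the relations $u_\chi c u_\chi^\ast=\hat{\alpha}_\chi(c)$ and $u_\chi^\ast=u_{\chi^{-1}}$ reproduce exactly the product and involution rules of Notation~\ref{pre::notation:crossed_product};
bijectivity of $\Phi$ is the uniqueness of the decomposition.

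What your route buys is transparency. It shows that for finite $G$ Landstad's extra conditions are vacuous and the coefficient algebra is just $B^\alpha$. It goes through verbatim for any unital $B$, since a crossed product by a finite group is already algebraic. It also isolates the identity $B_\chi=B^\alpha u_\chi$, which the paper later uses implicitly, e.g.\ for the unique decomposition $b=\sum_\chi b_\chi u_\chi$ in Section~\ref{sec::qgross_tucker} and for $A(B^\alpha u_\chi)\subset B^\alpha u_\chi$ in Lemma~\ref{lemma:technical_properties_of_Ag_Xg}. The citation only buys a generality the paper never needs.

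Your caveat about the forward direction is also correct. With $\alpha$ fixed, the ``only if'' part has to be read equivariantly, meaning the isomorphism intertwines $\alpha$ with the dual action. Otherwise it fails: the trivial $\mathbb{Z}_2$-action on $\mathbb{C}^2\cong\mathbb{C}\rtimes_{\mathrm{triv}}\hat{\mathbb{Z}}_2$ admits no unitary $u$ with $\alpha(u)=-u$. That equivariant form is how Landstad's theorem is actually formulated, and only the converse is used in the paper.
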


It is not difficult to obtain a version of Landstad Duality for quantum sets by combining the above theorem with our discussion of crossed product quantum sets.

\begin{proposition}
    \label{pre::prop:landstad_for_quantum_sets}
    Let $(B, \psi)$ be a quantum set and $\alpha: G \to \mathrm{Aut}(B, \psi)$ be an action of a finite abelian group $G$ on $(B, \psi)$, which satisfies the conditions of Landstad's Duality Theorem (Theorem \ref{pre::thm:landstads_thm}). Additionally, assume that
    \begin{align}
        \label{land::eq:psi_invariant_under_adjoint_action}
        \psi = \psi \, \mathrm{Ad}(u_\chi)
    \end{align}
    holds for all $\chi \in \hat{G}$, where $(u_\chi)_{\chi \in \hat{G}} \subset B$ is the unitary representation from Landstad's theorem.
    Then one has an isomorphism of quantum sets
    \begin{align*}
        (B, \psi) \cong (B^\alpha, \tilde{\psi}) \rtimes_{\hat{\alpha}} \hat{G},
    \end{align*}
    where $\hat{\alpha}: \hat{G} \to \mathrm{Aut}(B^\alpha, \tilde{\psi})$ is the action from Landstad's theorem and $\tilde{\psi}$ is the quantum set functional on $B^\alpha$ given by
    \begin{align*}
        \tilde{\psi} := \frac{1}{n} \psi|_{B^\alpha}
    \end{align*}
    with $n := |G| = |\hat{G}|$.
\end{proposition}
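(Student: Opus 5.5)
By Landstad's Theorem (Theorem \ref{pre::thm:landstads_thm}) we already have a $\ast$-isomorphism
\begin{align*}
    \Phi: B^\alpha \rtimes_{\hat{\alpha}} \hat{G} \to B, \qquad \sum_{\chi} b_\chi u_\chi \mapsto \sum_\chi b_\chi u_\chi ,
\end{align*}
where on the left $u_\chi$ denotes the abstract implementing unitaries and on the right the given unitaries in $B$. So two things remain. First, $\hat{\alpha}$ must act by automorphisms of the quantum set $(B^\alpha, \tilde{\psi})$, and $\tilde{\psi}$ must be a quantum set functional. Second, $\psi \Phi = \psi_\rtimes$ must hold, where $\psi_\rtimes$ is built from $\tilde{\psi}$ as in Definition \ref{crossed::def:crossed_product_quantum_set}.

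The first point is immediate for the action. $\mathrm{Ad}(u_\chi)$ preserves $B^\alpha$ because
\begin{align*}
    \alpha_g(u_\chi b u_\chi^*) = \chi(g)\overline{\chi(g)} u_\chi b u_\chi^* = u_\chi b u_\chi^*
\end{align*}
for $b \in B^\alpha$. It preserves $\tilde{\psi}$ by assumption \eqref{land::eq:psi_invariant_under_adjoint_action}. Also, $\tilde{\psi}$ is faithful and positive, being a restriction of $\psi$.

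For the functional identity, I use the averaging map
\begin{align*}
    P(b) = \frac{1}{n}\sum_g \alpha_g(b).
\end{align*}
Since $\alpha$ preserves $\psi$, we get $\psi = \psi P$. For $b \in B^\alpha$,
\begin{align*}
    P(b u_\chi) = b u_\chi \cdot \frac{1}{n}\sum_g \chi(g) = \delta_{\chi=1}\, b
\end{align*}
by orthogonality of characters. Hence
\begin{align*}
    \psi\bigl(\Phi(b u_\chi)\bigr) = \psi\bigl(P(b u_\chi)\bigr) = \delta_{\chi=1}\, \psi(b) = \delta_{\chi=1}\, n\, \tilde{\psi}(b) = \psi_\rtimes(b u_\chi).
\end{align*}
So $\Phi$ intertwines $\psi_\rtimes$ and $\psi$.

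What remains is that $\tilde{\psi}$ is a quantum set functional on $B^\alpha$. I take this from Proposition \ref{crossed::prop:qset_functional_on_crossed_product}, which says $\psi_\rtimes$ is a quantum set functional if and only if $\tilde{\psi}$ is. Since $\Phi$ is a $\ast$-isomorphism carrying $\psi_\rtimes$ to $\psi$, it is unitary for the GNS inner products and intertwines the multiplications. It therefore intertwines the comultiplications, so $m_\rtimes m_\rtimes^* = \mathrm{Id}$ follows from $m m^* = \mathrm{Id}_B$. Thus $\psi_\rtimes$ is a quantum set functional, and so is $\tilde{\psi}$.

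The main subtlety is a circularity. Proposition \ref{crossed::prop:qset_functional_on_crossed_product} is stated for a quantum set $(\tilde{B},\tilde{\psi})$ with an action in $\mathrm{Aut}(\tilde{B},\tilde{\psi})$. Here I apply its reverse direction to a functional $\tilde{\psi}$ not yet known to be a quantum set functional. This is harmless. The only ingredients used there, the formulas of Lemma \ref{land::lemma:description_of_crossed_product} and the computation of $m_\rtimes m_\rtimes^*$, need only that $\tilde{\psi}$ is faithful, positive and $\hat{\alpha}$-invariant. All three were established above, so I would note this explicitly in the proof.
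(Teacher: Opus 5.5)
Your proposal is correct and follows the paper's proof. You check $\hat{\alpha}$-invariance of $\tilde{\psi}$ via \eqref{land::eq:psi_invariant_under_adjoint_action}, derive $\psi(b u_\chi) = n\delta_{\chi=1}\tilde{\psi}(b)$ from $\alpha$-invariance of $\psi$ (your averaging map $P$ is just the averaged form of the paper's observation $\psi(bu_\chi) = \chi(g)\psi(bu_\chi)$), and then apply Proposition \ref{crossed::prop:qset_functional_on_crossed_product} in the reverse direction. Your remark that this last step needs only that $\tilde{\psi}$ be faithful, positive and $\hat{\alpha}$-invariant usefully spells out a point the paper leaves implicit.
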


\begin{proof}
    In view of Landstad's theorem, we only need to check that $\tilde{\psi}$ is a quantum set functional on $B^\alpha$ that is preserved by the action $\hat{\alpha}$, and that $\psi = \psi_\rtimes$ in the notation of the previous subsection. 

    First, let us check that $\tilde{\psi}$ is preserved by $\hat{\alpha}$ in the sense that
    \begin{align*}
        \tilde{\psi} = \tilde{\psi} \hat{\alpha}_\chi
    \end{align*}
    holds for all $\chi \in \hat{G}$. Due to \eqref{land::eq:psi_invariant_under_adjoint_action} we have for all $b \in B^\alpha$ and $\chi \in \hat{G}$
    \begin{align*}
        \tilde{\psi}(\hat{\alpha}_\chi(b)) 
            = \frac{1}{n} \psi(u_\chi b u_\chi^\ast) 
            = \frac{1}{n} \psi(b)
            = \tilde{\psi}(b).
    \end{align*}
    
    Next, we verify $\psi = \psi_\rtimes$ where $\psi_\rtimes$ is derived from $\tilde{\psi}$ as in Definition \ref{crossed::def:crossed_product_quantum_set}. Thus, we need to check that
    $$
        \psi(b u_\chi) = n \delta_{\chi=1} \tilde{\psi}(b)
    $$ 
    holds for all $b \in B^\alpha$ and $\chi \in \hat{G}$.
    Let us first prove this equation. Since $\alpha$ is an action on the quantum set $(B, \psi)$, we have for all $b \in B^\alpha$ and $\chi \in \hat{G}$
    \begin{align*}
        \psi(b u_\chi) = \psi(\alpha_g(b u_\chi)) = \chi(g) \psi(b u_\chi)
    \end{align*}
    for all $g \in G$. Hence, one has $\psi(b u_\chi) = 0$ whenever $\chi \neq 1$. It follows directly that
    \begin{align*}
        \psi(b u_\chi) = \delta_{\chi=1} \psi(b) = n \delta_{\chi=1} \tilde{\psi}(b)
    \end{align*}
    holds for all $b \in B^\alpha$ and $\chi \in \hat{G}$.
    
    In fact, this already proves that $\tilde{\psi}$ is a quantum set functional on $B^\alpha$: Proposition \ref{crossed::prop:qset_functional_on_crossed_product} guarantees that $\psi$ is a quantum set functional on $B$ if and only if $\tilde{\psi}$ is a quantum set functional on $B^\alpha$.
\end{proof}

\begin{remark}
    If $G$ is a finite group acting on a finite set $X$, then $X\cong X/G\times G$ as $G$-spaces, where the action on $X/G\times G$ is by the group multiplication on the right component. This fact plays a crucial role in the Gross--Tucker theorem for finite graphs. In general, if a compact group $G$ acts freely on a compact space $X$, the existence of the $G$-homeomorphism $X\cong X/G\times G$ is not automatic and is equivalent to the existence of a $G$-equivariant map $X\to G$~\cite{husemoller_bundles_1994}. In the noncommutative case, a $G$-equivariant map $X\to G$ translates to a unital $G$-equivariant $*$-homomorphism $C(G)\to A$, where $A$ is a unital C*-algebra, which is connected to the notion of a trivial quantum principal bundle~\cite{baum_hajac_dabrowski_bu_2015}. When $G$ is abelian, due to the Pontryagin duality, this condition is equivalent to the existence of a representation of the dual group $\hat{G}$ as in the assumptions of the Landstad theorem. Therefore, to obtain a quantum-graph version of the Gross--Tucker theorem, we have to rely on Theorem~\ref{pre::prop:landstad_for_quantum_sets}.
\end{remark}

\section{Voltage and derived quantum graphs}
\label{sec::qvoltage_and_derived_graphs}

In analogy with the classical theory of Gross and Tucker we introduce the notion of a \emph{voltage quantum graph} and its associated \emph{derived quantum graph}. The voltage quantum graph should be thought of as (not necessarily simple) quantum graph whose edges are labeled by elements of a finite abelian group $G$. Then, the derived quantum graph is a simple quantum graph on a larger crossed product quantum set that is constructed from the voltage quantum graph in a way that generalizes the classical derived graph construction.

Let us make the setting more precise. Throughout this section, $(\tilde{B}, \tilde{\psi})$ is a quantum set and $G$ is a finite abelian group. Further, let $\hat{\alpha}: \hat{G} \to \mathrm{Aut}(\tilde{B}, \tilde{\psi})$ be an action of the dual group $\hat{G}$ on $(\tilde{B}, \tilde{\psi})$, and let
\begin{align*}
    (B, \psi_\rtimes) := (\tilde{B}, \tilde{\psi}) \rtimes_{\hat{\alpha}} \hat{G}
    \quad \text{ with } \quad
    B = \tilde{B} \rtimes_{\hat{\alpha}} \hat{G}
\end{align*}
be the associated crossed product quantum set (see Definition \ref{crossed::def:crossed_product_quantum_set}). Recall the linear map $E_\rtimes: B \to \tilde{B}$ from Notation \ref{pre::notation:crossed_product_quantum_sets} as well as the unitary elements $(u_\chi)_{\chi \in \hat{G}} \subset B$ from Notation \ref{pre::notation:crossed_product}.

\begin{definition}
    \label{der::def:derived_quantum_graph}
    Let $(\tilde{B}, \tilde{\psi})$, $\hat{\alpha}: \hat{G} \to \mathrm{Aut}(\tilde{B}, \tilde{\psi})$, $(B, \psi_\rtimes)$ and $(u_\chi)_{\chi \in \hat{G}} \subset B$ be as above.
    \begin{enumerate}[label=(\alph*)]
        \item For every $g \in G$ let $X_g := \frac{1}{n} \sum_{\chi \in \hat{G}}\overline{\chi(g)} u_\chi u_\chi^\dagger$, i.e.
        \begin{align*}
            X_g : B \to B, \quad b \mapsto \frac{1}{n} \sum_{\chi \in \hat{G}} \overline{\chi(g)} \langle u_\chi, b \rangle_{\psi_\rtimes} u_\chi.
        \end{align*}
        \item A \emph{voltage quantum graph} on the quantum set $(\tilde{B}, \tilde{\psi})$ with respect to the group action $\hat{\alpha}: \hat{G} \to \mathrm{Aut}(\tilde{B}, \tilde{\psi})$ is a family $\tilde{A} = (\tilde{A}_g)_{g \in G}$ of quantum adjacency matrices that is indexed by $G$ and satisfies
        \begin{align}
            \label{der::eq:qvoltage_graph_invariance_under_group_action}
            \tilde{A}_g \hat{\alpha}_\chi = \hat{\alpha}_\chi \tilde{A}_g
        \end{align}
        for all $g \in G$ and $\chi \in \hat{G}$. 
        \item The \emph{derived quantum graph} $\tilde{A} \rtimes_{\hat{\alpha}} \hat{G}: B \to B$ on $(B, \psi_\rtimes)$ is then given by
        \begin{align*}
            \tilde{A} \rtimes_{\hat{\alpha}} \hat{G} := \sum_{g \in G} m(X_g \otimes E_\rtimes^\ast \tilde{A}_g E_\rtimes) m^\ast,
        \end{align*}
        where we write (here and in the rest of the section) $m$ for the multiplication map on $B$, i.e. $m = m_\rtimes$.
    \end{enumerate}
\end{definition}

The rest of this section is devoted to proving that this definition is well defined. Before that, let us discuss the case of the trivial action in more detail.

\begin{example}
    \label{der::ex:classical_voltage_graphs_as_quantum_voltage_graphs}
    Assume that $\hat{\alpha}: \hat{G} \to \mathrm{Aut}(\tilde{B}, \tilde{\psi})$ is the trivial action, i.e.
    \begin{align*}
        \hat{\alpha}_\chi = \mathrm{id}_{\tilde{B}} \quad \text{ for all } \chi \in \hat{G}.
    \end{align*}
    Then condition \eqref{der::eq:qvoltage_graph_invariance_under_group_action} is satisfied by any family of quantum adjacency matrices $(\tilde{A}_g)_{g \in G}$ on $(\tilde{B}, \tilde{\psi})$. Further, on the C*-algebra side the crossed product is isomorphic to the tensor product, i.e.
    \begin{align*}
        B = \tilde{B} \rtimes_{\hat{\alpha}} \hat{G} \cong \tilde{B} \otimes C(G).
    \end{align*}

    Recall from Example \ref{pre::ex:classical_graphs_as_quantum_graphs} that classical graphs on a finite set $V$ correspond to quantum graphs on the quantum set $(\C^V, \psi_V)$. If $(\tilde{B}, \tilde{\psi})$ is a quantum set of this form, then a voltage quantum graph on $(\tilde{B}, \tilde{\psi})$ with respect to the trivial action $\hat{\alpha}$ of a finite abelian dual group $\hat{G}$ can be identified with a classical voltage graph in the sense of Gross and Tucker. We will discuss this in more detail in the next section.
\end{example}

\begin{remark}
    \label{der::rem:intersection_of_qgraphs}
    The above definitions have a very visual interpretation which we will discuss later in more detail for the case where $(\tilde{B}, \tilde{\psi}) = (\C^V, \psi_V)$ is a classical set and $\hat{\alpha}$ is the trivial action.
    For now, let us only remark that the operation
    \begin{align}
        \label{der::eq:intersection_of_quantum_graphs}
        (X_g, E_\rtimes^\ast \tilde{A}_g E_\rtimes) \mapsto m(X_g \otimes E_\rtimes^\ast \tilde{A}_g E_\rtimes) m^\ast
    \end{align}
    can be interpreted as taking the intersection of the two quantum graphs $X_g$ and $E_\rtimes^\ast \tilde{A}_g E_\rtimes$. If $(\tilde{B}, \tilde{\psi})$ and $(B, \psi_\rtimes)$ are both classical sets and both $X_g$ and $E_\rtimes^\ast \tilde{A}_g E_\rtimes$ are classical graphs, then this operation always produces a new graph (or rather adjacency matrix) which has an edge between two vertices whenever both $X_g$ and $E_\rtimes^\ast \tilde{A}_g E_\rtimes$ contain this particular edge. However, in the truly quantum setting it is generally not guaranteed that two quantum adjacency matrices produces a new quantum adjacency matrix via \eqref{der::eq:intersection_of_quantum_graphs}. This is similar and related to the fact that, in a noncommutative C*-algebra, the product of two projections is not necessarily again a projection.
\end{remark}

We proceed to prove that the maps $X_g$ and $A$ from the previous definition are indeed quantum adjacency matrices on the quantum set $(B, \psi_\rtimes)$.

\begin{lemma}
    \label{der::lemma:commutation_in_derived_graph}
    We have for every $g \in G$ and $b u_\chi \in B$,
    \begin{align}
        \label{der::eq:formula_for_X_g}
        X_g(b u_\chi) &= \overline{\chi(g)} \tilde{\psi}(b) u_\chi, \\
        \label{der::eq:formula_for_m(X_g_otimes_EstarAE)mstar}
        m(X_g \otimes E_\rtimes^\ast \tilde{A}_g E_\rtimes) m^\ast(b u_\chi) &= \overline{\chi(g)} \tilde{A}_g(b) u_\chi, \\
        \label{der::eq:commutation_in_derived_graph}
        m(X_g \otimes E_\rtimes^\ast \tilde{A}_g E_\rtimes) m^\ast &= m(E_\rtimes^\ast \tilde{A}_g E_\rtimes \otimes X_g) m^\ast.
    \end{align}
\end{lemma}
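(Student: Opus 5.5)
All three formulas are direct computations with the explicit descriptions from Lemma \ref{land::lemma:description_of_crossed_product}. The plan is to work on elements of the form $b u_\chi$ with $b \in \tilde{B}$ and $\chi \in \hat{G}$, and to extend by linearity.

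\textbf{Formula \eqref{der::eq:formula_for_X_g}.} Compute $\langle u_\xi, b u_\chi \rangle_{\psi_\rtimes}$ using \eqref{land::eq:formula_for_scalar_product_on_crossed_product_algebra} with $1 \in \tilde{B}$:
\begin{align*}
    \langle u_\xi, b u_\chi \rangle_{\psi_\rtimes} = n \delta_{\xi=\chi} \langle 1, b \rangle_{\tilde{\psi}} = n \delta_{\xi=\chi} \tilde{\psi}(b).
\end{align*}
Inserting this into the definition of $X_g$, only the term $\xi=\chi$ survives, and the factor $\frac{1}{n}$ cancels the $n$. This gives \eqref{der::eq:formula_for_X_g}.

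\textbf{Formula \eqref{der::eq:formula_for_m(X_g_otimes_EstarAE)mstar}.} Start from the comultiplication formula \eqref{land::eq:comultiplication_formula_on_crossed_product}:
\begin{align*}
    m^\ast(b u_\chi) = \frac{1}{n} \sum_{\xi} \sum_i b_i^{(1)} u_\xi \otimes \hat{\alpha}_{\xi^{-1}}(b_i^{(2)}) u_{\xi^{-1}\chi}.
\end{align*}
Apply $E_\rtimes^\ast \tilde{A}_g E_\rtimes$ to the right leg. Since $E_\rtimes$ kills everything except the $u_1$-component and multiplies it by $n$, only $\xi = \chi$ survives. By \eqref{eq:formula_for_E*} the right leg then becomes $n \tilde{A}_g(\hat{\alpha}_{\chi^{-1}}(b_i^{(2)}))$. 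Apply $X_g$ to the left leg via \eqref{der::eq:formula_for_X_g}, which gives $\overline{\chi(g)} \tilde{\psi}(b_i^{(1)}) u_\chi$. Multiplying the two legs yields
\begin{align*}
    \overline{\chi(g)} \sum_i \tilde{\psi}(b_i^{(1)})\, u_\chi \tilde{A}_g(\hat{\alpha}_{\chi^{-1}}(b_i^{(2)})).
\end{align*}
Three facts now finish the computation:
\begin{enumerate}
    \item $\sum_i \tilde{\psi}(b_i^{(1)}) b_i^{(2)} = b$, by Proposition \ref{pre::prop:properties_of_counit_and_comultiplication}(c) and linearity of $\tilde{A}_g \hat{\alpha}_{\chi^{-1}}$;
    \item $\tilde{A}_g \hat{\alpha}_{\chi^{-1}} = \hat{\alpha}_{\chi^{-1}} \tilde{A}_g$, by the invariance condition \eqref{der::eq:qvoltage_graph_invariance_under_group_action};
    \item $u_\chi \hat{\alpha}_{\chi^{-1}}(c) = \hat{\alpha}_\chi \hat{\alpha}_{\chi^{-1}}(c) u_\chi = c\, u_\chi$.
\end{enumerate}
Together they give $\overline{\chi(g)} \tilde{A}_g(b) u_\chi$, as claimed. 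The only delicate point in the whole lemma is tracking the ordering of $u_\chi$ against the twisted element in this step; fact (b) is exactly what removes the twist.

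\textbf{Formula \eqref{der::eq:commutation_in_derived_graph}.} Do the same computation with the legs swapped. Now $E_\rtimes$ acts on the left leg $b_i^{(1)} u_\xi$, which forces $\xi = 1$ and gives $n \tilde{A}_g(b_i^{(1)})$. The right leg is $b_i^{(2)} u_\chi$, and $X_g$ turns it into $\overline{\chi(g)} \tilde{\psi}(b_i^{(2)}) u_\chi$. The factors of $n$ cancel, and Proposition \ref{pre::prop:properties_of_counit_and_comultiplication}(c), now in the form $(\mathrm{Id} \otimes \tilde{\psi})\tilde{m}^\ast = \mathrm{Id}$, gives $\overline{\chi(g)} \tilde{A}_g(b) u_\chi$. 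This equals the left-hand side by \eqref{der::eq:formula_for_m(X_g_otimes_EstarAE)mstar}, which proves \eqref{der::eq:commutation_in_derived_graph}. No twisting occurs in this version.
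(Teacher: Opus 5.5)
Your proposal is correct and follows the paper's proof essentially step by step. The sum defining $X_g$ collapses via \eqref{land::eq:formula_for_scalar_product_on_crossed_product_algebra}, $E_\rtimes$ forces $\xi=\chi$ in the comultiplication formula, and invariance of $\tilde{A}_g$ under $\hat{\alpha}$, together with $u_\chi \hat{\alpha}_{\chi^{-1}}(c) = c\,u_\chi$ and the counit property, removes the twist. Your explicit treatment of the swapped expression, where $E_\rtimes$ forces $\xi = 1$, fills in what the paper only calls ``the same steps''. It also correctly yields $\overline{\chi(g)}\tilde{A}_g(b)u_\chi$, without the stray factor $\frac{1}{n}$ in the paper's display.
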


\begin{proof}
    For the first equation, the definition of $X_g$ yields
    \begin{align*}
        X_g(b u_\chi) &= \frac{1}{n} \sum_{\xi \in \hat{G}} \overline{\xi(g)} u_\xi \, u_\xi^\dagger(b u_\chi),
    \end{align*}
    where $u_\xi^\dagger(b u_\chi) = \langle u_\xi, b u_\chi \rangle_{\psi_\rtimes} = n \delta_{\xi=\chi} \tilde{\psi}(b)$ by \eqref{land::eq:formula_for_scalar_product_on_crossed_product_algebra}. Hence, the sum over $\xi$ collapses, and we get
    \begin{align*}
        X_g(b u_\chi) &= \overline{\chi(g)} \tilde{\psi}(b) u_\chi.
    \end{align*}

    Next, let us use the formula for $m^\ast$ from \eqref{land::eq:comultiplication_formula_on_crossed_product} in \Cref{land::lemma:description_of_crossed_product} to compute the left-hand side of \eqref{der::eq:formula_for_m(X_g_otimes_EstarAE)mstar}. For any $b \in B$ and $\chi \in \hat{G}$ we get
    \begin{align*}
        m(X_g \otimes E_\rtimes^\ast \tilde{A}_g E_\rtimes) m^\ast(b u_\chi)
            &= \frac{1}{n} \sum_{\xi \in \hat{G}} X_g(b_i^{(1)} u_\xi) E_\rtimes^\ast \tilde{A}_g E_\rtimes(\hat{\alpha}_{\xi^{-1}}(b_i^{(2)}) u_{\xi^{-1} \chi}), \\
            &= \frac{1}{n} \sum_{\xi \in \hat{G}} \overline{\xi(g)} \tilde{\psi}(b_i^{(1)}) u_\xi \, E_\rtimes^\ast \tilde{A}_g E_\rtimes(\hat{\alpha}_{\xi^{-1}}(b_i^{(2)}) u_{\xi^{-1} \chi}).
    \end{align*}
    where $\tilde{m}^\ast(b) = \sum_i b_i^{(1)} \otimes b_i^{(2)}$ and we treat the sum over $i$ as implicit. For the second step we use \eqref{der::eq:formula_for_X_g} from above.
    The definition of $E_\rtimes$ and $E_\rtimes^\ast$ from Notation \ref{pre::notation:crossed_product_quantum_sets} and Lemma \ref{land::lemma:description_of_crossed_product} yield
    \begin{align*}
        E_\rtimes^\ast \tilde{A}_g E_\rtimes(\hat{\alpha}_{\xi^{-1}}(b_i^{(2)}) u_{\xi^{-1} \chi}) 
            &= n \delta_{\xi=\chi} \tilde{A}_g(\hat{\alpha}_{\chi^{-1}}(b_i^{(2)})).
    \end{align*}
    Thus, the sum over $\xi$ collapses, and we are left with
    \begin{align*}
        m(X_g \otimes E_\rtimes^\ast \tilde{A}_g E_\rtimes) m^\ast (b u_\chi) 
            &= \overline{\chi(g)} u_\chi \, \tilde{\psi}(b_i^{(1)}) \, \tilde{A}_g(\hat{\alpha}_{\chi^{-1}}(b_i^{(2)})).
    \end{align*}
    Since by assumption $\tilde{A}_g$ is invariant under the group action $\hat{\alpha}$, this expression is equal to 
    \begin{align*}
        \overline{\chi(g)} \tilde{\psi}(b_i^{(1)}) \tilde{A}_g(b_i^{(2)}) u_\chi
            = \overline{\chi(g)} \tilde{A}_g(b) u_\chi,
    \end{align*}
    for it is one of the first properties of the comultiplication that $\psi(b_i^{(1)}) T(b_i^{(2)}) = T(b)$ holds for any linear map $T: \tilde{B} \to \tilde{B}$, see Proposition \ref{pre::prop:properties_of_counit_and_comultiplication}.

    To complete the proof it suffices to show that $m(E_\rtimes^\ast \tilde{A}_g E_\rtimes \otimes X_g) m^\ast$ can be expressed via the same formula, i.e.
    \begin{align*}
        m(E_\rtimes^\ast \tilde{A}_g E_\rtimes \otimes X_g) m^\ast (b u_\chi) 
            &= \frac{1}{n} \overline{\chi(g)} \tilde{A}_g(b) u_\chi.
    \end{align*}
    This can be done with the exact same steps as above where one only needs to interchange the roles of $X_g$ and $E_\rtimes^\ast \tilde{A}_g E_\rtimes$.
\end{proof}

\begin{proposition}
    \label{der::prop:derived_graph_is_quantum_graph}
    The maps $X_g: B \to B$ and $A: B \to B$ from the previous definition are quantum adjacency matrices on the quantum set $(B, \psi_\rtimes)$.
\end{proposition}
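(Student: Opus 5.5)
We check directly, using the explicit formulas of Lemma \ref{der::lemma:commutation_in_derived_graph}, that $X_g$ and $A = \tilde{A} \rtimes_{\hat{\alpha}} \hat{G}$ are Schur-idempotent and $\ast$-preserving. By \eqref{der::eq:formula_for_m(X_g_otimes_EstarAE)mstar}, $A$ acts on $B$ by
\begin{align*}
    A(b u_\chi) = \sum_{g \in G} \overline{\chi(g)} \tilde{A}_g(b) u_\chi =: \hat{A}_\chi(b) u_\chi,
\end{align*}
where $\hat{A}_\chi := \sum_g \overline{\chi(g)} \tilde{A}_g$ is a Fourier transform of the family $(\tilde{A}_g)_g$. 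Similarly, $X_g(b u_\chi) = \overline{\chi(g)} \tilde{\psi}(b) u_\chi$. Both maps therefore have the form $T(b u_\chi) = T_\chi(b) u_\chi$ with $T_\chi : \tilde{B} \to \tilde{B}$. For $X_g$ we have $T_\chi = \overline{\chi(g)} \tilde{\psi}^\ast\tilde{\psi}$, and for $A$ we have $T_\chi = \hat{A}_\chi$. In both cases $T_\chi$ commutes with every $\hat{\alpha}_\xi$: this uses invariance of $\tilde{\psi}$ in the first case and \eqref{der::eq:qvoltage_graph_invariance_under_group_action} in the second.

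\textbf{Schur product.} First we compute the Schur product of two maps $S, T$ of this "diagonal" form using \eqref{land::eq:comultiplication_formula_on_crossed_product}:
\begin{align*}
    m(S \otimes T)m^\ast(b u_\chi) = \frac1n \sum_{\xi} S_\xi(b_i^{(1)}) u_\xi \, T_{\xi^{-1}\chi}(\hat{\alpha}_{\xi^{-1}}(b_i^{(2)})) u_{\xi^{-1}\chi}.
\end{align*}
Using $T_\eta \hat{\alpha}_{\xi^{-1}} = \hat{\alpha}_{\xi^{-1}} T_\eta$ and $u_\xi \hat{\alpha}_{\xi^{-1}}(c) = c\, u_\xi$, this equals
\begin{align*}
    \frac1n \sum_\xi \tilde{m}(S_\xi \otimes T_{\xi^{-1}\chi})\tilde{m}^\ast(b)\, u_\chi.
\end{align*}
Hence Schur multiplication becomes a convolution over $\hat{G}$ of Schur products on $\tilde{B}$, normalized by $\frac1n$.

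\textbf{Schur idempotence.} For $A$, set $\hat{A}_\xi = \sum_g \overline{\xi(g)}\tilde{A}_g$ and $\hat{A}_{\xi^{-1}\chi} = \sum_h \overline{\xi^{-1}\chi(h)}\tilde{A}_h$. Summing over $\xi$ gives $n\delta_{g=h}$, so the convolution reduces to
\begin{align*}
    \sum_g \overline{\chi(g)}\, \tilde{m}(\tilde{A}_g \otimes \tilde{A}_g)\tilde{m}^\ast = \sum_g \overline{\chi(g)} \tilde{A}_g = \hat{A}_\chi,
\end{align*}
by Schur idempotence of each $\tilde{A}_g$. For $X_g$ the same computation applies, using $\tilde{m}(\tilde{\psi}^\ast\tilde{\psi} \otimes \tilde{\psi}^\ast\tilde{\psi})\tilde{m}^\ast = \tilde{\psi}^\ast\tilde{\psi}$. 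This identity holds because $\tilde{\psi}^\ast = 1$ and
\begin{align*}
    (\tilde{\psi}\otimes\tilde{\psi})\tilde{m}^\ast = \tilde{\psi}
\end{align*}
by Proposition \ref{pre::prop:properties_of_counit_and_comultiplication}. The phase sum $\sum_\xi \overline{\xi(g)}\,\overline{\xi^{-1}\chi(g)} = n\overline{\chi(g)}$ then gives idempotence. Alternatively, $X_g$ can be treated as the case $\tilde{B}$ trivial with $\tilde{A}_h = \delta_{h=g}$.

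\textbf{$\ast$-preservation.} This step is where the care lies. Using $(b u_\chi)^\ast = \hat{\alpha}_{\chi^{-1}}(b^\ast) u_{\chi^{-1}}$, we get
\begin{align*}
    A((bu_\chi)^\ast) = \hat{A}_{\chi^{-1}}(\hat{\alpha}_{\chi^{-1}}(b^\ast)) u_{\chi^{-1}}, \qquad A(bu_\chi)^\ast = \hat{\alpha}_{\chi^{-1}}(\hat{A}_\chi(b)^\ast) u_{\chi^{-1}}.
\end{align*}
Since $\overline{\chi(g)}$ is the conjugate of $\chi^{-1}(g)$, and each $\tilde{A}_g$ is $\ast$-preserving, we have $\hat{A}_\chi(b)^\ast = \hat{A}_{\chi^{-1}}(b^\ast)$. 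Commuting $\hat{\alpha}$ past $\hat{A}_{\chi^{-1}}$ then gives equality of the two expressions. The same argument applies to $X_g$, because $\tilde{\psi}$ is positive and hence hermitian.

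The main obstacle is keeping the twisting by $\hat{\alpha}$ straight in the convolution formula, namely the move of $u_\xi$ past $\hat{\alpha}_{\xi^{-1}}(\cdot)$. This works precisely because the $T_\chi$ commute with $\hat{\alpha}$, which is exactly where condition \eqref{der::eq:qvoltage_graph_invariance_under_group_action} is needed.
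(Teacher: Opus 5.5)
Your proof is correct, but it is organized differently from the paper's.

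\textbf{The paper's route.} It handles $X_g$ through its rank-one decomposition and the identity $(u_\chi^\dagger\otimes u_\xi^\dagger)m^\ast=u_{\chi\xi}^\dagger$. This gives Schur idempotence and also the orthogonality $m(X_g\otimes X_h)m^\ast=\delta_{g=h}X_g$. It then proves Schur idempotence of $A$ structurally:
\begin{enumerate}
\item it expands $m(A\otimes A)m^\ast$ by (co)associativity;
\item it regroups the tensor legs using the commutation \eqref{der::eq:commutation_in_derived_graph};
\item it discards the cross terms by the orthogonality of the $X_g$;
\item it shows $E_\rtimes^\ast\tilde A_gE_\rtimes$ is Schur idempotent via $m(E_\rtimes^\ast\otimes E_\rtimes^\ast)=E_\rtimes^\ast\tilde m$ and $(E_\rtimes\otimes E_\rtimes)m^\ast=\tilde m^\ast E_\rtimes$.
\end{enumerate}

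\textbf{Your route.} You go straight to the block-diagonal form $T(bu_\chi)=T_\chi(b)u_\chi$ and prove a convolution formula for Schur products. After that, $X_g$ and $A$ are both handled by the same Fourier computation on $\hat G$. This is essentially the mechanism the paper introduces only later, in Lemma \ref{lemma:technical_properties_of_Ag_Xg} (formula \eqref{eq:convolution_formula_for_A_chi}), where it is run in the converse direction for the Gross--Tucker theorem.

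\textbf{What each buys.} The paper's argument exhibits $A$ as a sum of intersections of the Schur-orthogonal graphs $X_g$ with the lifted graphs $E_\rtimes^\ast\tilde A_gE_\rtimes$, which matches its graph-theoretic intuition. Yours is more uniform and avoids the four-leg rearrangement. It is also more complete on $\ast$-preservation. The paper infers that $A$ is $\ast$-preserving simply because $X_g$ and $E_\rtimes^\ast\tilde A_gE_\rtimes$ are. On a noncommutative quantum set, however, a Schur product of $\ast$-preserving maps need not be $\ast$-preserving; in the tracial case one needs $m(S\otimes T)m^\ast=m(T\otimes S)m^\ast$. Your identity $\hat A_\chi(b)^\ast=\hat A_{\chi^{-1}}(b^\ast)$, combined with $\hat\alpha$-equivariance, settles this directly.

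\textbf{A small imprecision.} In your alternative remark, $X_g$ is not the case of a trivial $\tilde B$. It is the derived graph of the voltage graph $\tilde A_h=\delta_{h=g}\,\tilde\psi^\ast\tilde\psi$ on $(\tilde B,\tilde\psi)$, which is what your main computation actually uses.
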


\begin{proof}
    Let us first fix some $g \in G$ and prove that $X_g$ is a quantum adjacency matrix. For any $\chi, \xi \in \hat{G}$ one has
    \begin{align*}
        (u_\chi^\dagger \otimes u_\xi^\dagger) m^\ast = \left( m (u_\chi \otimes u_\xi) \right)^\dagger = u_{\chi \xi}^\dagger.
    \end{align*}
    Therefore, we obtain
    \begin{align*}
        m(X_g \otimes X_g) m^\ast
            &= \frac{1}{n^2} \sum_{\chi, \xi \in \hat{G}} \overline{\chi(g)} \, \overline{\xi(g)} \, m\left( (u_\chi \otimes u_\chi^\dagger) \otimes (u_\xi \otimes u_\xi^\dagger) \right) m^\ast \\
            &= \frac{1}{n^2} \sum_{\chi, \xi \in \hat{G}} \overline{\chi(g)} \, \overline{\xi(g)} \, u_{\chi \xi} \otimes u_{\chi \xi}^\dagger \\
            &= X_g,
    \end{align*}
    and this shows that $X_g$ is Schur-idempotent. What's more, it is an easy observation that a very similar calculation yields
    \begin{align*}
        m(X_g \otimes X_h) m^\ast = 0
    \end{align*}
    if $g \neq h$.
    It remains to show that $X_g$ is $\ast$-preserving. For any $b \in B$ and $\chi \in \hat{G}$ one first observes
    \begin{align*}
        u_\chi^\dagger(b^\ast) = \langle u_\chi, b^\ast \rangle_{\psi_\rtimes}
            = \overline{\langle u_\chi^\ast, b \rangle_{\psi_\rtimes}}
            = \overline{u_{\chi^{-1}}^\dagger (b)}.  
    \end{align*}
    This yields
    \begin{align*}
        X_g(b^\ast) &= \frac{1}{n} \sum_{\chi \in \hat{G}} \overline{\chi(g)} u_\chi \, u_\chi^\dagger(b^\ast) 
            = \frac{1}{n} \sum_{\chi \in \hat{G}} \chi^{-1}(g) u_{\chi^{-1}}^\ast \, \overline{u_{\chi^{-1}}^\dagger(b)} \\
            &= \left( \frac{1}{n} \sum_{\chi \in \hat{G}} \overline{\chi^{-1}(g)} u_{\chi^{-1}} \, u_{\chi^{-1}}^\dagger(b) \right)^\ast
            = X_g(b)^\ast,
    \end{align*}
    which is exactly the desired property. Thus, $X_g$ is a quantum adjacency matrix.

    Finally, let us prove that $A$ is a quantum adjacency matrix. Using the (co)associativity of $m$ (and $m^\ast$) as well as Lemma \ref{der::lemma:commutation_in_derived_graph}, we obtain
    \begin{align*}
        m(A \otimes A) m^\ast
            &= \frac{1}{n^2} \sum_{g, h \in G} m(m(X_g \otimes X_h)m^\ast \otimes m(E_\rtimes^\ast \tilde{A}_g E_\rtimes \otimes E_\rtimes^\ast \tilde{A}_h E_\rtimes)) m^\ast \\
            &= \frac{1}{n^2} \sum_{g\in G} m(X_g \otimes m(E_\rtimes^\ast \tilde{A}_g E_\rtimes \otimes E_\rtimes^\ast \tilde{A}_g E_\rtimes) m^\ast)m^\ast,
    \end{align*}
    since $m(X_g \otimes X_h) m^\ast = \delta_{g=h} X_g$. It is not hard to see $m(E_\rtimes^\ast \otimes E_\rtimes^\ast) = E_\rtimes^\ast m$. Further, one has
    \begin{align*}
        (E_\rtimes \otimes E_\rtimes) m^\ast = m^\ast E_\rtimes
    \end{align*}
    since for all $b u_\chi \in B$ it is
    \begin{align*}
        (E_\rtimes \otimes E_\rtimes) m^\ast(b u_\chi)
            &= \frac{1}{n} \sum_{\xi \in \hat{G}} E_\rtimes(b_i^{(1)} u_\xi) \otimes E_\rtimes(\hat{\alpha}_{\xi^{-1}}(b_i^{(2)}) u_{\xi^{-1} \chi}) \\
            &= n \delta_{\chi = 1} b_i^{(1)} \otimes b_i^{(2)}  \\
            &= n \delta_{\chi = 1} m^\ast(b) \\
            &= m^\ast E_\rtimes(b u_\chi).
    \end{align*}
    Thus, we have
    \begin{align*}
        m(E_\rtimes^\ast \tilde{A}_g E_\rtimes \otimes E_\rtimes^\ast \tilde{A}_g E_\rtimes) m^\ast
            &= E_\rtimes^\ast m(\tilde{A}_g \otimes \tilde{A}_g) m^\ast E_\rtimes
            = E_\rtimes^\ast \tilde{A}_g E_\rtimes,
    \end{align*}
    where we use that $\tilde{A}_g$ is a quantum adjacency matrix. Putting everything together, it follows that $A$ is Schur-idempotent. As both the $X_g$ and the maps $E_\rtimes^\ast \tilde{A}_g E_\rtimes$ are $\ast$-preserving, one obtains that $A$ is $\ast$-preserving as well.
\end{proof}

\subsection{Properties of the derived quantum graph}

In this section we study three properties of the derived quantum graph $\tilde{A} \rtimes_{\hat{\alpha}} \hat{G}$ relative to properties of the voltage quantum graph $\tilde{A}$. These properties are: being loopfree, being undirected and being $d$-regular for some $d \geq 0$. The definitions of these properties are well-known in the literature see e.g. \cite{musto_compositional_2018,matsuda_classification_2022}. If $\psi$ is a non-tracial functional there is an ambiguity in the definition of undirectedness but we will simply stick to the same definition as for the tracial case. An alternative notion can be found in \cite{wasilewski_quantum_2024}, see also \cite{daws_quantum_2024}.

\begin{definition}
    Let $A$ be a quantum graph on a quantum set $(B, \psi)$. We call the quantum graph
    \begin{enumerate}
        \item \emph{loopfree} if $m(A \otimes \mathrm{Id})m^\ast = 0$,
        \item \emph{undirected} if $A = A^\ast$ where the adjoint is taken with respect to the inner product induced by $\psi$, and
        \item \emph{$d$-regular} for some $d \geq 0$ if $A(1) = d$.
    \end{enumerate}
    We say that a voltage quantum graph $\tilde{A} = (\tilde{A}_g)_{g \in G}$ has these properties if the same equations hold for $\sum_{g \in G} \tilde{A}_g$, respectively.
\end{definition}

Though $\sum_{g \in G} \tilde{A}_g$ is not a quantum adjacency matrix it makes sense to think of this matrix as the adjacency matrix of a quantum multigraph. We don't make this notion precise here but refer to the discussion in \cite[Section 2]{gromada_examples_2022}. For more information on regular quantum graphs we refer to \cite{matsuda_algebraic_2024}.

The following proposition relates properties of a derived quantum graph $\tilde{A} \rtimes_{\hat{\alpha}} \hat{G}$ to properties of the underlying voltage quantum graph $\tilde{A}$. 

\begin{proposition}\label{prop:voltage_to_derived_properties}
    Let $\tilde{A} = (\tilde{A}_g)_{g \in G}$ be a voltage quantum graph on a quantum set $(\tilde{B}, \tilde{\psi})$ with respect to an action $\hat{\alpha}$ of a finite abelian group $\hat{G}$. Then the derived quantum graph $\tilde{A} \rtimes_{\hat{\alpha}} \hat{G}$ is
    \begin{enumerate}
        \item loopfree if $\tilde{A}_1$ is loopfree,
        \item undirected if $\tilde{A}_g^\ast = \tilde{A}_{g^{-1}}$ holds for all $g \in G$, and
        \item $d$-regular if $\tilde{A}$ is $d$-regular, i.e. if $\sum_{g \in G} \tilde{A}_g(1) = d$.
    \end{enumerate}
\end{proposition}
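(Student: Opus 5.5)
The plan is to reduce all three properties to the explicit formula \eqref{der::eq:formula_for_m(X_g_otimes_EstarAE)mstar} from Lemma \ref{der::lemma:commutation_in_derived_graph}. It tells us that $A := \tilde{A} \rtimes_{\hat{\alpha}} \hat{G}$ acts by
\begin{align*}
    A(b u_\chi) = \sum_{g \in G} \overline{\chi(g)} \tilde{A}_g(b) u_\chi
\end{align*}
for $b \in \tilde{B}$ and $\chi \in \hat{G}$. So $A$ preserves each "fibre" $\tilde{B} u_\chi$ and acts there by the operator $\tilde{A}^{(\chi)} := \sum_g \overline{\chi(g)} \tilde{A}_g$. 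Each property is then checked fibrewise, combining this formula with Lemma \ref{land::lemma:description_of_crossed_product}.

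For (c), we evaluate at $1 = 1 u_1$. Since $\chi = 1$, this gives $A(1) = \sum_g \tilde{A}_g(1) u_1 = d$, so nothing more is needed.

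For (b), we compute $\langle b u_\chi, A(c u_\xi) \rangle_{\psi_\rtimes}$ using \eqref{land::eq:formula_for_scalar_product_on_crossed_product_algebra}. This yields
\begin{align*}
    n \delta_{\chi=\xi} \sum_g \overline{\chi(g)} \langle b, \tilde{A}_g(c) \rangle_{\tilde{\psi}}
    = n \delta_{\chi=\xi} \sum_g \overline{\chi(g)} \langle \tilde{A}_{g^{-1}}(b), c \rangle_{\tilde{\psi}},
\end{align*}
where the second form uses $\tilde{A}_g^\ast = \tilde{A}_{g^{-1}}$. We then reindex $g \mapsto g^{-1}$ and use $\overline{\chi(g^{-1})} = \chi(g)$, so the expression becomes $\langle \sum_g \overline{\chi(g)} \tilde{A}_g(b) u_\chi, c u_\xi \rangle = \langle A(b u_\chi), c u_\xi \rangle$. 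Hence $A^\ast = A$.

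For (a), loopfreeness, we must compute $m(A \otimes \mathrm{Id}) m^\ast(b u_\chi)$ using \eqref{land::eq:comultiplication_formula_on_crossed_product}. Writing $\tilde{m}^\ast(b) = \sum_i b_i^{(1)} \otimes b_i^{(2)}$, this equals
\begin{align*}
    \frac{1}{n} \sum_{\xi} \sum_g \overline{\xi(g)} \, \tilde{A}_g(b_i^{(1)}) \, u_\xi \, \hat{\alpha}_{\xi^{-1}}(b_i^{(2)}) u_{\xi^{-1}\chi}
    = \sum_g \Bigl( \frac{1}{n} \sum_\xi \overline{\xi(g)} \Bigr) \tilde{A}_g(b_i^{(1)}) b_i^{(2)} u_\chi.
\end{align*}
The main point is that $u_\xi \hat{\alpha}_{\xi^{-1}}(b_i^{(2)}) u_{\xi^{-1}} = b_i^{(2)}$, so the $\xi$-dependence reduces to the character sum. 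By orthogonality of characters, $\frac{1}{n}\sum_\xi \overline{\xi(g)} = \delta_{g=1}$, and only $g = 1$ survives. The result is $\tilde{m}(\tilde{A}_1 \otimes \mathrm{Id}) \tilde{m}^\ast(b) \, u_\chi$, which vanishes when $\tilde{A}_1$ is loopfree.

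The only delicate step is this commutation in (a): one has to check carefully that the crossed-product multiplication rule collapses the product exactly to $b_i^{(2)} u_\chi$. The remaining steps are direct bookkeeping with Lemma \ref{land::lemma:description_of_crossed_product}.
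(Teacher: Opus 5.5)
Your proof is correct. Part (c) is exactly the paper's argument, but for (a) and (b) you take a different route.

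\textbf{Your route.} You read off from Lemma~\ref{der::lemma:commutation_in_derived_graph} that $A$ is block-diagonal with respect to the decomposition $B=\bigoplus_{\chi}\tilde{B}u_\chi$, which is orthogonal by Lemma~\ref{land::lemma:description_of_crossed_product}. On $\tilde{B}u_\chi$ it acts by $\sum_g\overline{\chi(g)}\tilde{A}_g$, and you then check every property fibrewise.

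\textbf{The paper's route.} It works with operator identities for the $X_g$. In (a) it uses (co)associativity and the commutation identity \eqref{der::eq:commutation_in_derived_graph} to rewrite $m(A\otimes\mathrm{Id})m^\ast$ as $\sum_g m(E_\rtimes^\ast\tilde{A}_gE_\rtimes\otimes m(X_g\otimes\mathrm{Id})m^\ast)m^\ast$. It then shows $m(X_g\otimes\mathrm{Id})m^\ast=\delta_{g=1}\mathrm{Id}$. In (b) it combines $X_g^\ast=X_{g^{-1}}$ with $(m(S\otimes T)m^\ast)^\ast=m(S^\ast\otimes T^\ast)m^\ast$.

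\textbf{Comparison.} Both routes rest on the same two facts: the collapse $u_\xi\hat{\alpha}_{\xi^{-1}}(c)u_{\xi^{-1}\chi}=cu_\chi$ inside the comultiplication formula, and orthogonality of characters. The paper's version matches the picture of $A$ as a sum of ``intersections'' of $X_g$ with $E_\rtimes^\ast\tilde{A}_gE_\rtimes$: only $X_1$ carries loops, and taking adjoints swaps the labels $g$ and $g^{-1}$. Yours needs a single formula and no (co)associativity bookkeeping.

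Your fibrewise description also makes the converse of (b) immediate. $A^\ast=A$ holds iff every block $\sum_g\overline{\chi(g)}\tilde{A}_g$ is self-adjoint with respect to $\tilde{\psi}$. By Fourier inversion over $\hat{G}$, this is equivalent to $\tilde{A}_g^\ast=\tilde{A}_{g^{-1}}$ for all $g$. The converses of (a) and (c) follow from either proof, since both yield the exact identities $m(A\otimes\mathrm{Id})m^\ast(bu_\chi)=\tilde{m}(\tilde{A}_1\otimes\mathrm{Id})\tilde{m}^\ast(b)u_\chi$ and $A(1)=\sum_g\tilde{A}_g(1)$.
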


\begin{proof}
    Let $A := \tilde{A} \rtimes_{\hat{\alpha}} \hat{G}$.

    Ad (a). Using the definition of $A$ together with co-associativity of $m$ and \eqref{der::eq:commutation_in_derived_graph} we get
    \begin{align*}
        m(A \otimes \mathrm{Id}) m^\ast
            &= \sum_{g \in G} m(E_\rtimes^\ast \tilde{A}_g E_\rtimes \otimes m(X_g \otimes \mathrm{Id})m^\ast) m^\ast.
    \end{align*}
    A simple calculation using the comultiplication formula \eqref{land::eq:comultiplication_formula_on_crossed_product} reveals for $b \in \tilde{B}$ and $\chi \in \hat{G}$ that
    \begin{align*}
        m(X_g \otimes \mathrm{Id}) m^\ast(b u_\chi)
            &= \frac{1}{n} \sum_{\zeta \in \hat{G}} X_g(b_i^{(1)} u_\zeta) \hat{\alpha}_{\zeta^{-1}}(b_i^{(2)}) u_{\zeta^{-1} \chi} \\
            &= \frac{1}{n} \sum_{\zeta \in \hat{G}} \overline{\zeta(g)} \tilde{\psi}(b_i^{(1)}) b_i^{(2)} u_{\chi} 
            = \delta_{g=1} b u_\chi.
    \end{align*}
    Thus, the above expression becomes
    \begin{align*}
        m(A \otimes \mathrm{Id}) m^\ast
            &= m(E_\rtimes^\ast \tilde{A}_1 E_\rtimes \otimes \mathrm{Id}) m^\ast.
    \end{align*}
    Using again formula \eqref{land::eq:comultiplication_formula_on_crossed_product} one gets in a similar way
    \begin{align*}
        m(E_\rtimes^\ast \tilde{A}_1 E_\rtimes \otimes \mathrm{Id}) m^\ast(b u_\chi)
            &= \tilde{A}_1(b_i^{(1)}) b_i^{(2)} u_\chi \\
            &= \tilde{m}(\tilde{A}_1 \otimes \mathrm{Id}) \tilde{m}^\ast(b) u_\chi.
    \end{align*}
    The statement follows immediately.

    Ad (b). First it is an easy observation that $X_g^\ast = X_{g^{-1}}$ since
    \begin{align*}
        X_g^\ast = \frac{1}{n} \sum_{\chi \in \hat{G}} (\overline{\chi(g)} u_\chi u_\chi^\dagger)^\ast
            = \frac{1}{n} \sum_{\chi \in \hat{G}} \chi(g) u_\chi u_\chi^\dagger
            = X_{g^{-1}}.
    \end{align*}
    Then we have
    \begin{align*}
        A^\ast = \sum_{g \in G} m(X_g^\ast \otimes E_\rtimes^\ast \tilde{A}_g^\ast E_\rtimes) m^\ast,
    \end{align*}
    which shows that $\tilde{A}_g^\ast = \tilde{A}_{g^{-1}}$ for all $g \in G$ entails that $A^\ast = A$.

    Ad (c). Using \eqref{der::eq:formula_for_m(X_g_otimes_EstarAE)mstar} from Lemma \ref{der::lemma:commutation_in_derived_graph} it is a simple observation that
    \begin{align*}
        A(1) = \sum_{g \in G} \tilde{A}_g(1) = d
    \end{align*}
    holds if $\tilde{A}$ is $d$-regular.
\end{proof}

\subsection{Connection to Gross and Tucker's construction}

Let us now compare our definition of voltage quantum graphs and their derived quantum graphs with Gross and Tucker's classical theory. By Example \ref{pre::ex:classical_graphs_as_quantum_graphs} a classical (simple, directed) graph with vertex set $V$ corresponds to a quantum graph on the quantum set $(\C^V, \psi_V)$. We will see that in the same vein, voltage quantum graphs on $(\C^V, \psi_V)$ with respect to the trivial action of $\hat{G}$ correspond to classical voltage graphs, and the derived quantum graph corresponds to Gross and Tucker's derived graph.

A classical directed (not necessarily simple) graph $\Gamma$ is given by a (finite) vertex set $V$ and a (finite) edge set $E$ together with maps $s, t: E \to V$ which assign to every edge its starting vertex and terminal vertex, respectively. If $s(e) = t(e)$, then the edge $e$ is a loop. If $s(e) = v \in V$ and $t(e) = u \in V$, we suggestively write $e: v {\to} u$.
Let us recall the following definitions from Gross and Tucker.

\begin{definition}[{\cite[Section 2.1.1]{gross_topological_1987}}]
    A \emph{voltage graph} is a pair $(\Gamma, \lambda)$ consisting of a (not necessarily simple) finite directed graph $\Gamma$ with vertex set $V$ and edge set $E$, together with a labeling $\lambda: E \to G$ of the edges of $\Gamma$ by elements of a finite group $G$.

    The \emph{(right) derived graph} $\Gamma^\lambda$ of $(\Gamma, \lambda)$ has vertex set $V \times G$ and edge set $E \times G$. Whenever $e \in E$ is an edge from $u$ to $v$ in $\Gamma$ and $g \in G$ is a group element, then the edge $(e,g)$ goes from $(u,g)$ to $(v, g \lambda(e))$ in $\Gamma^\lambda$, i.e.
    \begin{align*}
        e: u \to v \text{ in } \Gamma \text{ and } g \in G
        \quad \implies \quad
        (e,g): (u,g) \to (v, g \lambda(e)) \text{ in } \Gamma^\lambda.
    \end{align*}
\end{definition}

In the above definition neither the voltage graph nor the derived graph need to be simple.
However, our definition of a voltage quantum graph has been designed so that the derived quantum graph is always simple in the sense that the derived quantum graph is given by a single quantum adjacency matrix. This is intended to reduce technical complications and because one is mostly interested in simple quantum graphs. However, Gross and Tucker's construction does not make similar provisions. Therefore, we can only hope to recover their construction in the case where the derived graph is simple. The next proposition characterizes when this is the case.

\begin{lemma}
    \label{der::lemma:presimple_voltage_graph}
    The derived graph $\Gamma^\lambda$ of a voltage graph $(\Gamma, \lambda)$ is simple if and only if for any two vertices $u, v \in V$ and group element $g \in G$ there is at most one edge $e: u \to v$ that is labeled by $g$.
\end{lemma}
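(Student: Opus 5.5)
The plan is to write down explicitly which edges of $\Gamma^\lambda$ join two given vertices and to observe that they correspond bijectively to edges of $\Gamma$ with a prescribed endpoint pair and a prescribed label. Here ``simple'' means there is at most one edge between any ordered pair of vertices, so that $\Gamma^\lambda$ is described by a $0/1$ adjacency matrix as in Example \ref{pre::ex:classical_graphs_as_quantum_graphs}. Loops are allowed, since our quantum adjacency matrices may have loops.

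Fix two vertices $(u,g)$ and $(v,h)$ of $\Gamma^\lambda$. By the definition of the derived graph, an edge $(e,k) \in E \times G$ goes from $(u,g)$ to $(v,h)$ exactly when three conditions hold: $e: u \to v$ in $\Gamma$, $k = g$, and $g\lambda(e) = h$. The last condition is equivalent to $\lambda(e) = g^{-1}h$. Hence the map $(e,g) \mapsto e$ is a bijection from the set of edges $(u,g) \to (v,h)$ in $\Gamma^\lambda$ onto the set
\begin{align*}
    \{ e \in E \mid e: u \to v, \ \lambda(e) = g^{-1}h \}.
\end{align*}

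Both directions follow from this bijection. Suppose that for all $u,v$ and every label $k \in G$ there is at most one edge $e: u \to v$ with $\lambda(e) = k$. Taking $k = g^{-1}h$ shows that there is at most one edge $(u,g) \to (v,h)$, so $\Gamma^\lambda$ is simple. Conversely, suppose two distinct edges $e \neq e'$, both $u \to v$, carry the same label $k$. Then $(e,1)$ and $(e',1)$ are two distinct edges from $(u,1)$ to $(v,k)$, so $\Gamma^\lambda$ is not simple.

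No step here is a genuine obstacle. The only point needing care is that $g^{-1}h$ ranges over all of $G$ as $h$ varies, which is what makes the converse direction work with the choice $g = 1$, $h = k$.
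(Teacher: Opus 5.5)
Your proposal is correct and follows essentially the paper's argument: an edge of $\Gamma^\lambda$ from $(u,g)$ to $(v,h)$ must be $(e,g)$ with $e: u \to v$ and $\lambda(e) = g^{-1}h$, so parallel edges in $\Gamma^\lambda$ come from distinct equally-labeled parallel edges in $\Gamma$. Your explicit bijection also proves the converse direction, via the two edges $(e,1)$ and $(e',1)$ from $(u,1)$ to $(v,k)$; the paper's contrapositive argument only treats the first direction and leaves the converse implicit.
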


\begin{proof}
    Assume that $\Gamma^\lambda$ is not simple, and let $(e,g), (f,h)$ be two edges that have the same starting vertex and the same terminal vertex in $\Gamma^\lambda$. It is not difficult to check that this implies that $g=h$ and that $e$ and $f$ have the same starting vertex and the same terminal vertex in $\Gamma$ as well. Further, they must be labeled by the same group element. Thus, 
    the statement follows by contraposition.
\end{proof}

We call a voltage graph \emph{pre-simple} if it satisfies the assumption of Proposition~\ref{der::lemma:presimple_voltage_graph}.
 
\begin{proposition}
    \label{der::prop:classical_voltage_graphs_as_quantum_voltage_graphs}
    Let $G$ be a finite abelian group and $(\C^V, \psi_V)$ the quantum set from Example \ref{pre::ex:classical_graphs_as_quantum_graphs} for some finite set $V$.

    Given a pre-simple voltage graph $(\Gamma, \lambda)$ on $V$ with labeling $\lambda:E \to G$, let $\tilde{A}_g: \C^V \to \C^V$ be the adjacency matrix of the (simple, directed) subgraph of $\Gamma$ that contains all edges from $\Gamma$ that are labeled by $g$. Then the map
    \begin{align*}
        (\Gamma, \lambda) \mapsto \tilde{A} = (\tilde{A}_{g})_{g \in G}
    \end{align*}
    is a one-to-one correspondence between $G$-labeled pre-simple voltage graphs on $V$ and voltage quantum graphs on $(\C^V, \psi_V)$ with respect to the trivial action on $(\C^V, \psi_V)$.
\end{proposition}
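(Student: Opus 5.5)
The plan is to build an explicit inverse map and check that both maps are well defined. Nearly all of the content is in Example \ref{pre::ex:classical_graphs_as_quantum_graphs}: quantum adjacency matrices on $(\C^V, \psi_V)$ are exactly $\{0,1\}$-matrices in $M_V$, that is, adjacency matrices of simple directed graphs on $V$ (loops allowed, at most one edge $u \to v$).

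\emph{Well-definedness of the forward map.} Let $(\Gamma, \lambda)$ be pre-simple and fix $g \in G$. Consider the subgraph $\Gamma_g$ consisting of all edges with label $g$. By the definition of pre-simplicity (Lemma \ref{der::lemma:presimple_voltage_graph}), $\Gamma_g$ has at most one edge $u \to v$ for every pair $u, v \in V$. Hence $\Gamma_g$ is simple and has an adjacency matrix $\tilde{A}_g \in \{0,1\}^{V \times V}$. By Example \ref{pre::ex:classical_graphs_as_quantum_graphs}, $\tilde{A}_g$ is a quantum adjacency matrix on $(\C^V, \psi_V)$. Condition \eqref{der::eq:qvoltage_graph_invariance_under_group_action} holds trivially because each $\hat{\alpha}_\chi$ is the identity, as already noted in Example \ref{der::ex:classical_voltage_graphs_as_quantum_voltage_graphs}. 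So $(\tilde{A}_g)_{g \in G}$ is a voltage quantum graph with respect to the trivial action.

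\emph{Inverse map.} Start from a voltage quantum graph $(\tilde{A}_g)_{g \in G}$ for the trivial action. Again by Example \ref{pre::ex:classical_graphs_as_quantum_graphs}, each $\tilde{A}_g$ is a $\{0,1\}$-matrix. Define $\Gamma$ to have vertex set $V$ and edge set
\begin{align*}
    E := \{(u,v,g) \in V \times V \times G : (\tilde{A}_g)_{vu} = 1\},
\end{align*}
with $s(u,v,g)=u$, $t(u,v,g)=v$ and $\lambda(u,v,g)=g$. (The index convention $(\tilde{A}_g)_{vu}$ versus $(\tilde{A}_g)_{uv}$ should be chosen to match the one in Example \ref{pre::ex:classical_graphs_as_quantum_graphs}.) For fixed $u, v, g$ there is at most one such edge, so $(\Gamma, \lambda)$ is pre-simple.

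\emph{Mutual inverses.} Starting from $\tilde{A}$, taking the label-$g$ subgraph of the constructed $\Gamma$ returns exactly the matrix $\tilde{A}_g$. Conversely, starting from $(\Gamma, \lambda)$, the map $e \mapsto (s(e), t(e), \lambda(e))$ is injective by pre-simplicity. Therefore it identifies $(\Gamma, \lambda)$ with the reconstructed labeled graph, compatibly with $s$, $t$ and $\lambda$.

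The only subtle point is what ``one-to-one correspondence'' means. Multigraphs can only be recovered up to isomorphism of labeled graphs that fixes $V$, that is, up to renaming edges. The statement should be read with that identification, and the injectivity of $e \mapsto (s(e),t(e),\lambda(e))$ is exactly what makes it work. Everything else reduces directly to Example \ref{pre::ex:classical_graphs_as_quantum_graphs}.
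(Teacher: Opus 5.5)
Your proposal is correct and follows the same route as the paper, which argues in two lines: for the trivial action, a voltage quantum graph is just a $G$-indexed family of quantum adjacency matrices, and on $(\C^V,\psi_V)$ these are exactly adjacency matrices of simple directed graphs on $V$. You additionally write down the inverse construction explicitly and correctly note that the correspondence holds only up to renaming edges, which the paper leaves implicit.
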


\begin{proof}
    As observed in Example \ref{der::ex:classical_voltage_graphs_as_quantum_voltage_graphs} a voltage quantum graph with respect to the trivial action is nothing else than a family $(\tilde{A}_g)_{g \in G}$ of quantum adjacency matrices on some quantum set. Quantum adjacency matrices on $(\C^V, \psi_V)$ are the same as adjacency matrices of (simple, directed) graphs on $V$ by Example \ref{pre::ex:classical_graphs_as_quantum_graphs}. Thus, it is evident that every pre-simple classical voltage graph can be interpreted as a voltage quantum graph on $(\C^V, \psi_V)$ as in the statement, and vice versa.
\end{proof}

\begin{lemma}
    \label{der::lemma:identification_of_derived_graph_components_classical_case}
    Let $(\C^V, \psi_V)$ be the quantum set from Example \ref{pre::ex:classical_graphs_as_quantum_graphs}. Further, let $G$ be a finite abelian group with the trivial action $\mathrm{triv}: \hat{G} \to \mathrm{Aut}(\C^V, \psi_V)$. The following are true:
    \begin{enumerate}
        \item The assignment from the generators of $\C^V \rtimes_{\mathrm{triv}} \hat{G}$ into $C(V \times G)$ given by
        \begin{align*}
            \begin{aligned}
                e_v \mapsto \sum_{g \in G} e_{v,g}, \qquad
                u_\chi \mapsto \sum_{v \in V, g \in G} \chi(g) e_{v,g}
            \end{aligned}
        \end{align*}
        extends to an isomorphism
        \begin{align*}
            (\C^V, \psi_V) \rtimes_{\mathrm{triv}} \hat{G} \cong (\C^{V \times G}, \psi_{V \times G}),
        \end{align*}
        where the quantum set on the right-hand side is associated to the classical finite set $V \times G$ as in Example \ref{pre::ex:classical_graphs_as_quantum_graphs}, and we write $e_v$, $e_{v, g}$ for indicator functions in $C(V)$ and $C(V \times G)$, respectively.
        \item The quantum graph $X_g: B \to B$ with $g \in G$ from Definition \ref{der::def:derived_quantum_graph} corresponds to the classical graph on $V \times G$ that contains all edges of the form
        \begin{align*}
            (v, h) \to (w, h g)
            \qquad
            \text{ for } v, w \in V \text{ and } h \in G.
        \end{align*}
        \item The quantum graph $E_\rtimes^\ast \tilde{A}_g E_\rtimes: B \to B$ from Definition \ref{der::def:derived_quantum_graph} corresponds to the classical graph on $V \times G$ that contains all edges of the form
        \begin{align*}
            (v, h) \to (w, k)
            \qquad
            \text{ for } v \overset{\tilde{A}_g}{\longrightarrow} w \in V \text{ and } h, k \in G,
        \end{align*}
        where $v \overset{\tilde{A}_g}{\longrightarrow} w$ indicates that there is an edge from $v$ to $w$ in the classical graph corresponding to the adjacency matrix $\tilde{A}_g$.
        \item Let $A_1, A_2$ be two quantum adjacency matrices which correspond to classical graphs $\Gamma_1, \Gamma_2$ on $V \times G$ in the sense of Example \ref{pre::ex:classical_graphs_as_quantum_graphs}. Then, the map
        \begin{align*}
            A_3 := m(A_1 \otimes A_2) m^\ast
        \end{align*}
        is again a quantum adjacency matrix, and it corresponds to the classical graph
        \begin{align*}
            \Gamma_3 := \Gamma_1 \cap \Gamma_2,
        \end{align*}
        which is the intersection of $\Gamma_1$ and $\Gamma_2$, i.e. $\Gamma_3$ has vertex set $V \times G$ and its edge set is the intersection of the edge sets of $\Gamma_1$ and $\Gamma_2$.
    \end{enumerate}
\end{lemma}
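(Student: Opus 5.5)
For (a), I will first check that the assignment respects the relations of the crossed product. Since the action is trivial, $\C^V \rtimes_{\mathrm{triv}} \hat{G}$ is the universal algebra generated by commuting projections $e_v$ and a unitary representation $(u_\chi)$ that commutes with all $e_v$. The images $\sum_g e_{v,g}$ are mutually orthogonal projections summing to $1$, and the images $\sum_{v,g}\chi(g)e_{v,g}$ are unitaries with $\chi\xi\mapsto$ product. All images lie in a commutative algebra, so the relations hold. Alternatively, I will compose the isomorphism $\tilde{B}\rtimes_{\mathrm{triv}}\hat{G}\cong \tilde{B}\otimes C(G)$ of Example \ref{crossed::example:tensor_product_quantum_set} with the canonical identification $C(V)\otimes C(G)\cong C(V\times G)$. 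Under this route, preservation of the functionals is exactly the statement of that example, namely that $\psi_V\otimes\psi_G=\psi_{V\times G}$. Bijectivity follows either from the example or by a dimension count, using that the characters $\tilde u_\chi$ span $C(G)$.

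For (b) and (c), I will compute on the basis $e_{w,k}$ using the formulas in Lemma \ref{der::lemma:commutation_in_derived_graph}. Under (a), the element $e_{w,k}$ corresponds to $\frac1n\sum_\chi \overline{\chi(k)}\, e_w u_\chi$. By \eqref{der::eq:formula_for_X_g} and $\psi_V(e_w)=1$,
\[
X_g(e_{w,k})=\tfrac1n\sum_\chi \overline{\chi(kg)}\, u_\chi .
\]
Translated back, the coefficient of $e_{v,h}$ is $\frac1n\sum_\chi \chi(h)\overline{\chi(kg)}=\delta_{h=kg}$. Hence $X_g(e_{w,k})=\sum_v e_{v,kg}$, so the matrix entries equal $1$ exactly when the $G$-coordinates differ by $g$. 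I will then fix the direction convention. As in Example \ref{pre::ex:classical_graphs_as_quantum_graphs}, an edge $x\to y$ corresponds to a nonzero entry in the matrix of $A$, and I will match the orientation used by the paper, $(v,h)\to(w,hg)$, by transposing if needed. I expect this convention bookkeeping to be the only delicate point.

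For (c), I will use $E_\rtimes(e_w u_\chi)=n\delta_{\chi=1}e_w$ and $E_\rtimes^\ast(b)=b$, which gives
\[
E_\rtimes^\ast\tilde A_gE_\rtimes(e_{w,k})=\tilde A_g(e_w)=\sum_{v}(\tilde A_g)_{vw}\sum_h e_{v,h}.
\]
This is the graph that joins all $G$-coordinates whenever the $V$-coordinates are adjacent in $\tilde A_g$, as claimed.

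For (d), I will use the computation already recorded in Example \ref{pre::ex:classical_graphs_as_quantum_graphs}: on $(\C^{N},\psi_N)$, the map $m(A_1\otimes A_2)m^\ast$ is the entrywise (Schur) product. The entrywise product of two $0/1$ matrices is the $0/1$ matrix of the intersection of the edge sets, so it is again an adjacency matrix, and it corresponds to $\Gamma_1\cap\Gamma_2$. Since (a) is an isomorphism of quantum sets, it intertwines the multiplications and comultiplications, so this transfers to $B$.
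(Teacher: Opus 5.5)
Your proposal is correct and follows essentially the paper's proof. Part (a) goes through the tensor-product identification $\C^V\rtimes_{\mathrm{triv}}\hat G\cong \C^V\otimes C(G)\cong C(V\times G)$. Parts (b) and (c) evaluate on the basis to obtain $X_g(e_{w,k})=\sum_v e_{v,kg}$ and $E_\rtimes^\ast\tilde A_gE_\rtimes(e_{w,k})=\sum_v(\tilde A_g)_{vw}\sum_h e_{v,h}$, and (d) uses $m^\ast(e_x)=e_x\otimes e_x$, i.e.\ the Schur product.

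The one point to tighten is the orientation convention. It cannot be adjusted separately for (b) by transposing. It must be fixed once as $A(e_x)=\sum_{y:\,x\to y}e_y$, which is what the paper implicitly uses, and applied uniformly to $X_g$ and $\tilde A_g$. Under the transposed convention used uniformly, (b) would instead read $(v,h)\to(w,hg^{-1})$.
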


\begin{proof}
    For the first statement, it suffices to recall from Example \ref{crossed::example:tensor_product_quantum_set} that
    \begin{align*}
        (\C^V, \psi_V) \rtimes_{\mathrm{triv}} \hat{G} \cong (\C^V \otimes C(G), \psi_V \otimes \psi_G),
    \end{align*}
    where an isomorphism is given by
    \begin{align*}
        e_v u_\chi \mapsto \sum_{g \in G} \chi(g) e_{v,g}.
    \end{align*}

    For (b), let us identify $u_\chi$ with $\sum_{g \in G} \chi(g) e_{v,g}$ according to the above isomorphism. Then, one has for all $\chi \in \hat{G}, v \in V$ and $h \in G$
    \begin{align*}
        u_\chi^\dagger(e_{v,h})
            = \langle u_\chi, e_{v,h} \rangle_{\psi_{V \otimes G}}
            = \left\langle \sum_{w \in V, k \in G} \chi(k) e_{w,k}, e_{v,h} \right\rangle_{\psi_{V \otimes G}}
            = \overline{\chi(h)}.
    \end{align*}
    It follows
    \begin{align*}
        X_g(e_{v, h}) &= \frac{1}{n} \sum_{\chi \in \hat{G}} \overline{\chi(g)} u_\chi \, u_\chi^\dagger(e_{v, h})
            = \frac{1}{n} \sum_{\chi \in \hat{G}} \sum_{w \in V, k \in G} \overline{\chi(g h)} \chi(k) e_{w, k}
            = \sum_{w \in V} e_{w, h g},
    \end{align*}
    where we use in the last step that $\sum_{\chi \in \hat{G}} \chi(h^{-1} g^{-1} k) = n$ if $k = h g$ and $0$ otherwise. The claim follows directly.

    Next, let us prove (3). A short calculation suffices to verify
    \begin{align*}
        E_\rtimes(e_v \otimes e_h) = e_v,
        \quad \text{ and } \quad
        E_\rtimes^\ast(e_v) = e_v \otimes 1_{C(G)} = \sum_{k \in G} e_v \otimes e_k,
    \end{align*}
    where $E_\rtimes$ and $E_\rtimes^\ast$ are as in Notation \ref{pre::notation:crossed_product_quantum_sets}.
    It follows
    \begin{align*}
        E_\rtimes^\ast \tilde{A}_g E_\rtimes(e_v \otimes e_h) 
            = \tilde{A}_g(e_v) \otimes 1_{C(G)}
            = \sum_{w \sim_{\tilde{A}_g} v} \sum_{k \in G} e_w \otimes e_k,
    \end{align*}
    and this concludes the proof.

    Finally, (4) is an easy computation. Indeed, let $A_1, A_2$ be quantum adjacency matrices on the quantum set $(\C^{V}, \psi_{V})$ that correspond to classical graphs $\Gamma_1, \Gamma_2$ on $V$. Then, for any $v \in V$ one has
    \begin{align*}
        A_3(e_{v}) = m(A_1 \otimes A_2) m^\ast (e_{v})
            = A_1(e_v) A_2(e_v)
            = \sum_{w \sim_{\Gamma_1 \cap \Gamma_2} v} e_w,
    \end{align*}
    since the comultiplication on $\C^V$ is given $m^\ast(e_v) = e_v \otimes e_v$ by Example \ref{pre::ex:examples_of_quantum_sets}.
\end{proof}

Now, we are ready to see that our derived quantum graph construction generalizes the classical construction from pre-simple voltage graphs.

\begin{proposition}
    In the situation of Proposition \ref{der::prop:classical_voltage_graphs_as_quantum_voltage_graphs} one has for a pre-simple voltage graph $(\Gamma, \lambda)$ and the associated voltage quantum graph $\tilde{A} = (\tilde{A}_g)_{g \in G}$ the identification
    \begin{align*}
        \Gamma^\lambda = \tilde{A} \rtimes_{\mathrm{triv}} \hat{G}
    \end{align*}
    of the derived graphs, where $\Gamma^\lambda$ is Gross and Tucker's derived graph, $\tilde{A} \rtimes_{\mathrm{triv}} \hat{G}$ is the derived quantum graph from Definition \ref{der::def:derived_quantum_graph}, and we identify classical graphs on $V \times G$ with quantum graphs on $(\C^V, \psi_V) \rtimes_{\mathrm{triv}} \hat{G}$ according to Example \ref{pre::ex:classical_graphs_as_quantum_graphs}.
\end{proposition}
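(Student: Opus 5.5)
The plan is to work entirely on the classical side, using Lemma \ref{der::lemma:identification_of_derived_graph_components_classical_case}. By part (a) of that lemma, $(\C^V, \psi_V) \rtimes_{\mathrm{triv}} \hat{G} \cong (\C^{V \times G}, \psi_{V \times G})$, so the derived quantum graph $\tilde{A} \rtimes_{\mathrm{triv}} \hat{G}$ becomes a quantum adjacency matrix on a classical quantum set. By Example \ref{pre::ex:classical_graphs_as_quantum_graphs} it is therefore an honest $0/1$ adjacency matrix on $V \times G$. It then suffices to show that its edge set coincides with that of $\Gamma^\lambda$. The latter is simple by pre-simplicity (Lemma \ref{der::lemma:presimple_voltage_graph}), so it too is an adjacency matrix on $V \times G$.

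First, I fix $g \in G$ and identify each summand $m(X_g \otimes E_\rtimes^\ast \tilde{A}_g E_\rtimes) m^\ast$. Part (b) of the lemma says that $X_g$ is the graph with all edges $(v,h) \to (w, hg)$. Part (c) says that $E_\rtimes^\ast \tilde{A}_g E_\rtimes$ is the graph with all edges $(v,h) \to (w,k)$ such that $v \to w$ in $\tilde{A}_g$. Part (d) then gives that the summand is the intersection of these two graphs. Its edges are exactly $(v,h) \to (w,hg)$ with $v \to w$ an edge of $\Gamma$ labeled $g$.

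Next, I sum over $g \in G$. For a fixed pair $(v,h)$ and target $(w,k)$, only $g = h^{-1}k$ can contribute, so the edge sets of the summands for distinct $g$ are pairwise disjoint. Hence the sum is again a $0/1$ matrix, namely the adjacency matrix of the disjoint union of these edge sets. This union is precisely the set of edges $(v,h) \to (w, h\lambda(e))$ for $e: v \to w$ in $\Gamma$, which is the edge set of $\Gamma^\lambda$ with edge $(e,h)$. Pre-simplicity ensures that distinct $(e,h)$ give distinct edges, so no multiplicities are lost.

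The only delicate point is bookkeeping of conventions. One must check that the isomorphism in (a) matches the $V \times G$ coordinates used in (b) and (c), and that the direction convention for edges in the adjacency-matrix picture is the one used in the lemma. This is where I expect any friction to be. I would verify it by evaluating both sides on $e_{v,h}$ directly: the derived quantum graph should send $e_{v,h}$ to $\sum e_{w,h\lambda(e)}$, with the sum over edges $e: v \to w$ (or the transpose convention, consistently on both sides).
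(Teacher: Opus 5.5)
Your proposal is correct and is essentially the paper's proof. You pass to $V \times G$ via part (a) of Lemma~\ref{der::lemma:identification_of_derived_graph_components_classical_case}, identify each summand $m(X_g \otimes E_\rtimes^\ast \tilde{A}_g E_\rtimes) m^\ast$ as an intersection graph via (b)--(d), and sum over $g$; your disjointness remark ($g = h^{-1}k$) spells out what the paper calls immediate.

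Your final parenthetical is not quite right, because the literal equality depends on the direction convention. It holds under the convention $A(e_x) = \sum_{x \to y} e_y$ that is implicit in part (b), and there one does get $e_{v,h} \mapsto \sum_{e: v \to w} e_{w, h\lambda(e)}$. If you read both the $\tilde{A}_g$ and the derived matrix with the transposed convention, the same computation gives $\Gamma^{\lambda^{-1}}$ instead. That graph is only isomorphic to $\Gamma^\lambda$, via $(v,h) \mapsto (v,h^{-1})$, not equal to it.
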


\begin{proof}
    Evidently, the derived quantum graph $\tilde{A} \rtimes_{\mathrm{triv}} \hat{G}$ is defined on the quantum set 
    $
        (\C^V, \psi_V) \rtimes_{\mathrm{triv}} \hat{G},
    $
    and by Lemma \ref{der::lemma:identification_of_derived_graph_components_classical_case} (a) this quantum set is isomorphic to $(C(V \times G), \psi_{V \times G})$. Thus, the derived quantum graph corresponds to some classical graph on the set $V \times G$ (due to Example \ref{pre::ex:classical_graphs_as_quantum_graphs}). It remains to check that this classical graph is exactly Gross and Tucker's derived graph $\Gamma^\lambda$.

    Recall from Definition \ref{der::def:derived_quantum_graph} that
    \begin{align*}
        \tilde{A} \rtimes_{\mathrm{triv}} \hat{G} = \sum_{g \in G} m\left( X_g \otimes E_\rtimes^\ast \tilde{A}_g E_\rtimes \right) m^\ast.
    \end{align*}
    In view of Lemma \ref{der::lemma:identification_of_derived_graph_components_classical_case}, for any fixed $g \in G$ the map
    \begin{align*}
        m\left( X_g \otimes E_\rtimes^\ast \tilde{A}_g E_\rtimes \right) m^\ast
    \end{align*}
    corresponds to the classical graph on $V \times G$ that contains all edges of the form
    \begin{align*}
        (v, h) \to (w, hg)
        \qquad
        \text{ for } v \overset{\tilde{A}_g}{\longrightarrow} w \in V \text{ and } h \in G.
    \end{align*}
    It follows immediately that the derived quantum graph $\tilde{A} \rtimes_{\mathrm{triv}} \hat{G}$ corresponds to the classical graph on $V \times G$ that contains all edges of the form
    \begin{align*}
        (v, h) \to (w, h \lambda(e))
        \qquad
        \text{ for } e: v \to w \in \Gamma \text{ and } h \in G,
    \end{align*}
    and this is exactly Gross and Tucker's derived graph $\Gamma^\lambda$.
\end{proof}

Finally, let us illustrate the previous proposition on a concrete example.

\begin{example}
    Consider the graph $\Gamma$ with one vertex $v$ and one loop $e$ based at $v$. Let us label the loop with the generator $1_{\Z_3}$ of the group $\Z_3 = \{0_{\Z_3}, 1_{\Z_3}, 2_{\Z_3}\}$. Thus, we have a voltage graph $(\Gamma, \lambda)$ with $\lambda(e) = 1_{\Z_3}$. Then, the derived graph $\Gamma^\lambda$ has three vertices $v_0, v_1, v_2$ corresponding to the elements of $\Z_3$ and three edges as sketched in Figure \ref{der::fig:voltage_graph_example}.
    \begin{figure}[bt]
        \begin{tikzpicture}[
            node distance=1.5cm,
            vertex/.style={circle, fill, inner sep=1pt},
            >=Stealth
          ]
          \begin{scope}
            \node at (0, -1) {$\Gamma$};
            \node[vertex, label=below:$v$] (g1) at (0,0) {};
            \path[scale=1.5, ->] (g1) edge [loop above] node {$1_{\mathbb{Z}_3}$} (g1);
          \end{scope}

          \begin{scope}[xshift=4cm]
            \node at (0, -1) {$\Gamma^\lambda$};
            \node[vertex, label=below:$v_1$] (b) at (0,0) {};
            \node[vertex, label=below:$v_0$] (a) [left=of b] {};
            \node[vertex, label=below:$v_2$] (c) [right=of b] {};
        
            \draw[->] (a) to (b);
            \draw[->] (b) to (c);
            \draw[->] (c) to [bend right=45] (a);
          \end{scope}
        \end{tikzpicture}
        \caption{The voltage graph $(\Gamma, \lambda)$ and its derived graph $\Gamma^\lambda$.}
        \label{der::fig:voltage_graph_example}
    \end{figure}
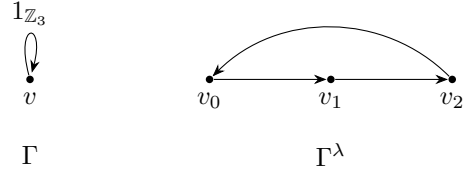

    As discussed above, a classical graph on $n$ vertices corresponds to a quantum graph on a quantum set $(\C^n, \psi_n)$.
    The voltage graph $(\Gamma, \lambda)$ is identified with a voltage quantum graph on $(\C, \psi_1)$ which is given by a family $(\tilde{A}_g)_{g \in \Z_3}$ of quantum adjacency matrices on $(\C, \psi_1)$. Since $\Gamma$ has only one edge which is labeled by $1_{\Z_3}$, it is not hard to check that $(\Gamma, \lambda)$ corresponds to the family $\tilde{A} = (\tilde{A}_g)_{g \in \Z_3}$ given by
    \begin{align*}
        \tilde{A}_{0_{\Z_3}} = 0, \quad \tilde{A}_{1_{\Z_3}} = \mathrm{id}_{\C}, \quad \tilde{A}_{2_{\Z_3}} = 0.
    \end{align*}
    Generally, we would have to check that $(\tilde{A}_g)_{g \in \Z_3}$ satisfies the conditions from Definition \ref{der::def:derived_quantum_graph} to be a voltage quantum graph. However, in this case we consider $(\tilde{A}_g)_{g \in \Z_3}$ as a voltage quantum graph with respect to the trivial action $\mathrm{triv}$ of $\hat{\Z}_3$ on $\C$; thus, there is nothing to check (see Example \ref{der::ex:classical_voltage_graphs_as_quantum_voltage_graphs}).

    What is the derived quantum graph $\tilde{A} \rtimes_{\mathrm{triv}} \hat{\Z}_3$ of this voltage quantum graph? First, recall that the derived quantum graph is defined on the crossed product quantum set
    \begin{align*}
        (\C, \psi_1) \rtimes_{\mathrm{triv}} \hat{\Z}_3.
    \end{align*}
    By Lemma \ref{der::lemma:identification_of_derived_graph_components_classical_case} (a) this quantum set is isomorphic to $(\C^3, \psi_3)$, and quantum graphs on $(\C^3, \psi_3)$ correspond to classical graph on three vertices.

    Next, by Definition \ref{der::def:derived_quantum_graph} the derived quantum graph $A := \tilde{A} \rtimes_{\mathrm{triv}} \hat{\Z}_3: \C^3 \to \C^3$ on $(\C^3, \psi_3)$ is given by the formula
    \begin{align*}
        A = \sum_{g \in \Z_3} m(X_g \otimes E_\rtimes^\ast \tilde{A}_g E_\rtimes) m^\ast.
    \end{align*}
    According to Lemma \ref{der::lemma:identification_of_derived_graph_components_classical_case} (b) and (c) the quantum graphs $X_g$ and $E_\rtimes^\ast \tilde{A}_g E_\rtimes$ correspond to the classical graphs on the set $\{v_0, v_1, v_2\}$ in Figure \ref{der::fig:Xg_graphs_example} and Figure \ref{der::fig:EAE_graph_example}, respectively.
    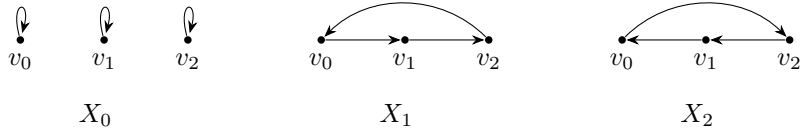
\begin{figure}[bt]
        \begin{tikzpicture}[
            node distance=1cm,
            vertex/.style={circle, fill, inner sep=1pt},
            >=Stealth
          ]
          \begin{scope}
            \node at (1, -1) {$X_0$};
            \node[vertex, label=below:$v_0$] (a0) at (0,0) {};
            \node[vertex, label=below:$v_1$] (b0) [right=of a0] {};
            \node[vertex, label=below:$v_2$] (c0) [right=of b0] {};
        
            \path[->] (a0) edge [loop above] (a0);
            \path[->] (b0) edge [loop above] (b0);
            \path[->] (c0) edge [loop above] (c0);
          \end{scope}

          \begin{scope}[xshift=4cm]
            \node at (1, -1) {$X_1$};
            \node[vertex, label=below:$v_0$] (a1) at (0,0) {};
            \node[vertex, label=below:$v_1$] (b1) [right=of a1] {};
            \node[vertex, label=below:$v_2$] (c1) [right=of b1] {};
        
            \draw[->] (a1) to (b1);
            \draw[->] (b1) to (c1);
            \draw[->] (c1) to [bend right=45] (a1);
          \end{scope}

          \begin{scope}[xshift=8cm]
            \node at (1, -1) {$X_2$};
            \node[vertex, label=below:$v_0$] (a2) at (0,0) {};
            \node[vertex, label=below:$v_1$] (b2) [right=of a2] {};
            \node[vertex, label=below:$v_2$] (c2) [right=of b2] {};
        
            \draw[->] (a2) to [bend left=45] (c2);
            \draw[->] (b2) to (a2);
            \draw[->] (c2) to (b2);
          \end{scope}
        \end{tikzpicture}
        \caption{The graphs corresponding to the adjacency matrices $X_g$.}
        \label{der::fig:Xg_graphs_example}
    \end{figure}
    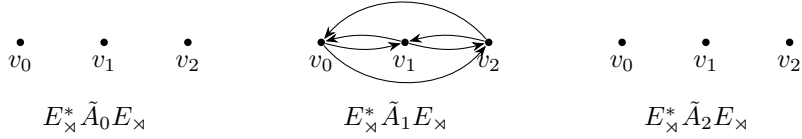
\begin{figure}[bt]
        \begin{tikzpicture}[
            node distance=1cm,
            vertex/.style={circle, fill, inner sep=1pt},
            >=Stealth
          ]
          \begin{scope}
            \node at (1, -1) {$E_\rtimes^\ast \tilde{A}_0 E_\rtimes$};
            \node[vertex, label=below:$v_0$] (a0) at (0,0) {};
            \node[vertex, label=below:$v_1$] (b0) [right=of a0] {};
            \node[vertex, label=below:$v_2$] (c0) [right=of b0] {};
          \end{scope}

          \begin{scope}[xshift=4cm]
            \node at (1, -1) {$E_\rtimes^\ast \tilde{A}_1 E_\rtimes$};
            \node[vertex, label=below:$v_0$] (a1) at (0,0) {};
            \node[vertex, label=below:$v_1$] (b1) [right=of a1] {};
            \node[vertex, label=below:$v_2$] (c1) [right=of b1] {};
        
            \draw[->] (a1) to [bend right=15] (b1);
            \draw[->] (b1) to [bend right=15] (a1);
            \draw[->] (a1) to [bend right=50] (c1);
            \draw[->] (c1) to [bend right=50] (a1);
            \draw[->] (b1) to [bend right=15] (c1);
            \draw[->] (c1) to [bend right=15] (b1);
          \end{scope}

          \begin{scope}[xshift=8cm]
            \node at (1, -1) {$E_\rtimes^\ast \tilde{A}_2 E_\rtimes$};
            \node[vertex, label=below:$v_0$] (a2) at (0,0) {};
            \node[vertex, label=below:$v_1$] (b2) [right=of a2] {};
            \node[vertex, label=below:$v_2$] (c2) [right=of b2] {};
          \end{scope}
        \end{tikzpicture}
        \caption{The graphs corresponding to the adjacency matrices $E_\rtimes^\ast \tilde{A}_g E_\rtimes$.}
        \label{der::fig:EAE_graph_example}
    \end{figure}
    By Lemma \ref{der::lemma:identification_of_derived_graph_components_classical_case} (d) the term
    \begin{align*}
        m (X_g \otimes E_\rtimes^\ast \tilde{A}_g E_\rtimes) m^\ast
    \end{align*}
    returns the adjacency matrix of the classical graph that is obtained by taking the intersection of the graphs from Figure \ref{der::fig:Xg_graphs_example} and Figure \ref{der::fig:EAE_graph_example}. Thus, only the term for $g = 1_{\Z_3}$ contributes to the derived quantum graph $A$, and we have
    \begin{align*}
        A = m (X_1 \otimes E_\rtimes^\ast \tilde{A}_1 E_\rtimes) m^\ast.
    \end{align*}
    It is not hard to see that this graph is exactly the derived graph $\Gamma^\lambda$ from Figure \ref{der::fig:voltage_graph_example}.

\end{example}

\section{Quantum isomorphisms}
\label{sec::quantum_isomorphisms}

In Example \ref{crossed::example:tensor_product_quantum_set} we discussed the special case of a crossed product quantum set with respect to the trivial action. In this case, one has an isomorphism of quantum sets
$$
    (\tilde{B}, \tilde{\psi}) \rtimes_{\mathrm{triv}} \hat{G} \cong (\tilde{B}, \tilde{\psi}) \otimes (C(G), \psi_G)
$$
between the crossed product quantum set and the tensor product quantum set.

Now we will show that \emph{any} crossed product quantum set is quantum isomorphic to a tensor product quantum set, i.e.
\begin{align*}
    (\tilde{B}, \tilde{\psi}) \rtimes_{\hat{\alpha}} \hat{G} \cong_q (\tilde{B}, \tilde{\psi}) \otimes (C(G), \psi_G).
\end{align*}
Further, we will show that the derived quantum graph $\tilde{A} \rtimes_{\hat{\alpha}} \hat{G}$ of a voltage quantum graph $\tilde{A} = (\tilde{A}_g)_{g \in G}$ is quantum isomorphic to the derived quantum graph $\tilde{A} \rtimes_{\mathrm{triv}} \hat{G}$ with respect to the trivial action, i.e.
\begin{align*}
    \tilde{A} \rtimes_{\hat{\alpha}} \hat{G} \cong_q \tilde{A} \rtimes_{\mathrm{triv}} \hat{G}.
\end{align*}
Note that the quantum graph on the right-hand side is defined on the tensor product quantum set $(\tilde{B}, \tilde{\psi}) \otimes (C(G), \psi_G)$.

As a consequence, in Corollary \ref{qiso::cor:quantum_isomorphism_between_classical_derived_graph_and_quantum_graph} we obtain a technique to construct quantum isomorphic quantum graphs for many classical graphs .
How does that work? Assume a classical graph $A$ can be written as $\tilde{A} \rtimes_{\mathrm{triv}} \hat{G}$ for some voltage quantum graph $\tilde{A} = (\tilde{A}_g)_{g \in G}$. If $\tilde{A}$ is invariant under a non-trivial action $\hat{\alpha}$ of $\hat{G}$, then $A$ is quantum isomorphic to the derived quantum graph $\tilde{A} \rtimes_{\hat{\alpha}} \hat{G}$, which is usually a non-classical quantum graph. 

Before we state the quantum isomorphism result for quantum sets, let us recall some notation. For a (finite abelian) group $G$ the C*-algebra $C(G)$ has two canonical bases: First, the basis $(e_g)_{g \in G}$ given by the indicator functions, and second the basis $(\tilde{u}_\chi)_{\chi \in \hat{G}}$ given by the functions with $\tilde{u}_\chi(g) = \chi(g)$ for all $g \in G$. The algebra $C(G)$ becomes a quantum set when it is equipped with the functional $\psi_G$ given by, equivalently,
\begin{align*}
    \psi_G(e_g) = 1 \text{ for all } g \in G,
    \quad \text{ or } \quad
    \psi_G(\tilde{u}_\chi) = n \delta_{\chi = 1} \text{ for all } \chi \in \hat{G},
\end{align*}
where $n = |G| = |\hat{G}|$ and $1 \in \hat{G}$ is the trivial character, see also Example \ref{pre::ex:examples_of_quantum_sets}. Further, the functional $\psi_G$ induces an inner product on $C(G)$ which makes the latter into a Hilbert space denoted by $L^2(G)$. The elements of $B(L^2(G))$ can be identified with $\hat{G}$-index matrices, and a basis for these is given by the standard matrix units $E_{\xi, \zeta}$ for $\xi, \zeta \in \hat{G}$.
To make the notation better readable we write $e_\chi$ for $\tilde{u}_\chi \in C(G)$ when we consider it as an element of the Hilbert space $L^2(G)$. Yet we remind the reader that $\langle e_\chi, e_\xi \rangle = \psi_G(\tilde{u}_\chi^\ast \tilde{u}_\xi) = n \delta_{\chi = \xi}$ for all $\chi, \xi \in \hat{G}$.

\begin{proposition}
    \label{land::prop:quantum_isomorphism_between_crossed_product_and_tensor_product}
    Let $\hat{\alpha}: \hat{G} \to \mathrm{Aut}(\tilde{B}, \tilde{\psi})$ be an action of the dual of a finite abelian group $\hat{G}$ on a quantum set $(\tilde{B}, \tilde{\psi})$, and let $\mathcal{H} := L^2(G)$.
    Then, the map
    \begin{align*}
        \rho: \left\{ \;
        \begin{aligned}   
            \tilde{B} \otimes C(G) &\to \left(\tilde{B} \rtimes_{\hat{\alpha}} \hat{G}\right) \otimes B(\mathcal{H}), \\
            b \otimes \tilde{u}_\chi &\mapsto \sum_{\zeta \in \hat{G}} u_\zeta b u_\zeta^\ast u_\chi \otimes E_{\zeta, \chi^{-1} \zeta}
        \end{aligned}\right.
    \end{align*}
    is a quantum isomorphism
    \begin{align*}
        (\tilde{B}, \tilde{\psi}) \rtimes_{\hat{\alpha}} \hat{G} \cong_q (\tilde{B}, \tilde{\psi}) \otimes (C(G), \psi_G).
    \end{align*}
\end{proposition}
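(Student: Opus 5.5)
The plan is to verify the two requirements of Definition \ref{pre::def:qisomoprhisms_of_qsets_and_qgraphs}(c) directly: that $\rho$ is a unital $\ast$-homomorphism, and that the induced map $p$ is unitary. For the first, I would factor $\rho$ through the two tensor legs of $\tilde{B} \otimes C(G)$. Define
\begin{align*}
    \rho_1(b) := \sum_{\zeta \in \hat{G}} \hat{\alpha}_\zeta(b) \otimes E_{\zeta,\zeta},
    \qquad
    \rho_2(\tilde{u}_\chi) := \sum_{\zeta \in \hat{G}} u_\chi \otimes E_{\zeta, \chi^{-1}\zeta}.
\end{align*}
Since $\hat{\alpha}_\zeta(b) = u_\zeta b u_\zeta^\ast$, we get $\rho_1(b)\rho_2(\tilde{u}_\chi) = \rho(b \otimes \tilde{u}_\chi)$.

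Next I would check each factor separately. The map $\rho_1$ is block-diagonal with $\ast$-automorphisms on the diagonal, so it is a unital $\ast$-homomorphism. For $\rho_2$, the product $E_{\zeta,\chi^{-1}\zeta}E_{\xi,\eta^{-1}\xi}$ is nonzero only if $\xi = \chi^{-1}\zeta$, and then it equals $E_{\zeta,(\chi\eta)^{-1}\zeta}$. Together with $u_\chi u_\eta = u_{\chi\eta}$ and $u_\chi^\ast = u_{\chi^{-1}}$, this shows that $\chi \mapsto \rho_2(\tilde{u}_\chi)$ is a unitary representation of $\hat{G}$. Since $C(G)$ is the group C*-algebra of $\hat{G}$, with $\tilde{u}_\chi\tilde{u}_\eta = \tilde{u}_{\chi\eta}$ and $\tilde{u}_\chi^\ast = \tilde{u}_{\chi^{-1}}$, the map $\rho_2$ extends to a unital $\ast$-homomorphism.

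It then suffices to show that the ranges of $\rho_1$ and $\rho_2$ commute, because then $\rho = \rho_1 \cdot \rho_2$ is a $\ast$-homomorphism on the tensor product. Computing,
\begin{align*}
    \rho_2(\tilde{u}_\chi)\rho_1(b)
    = \sum_\zeta u_\chi \hat{\alpha}_{\chi^{-1}\zeta}(b) \otimes E_{\zeta,\chi^{-1}\zeta}
    = \sum_\zeta \hat{\alpha}_\zeta(b) u_\chi \otimes E_{\zeta,\chi^{-1}\zeta}
    = \rho_1(b)\rho_2(\tilde{u}_\chi),
\end{align*}
using $u_\chi \hat{\alpha}_{\chi^{-1}\zeta}(b) = \hat{\alpha}_\zeta(b) u_\chi$.

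For unitarity of $p$, I would evaluate $p$ on the spanning set $b \otimes \tilde{u}_\chi \otimes e_\xi$. I take $E_{\zeta,\eta}$ to act by $E_{\zeta,\eta} e_\xi = \delta_{\eta=\xi} e_\zeta$; fixing this normalization consistently, given that $\|e_\xi\|^2 = n$, is the step where I most expect a slip. With this convention,
\begin{align*}
    p(b \otimes \tilde{u}_\chi \otimes e_\xi) = \hat{\alpha}_{\chi\xi}(b) u_\chi \otimes e_{\chi\xi}.
\end{align*}
By \eqref{land::eq:formula_for_scalar_product_on_crossed_product_algebra} and the $\hat{\alpha}$-invariance of $\tilde{\psi}$, the inner product of two such images is
\begin{align*}
    n\,\delta_{\chi=\eta}\langle b, c\rangle_{\tilde{\psi}} \cdot n\,\delta_{\xi=\theta},
\end{align*}
where $c \otimes \tilde{u}_\eta \otimes e_\theta$ is the second vector. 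This equals
\begin{align*}
    \langle b, c\rangle_{\tilde{\psi}}\,\langle \tilde{u}_\chi, \tilde{u}_\eta\rangle_{\psi_G}\,\langle e_\xi, e_\theta\rangle,
\end{align*}
which is the inner product of the preimages in $L^2(\tilde{B} \otimes C(G)) \otimes \mathcal{H}$. Hence $p$ is an isometry. Both spaces have dimension $n^2 \dim \tilde{B}$, so $p$ is unitary, which completes the argument.
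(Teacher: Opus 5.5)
Your proof is correct and is essentially the paper's argument: the paper uses the same factorization $\rho=\rho_1\cdot\rho_2$, the same commuting-ranges check and the universal property of the tensor product, and your $\rho_2(\tilde{u}_\chi)=\sum_{\zeta} u_\chi\otimes E_{\zeta,\chi^{-1}\zeta}$ is exactly its $u_\chi\otimes\hat{u}_\chi$ with $\hat{u}_\chi e_\xi=e_{\chi\xi}$. Your matrix-unit convention matches the paper's, and it is safe because all $e_\xi$ have the same norm, so $E_{\zeta,\eta}^\ast=E_{\eta,\zeta}$. The only difference is the unitarity step: the paper derives an explicit formula for $p^\ast$ and checks $pp^\ast=\mathrm{Id}$, which tacitly uses the same dimension equality, whereas you show directly that $p$ preserves inner products on the spanning set $b\otimes\tilde{u}_\chi\otimes e_\xi$ and then count dimensions ($n^2\dim\tilde{B}$ on both sides). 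Your route is a bit more economical, while the paper's also gives the inverse of $p$ explicitly.
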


\begin{proof}
    To prove that $\rho$ is a $\ast$-homomorphism, we construct it via the universal property of the tensor product.
    Let $(\hat{u}_\chi)_{\chi \in \hat{G}}$ be the natural representation of $\hat{G}$ on $\mathcal{H}$ given by 
    \begin{align*}
        \hat{u}_\chi e_\xi = e_{\chi \xi} \quad \text{ for all } \chi, \xi \in \hat{G},
    \end{align*}
    and consider the maps
    \begin{align*}
        \rho_1: \tilde{B} &\to \left(\tilde{B} \rtimes_{\hat{\alpha}} \hat{G}\right) \otimes B(\mathcal{H}), & b &\mapsto \sum_{\chi \in \hat{G}} u_\chi b u_\chi^\ast \otimes E_{\chi, \chi}, \\
        \rho_2: C(G) &\to \left(\tilde{B} \rtimes_{\hat{\alpha}} \hat{G}\right) \otimes B(\mathcal{H}), & \tilde{u}_\chi &\mapsto u_\chi \otimes \hat{u}_\chi.
    \end{align*}
    It is not hard to see that $\rho_1$ and $\rho_2$ are unital $\ast$-homomorphisms. Furthermore, they have commuting ranges since for all $b \in \tilde{B}$ and $\chi \in \hat{G}$,
    \begin{align*}
        \rho_2(\tilde{u}_\chi) \rho_1(b) \rho_2(\tilde{u}_\chi)^\ast 
            &= \sum_{\xi \in \hat{G}} u_\chi u_\xi b u_\xi^\ast u_\chi^\ast \otimes \hat{u}_\chi E_{\xi, \xi} \hat{u}_\chi^\ast \\
            &= \sum_{\zeta \in \hat{G}} u_\zeta b u_\zeta^\ast \otimes E_{\zeta, \zeta}
            = \rho_1(b).
    \end{align*}
    Therefore, the universal property of the tensor product of C*-algebras yields a unique unital $\ast$-homomorphism
    \begin{align*}
        \rho: \tilde{B} \otimes C(G) \to \left(\tilde{B} \rtimes_{\hat{\alpha}} \hat{G}\right) \otimes B(\mathcal{H}),
    \end{align*}
    such that $\rho|_{\tilde{B}} = \rho_1$ and $\rho|_{C(G)} = \rho_2$. A short calculation confirms that this $\ast$-homomorphism is the map $\rho$ from the statement.

    It remains to show that the associated map 
    \begin{align*}
        p: L^2(\tilde{B} \otimes C(G)) \otimes \mathcal{H} \to L^2(\tilde{B} \rtimes_{\hat{\alpha}} \hat{G}) \otimes \mathcal{H}
    \end{align*}
    from Definition \ref{pre::def:qisomoprhisms_of_qsets_and_qgraphs} is unitary. First, let us show that the adjoint of $p$ is given by
    \begin{align}
        \label{qiso::eq:formula_for_p_adjoint}
        p^\ast: \left\{
        \begin{aligned} 
            L^2(\tilde{B} \rtimes_{\hat{\alpha}} \hat{G}) \otimes \mathcal{H} &\to L^2(\tilde{B} \otimes C(G)) \otimes \mathcal{H}, \\
            b_\chi u_\chi \otimes e_\xi &\mapsto u_\xi^\ast b_\chi u_\xi \otimes \tilde{u}_\chi \otimes e_{\chi^{-1}\xi}.
        \end{aligned} \right.
    \end{align}
    Indeed, for all $c \in \tilde{B}$ and $\alpha, \beta \in \hat{G}$ we have
    \begin{align*}
        p(c \otimes \tilde{u}_\alpha \otimes e_\beta) 
        &= \sum_{\gamma \in \hat{G}} u_\gamma c u_\gamma^\ast u_\alpha \otimes E_{\gamma, \alpha^{-1} \gamma} e_\beta
        = u_{\alpha\beta} c u_{\beta}^\ast \otimes e_{\alpha\beta}.
    \end{align*}
    It follows for all $b_\chi \in \tilde{B}$ and $\chi, \xi \in \hat{G}$,
    \begin{align*}
        \begin{aligned}
        &\hspace{-.5cm} \langle b_\chi u_\chi \otimes e_\xi, p(c \otimes \tilde{u}_\alpha \otimes e_\beta) \rangle_{\psi_{\rtimes} \otimes \psi_G} \\
            &= \langle b_\chi u_\chi \otimes e_\xi,  u_{\alpha\beta } c u_{\beta}^\ast \otimes e_{\alpha\beta} \rangle_{\psi_{\rtimes} \otimes \psi_G} \\
            &= n \delta_{\xi=\alpha\beta} \langle b_\chi u_\chi, u_{\alpha\beta } c u_{\beta}^\ast \rangle_{\psi_\rtimes}.
    \end{aligned}
    \end{align*}
    Using formula \eqref{land::eq:formula_for_scalar_product_on_crossed_product_algebra} for $\langle \cdot, \cdot \rangle_{\psi_\rtimes}$, the previous expression becomes
    \begin{align}
        \label{land::eq:computation_of_p_to_compare_with_p_adjoint}
        \begin{aligned}
            \langle b_\chi u_\chi \otimes e_\xi, p(c \otimes \tilde{u}_\alpha \otimes e_\beta) \rangle_{\psi_{\rtimes} \otimes \psi_G}
            &= n^2 \delta_{\xi=\alpha\beta} \delta_{\chi=\alpha} \langle b_\chi, \hat{\alpha}_{\alpha\beta}(c) \rangle_{\tilde{\psi}}.
        \end{aligned}
    \end{align}
    On the other hand, for the map $p^\ast$ from \eqref{qiso::eq:formula_for_p_adjoint} we compute
    \begin{align*}
        \begin{aligned}
        \langle p^\ast(b_\chi u_\chi \otimes e_\xi), &c \otimes \tilde{u}_\alpha \otimes e_\beta \rangle_{\tilde{\psi} \otimes \psi_G \otimes \psi_G} \\
            &= \langle u_\xi^\ast b_\chi u_\xi \otimes \tilde{u}_\chi \otimes e_{\chi^{-1}\xi}, c \otimes \tilde{u}_\alpha \otimes e_\beta \rangle_{\tilde{\psi} \otimes \psi_G \otimes \psi_G} \\
            &= \langle u_\xi^\ast b_\chi u_\xi, c \rangle_{\tilde{\psi}} \langle \tilde{u}_\chi, \tilde{u}_\alpha \rangle_{\psi_G} \langle e_{\chi^{-1}\xi}, e_\beta \rangle_{\psi_G} \\
            &= n^2 \delta_{\chi=\alpha} \delta_{\chi^{-1} \xi = \beta} \langle u_\xi^\ast b_\chi u_\xi, c \rangle_{\tilde{\psi}}.
        \end{aligned}
    \end{align*}
    Using $\tilde{\psi} \circ \hat{\alpha}_\xi = \tilde{\psi}$ it is not hard to see
    \begin{align*}
        \langle u_\xi^\ast b_\chi u_\xi, c \rangle_{\tilde{\psi}} 
            = \tilde{\psi}(u_\xi^\ast b_\chi^\ast u_\xi c)
            = \tilde{\psi}(b_\chi^\ast u_\xi c u_\xi^\ast)
            = \langle b_\chi, \hat{\alpha}_\xi(c) \rangle_{\tilde{\psi}}.
    \end{align*}
    Since $\chi^{-1} \xi = \beta$ and $\chi=\alpha$ entail $\xi = \alpha \beta$ we arrive at
    \begin{align}
        \langle p^\ast(b_\chi u_\chi \otimes e_\xi), c \otimes \tilde{u}_\alpha \otimes e_\beta \rangle_{\tilde{\psi} \otimes \psi_G \otimes \psi_G}
            = n^2 \delta_{\chi=\alpha} \delta_{\xi = \alpha \beta} \langle b_\chi, \hat{\alpha}_{\alpha \beta}(c) \rangle_{\tilde{\psi}}.
    \end{align}
    This is the same expression as in \eqref{land::eq:computation_of_p_to_compare_with_p_adjoint}; consequently $p^\ast$ is indeed the adjoint of the map $p$.

    Finally, we are ready to show that $p$ is a unitary map. It suffices to prove $p p^\ast = \mathrm{Id}$. For this, let $b_\chi u_\chi \in \tilde{B} \rtimes_{\hat{\alpha}} \hat{G}$ and $\xi \in \hat{G}$ and observe
    \begin{align*}
        pp^\ast(b_\chi u_\chi \otimes e_\xi) 
            &= p\left( u_\xi^\ast b_\chi u_\xi \otimes \tilde{u}_\chi \otimes e_{\chi^{-1}\xi} \right) \\
            &=  \sum_{\gamma \in \hat{G}} u_\gamma u_\xi^\ast b_\chi u_\xi u_\gamma^\ast u_\chi \otimes E_{\gamma, \chi^{-1} \gamma} e_{\chi^{-1}\xi} \\
            &= b_\chi u_\chi \otimes e_\xi.
    \end{align*}
    This finishes the proof.
\end{proof}

\begin{thm}
    \label{qiso::thm:quantum_isomorphism_between_derived_graphs}
    Let $(\tilde{B}, \tilde{\psi})$, $\hat{\alpha}: \hat{G} \to \mathrm{Aut}(\tilde{B}, \tilde{\psi})$ and $(\tilde{B} \rtimes_{\hat{\alpha}} \hat{G}, \psi_\rtimes)$ be as above. Further, let $\tilde{A} = (\tilde{A}_g)_{g \in G}$ be a voltage quantum graph on $(\tilde{B}, \tilde{\psi})$ with respect to the action $\hat{\alpha}$ of $\hat{G}$.
    Then we have a quantum isomorphism
    \begin{align*}
        \tilde{A} \rtimes_{\hat{\alpha}} \hat{G} \cong_q \tilde{A} \rtimes_{\mathrm{triv}} G
    \end{align*}
    which is given by the map $\rho: \tilde{B} \otimes C(G) \to (\tilde{B} \rtimes_{\hat{\alpha}} \hat{G}) \otimes B(\mathcal{H})$ from Proposition \ref{land::prop:quantum_isomorphism_between_crossed_product_and_tensor_product}.
\end{thm}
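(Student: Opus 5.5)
The plan is to verify directly the single intertwining relation required by Definition~\ref{pre::def:qisomoprhisms_of_qsets_and_qgraphs}(d). Proposition~\ref{land::prop:quantum_isomorphism_between_crossed_product_and_tensor_product} already shows that $\rho$ is a quantum isomorphism of the underlying quantum sets. Here $\rho$ goes from $(\tilde{B}, \tilde{\psi}) \otimes (C(G), \psi_G) \cong (\tilde{B}, \tilde{\psi}) \rtimes_{\mathrm{triv}} \hat{G}$ to $(\tilde{B} \rtimes_{\hat{\alpha}} \hat{G}) \otimes B(\mathcal{H})$. So, writing $A_{\hat{\alpha}} := \tilde{A} \rtimes_{\hat{\alpha}} \hat{G}$ and $A_{\mathrm{triv}} := \tilde{A} \rtimes_{\mathrm{triv}} \hat{G}$, it only remains to show
\begin{align*}
    \rho A_{\mathrm{triv}} = (A_{\hat{\alpha}} \otimes \mathrm{Id}_{B(\mathcal{H})}) \rho .
\end{align*}
Both sides are linear, so it suffices to check this on the spanning elements $b \otimes \tilde{u}_\chi$ with $b \in \tilde{B}$ and $\chi \in \hat{G}$.

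The key tool is formula \eqref{der::eq:formula_for_m(X_g_otimes_EstarAE)mstar} of Lemma~\ref{der::lemma:commutation_in_derived_graph}. Summing it over $g$ gives the closed form
\begin{align*}
    A_{\hat{\alpha}}(b u_\chi) = \sum_{g \in G} \overline{\chi(g)}\, \tilde{A}_g(b)\, u_\chi .
\end{align*}
The same lemma applies to the trivial action, since $\mathrm{triv}$ is also an action on $(\tilde{B}, \tilde{\psi})$ and every family is invariant under it. Transporting along the isomorphism $b u_\chi \mapsto b \otimes \tilde{u}_\chi$ of Example~\ref{crossed::example:tensor_product_quantum_set} then yields
\begin{align*}
    A_{\mathrm{triv}}(b \otimes \tilde{u}_\chi) = \sum_{g \in G} \overline{\chi(g)}\, \tilde{A}_g(b) \otimes \tilde{u}_\chi .
\end{align*}

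Applying $\rho$ to the second formula gives the left-hand side
\begin{align*}
    \sum_{\zeta \in \hat{G}} \sum_{g \in G} \overline{\chi(g)}\, u_\zeta \tilde{A}_g(b) u_\zeta^\ast u_\chi \otimes E_{\zeta, \chi^{-1}\zeta}.
\end{align*}
For the right-hand side, we first write $\rho(b \otimes \tilde{u}_\chi) = \sum_\zeta \hat{\alpha}_\zeta(b) u_\chi \otimes E_{\zeta, \chi^{-1}\zeta}$. Applying $A_{\hat{\alpha}}$ in the first leg and using the closed form, with $\hat{\alpha}_\zeta(b) \in \tilde{B}$ in place of $b$, gives
\begin{align*}
    \sum_{\zeta} \sum_g \overline{\chi(g)}\, \tilde{A}_g(\hat{\alpha}_\zeta(b))\, u_\chi \otimes E_{\zeta, \chi^{-1}\zeta}.
\end{align*}
The two sides agree term by term by the voltage condition \eqref{der::eq:qvoltage_graph_invariance_under_group_action}, which gives $\tilde{A}_g \hat{\alpha}_\zeta = \hat{\alpha}_\zeta \tilde{A}_g$ and hence $\tilde{A}_g(\hat{\alpha}_\zeta(b)) = u_\zeta \tilde{A}_g(b) u_\zeta^\ast$.

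The computation itself is routine. The one point requiring care is the bookkeeping of the identification in the trivial case. I must check that formula \eqref{der::eq:formula_for_m(X_g_otimes_EstarAE)mstar}, which was derived inside $\tilde{B} \rtimes_{\mathrm{triv}} \hat{G}$, is correctly transported to $\tilde{B} \otimes C(G)$. This holds because the isomorphism of Example~\ref{crossed::example:tensor_product_quantum_set} is an isomorphism of quantum sets: it preserves $m$, $m^\ast$, $E_\rtimes$ and the $X_g$, and therefore the whole derived graph. I must also confirm that the convention of Notation~\ref{pre::notation:quantum_functions} means $A_{\hat{\alpha}}$ acts only on the first tensor leg, which is exactly how it was applied above.
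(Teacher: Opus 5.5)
Your proof is correct, but it takes a different route from the paper's.

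\textbf{The paper's route.} The paper does not use the closed form of the derived graph at this point. It argues block by block. For each fixed $g$ it invokes Proposition \ref{pre::prop:properties_of_qisomorphisms} to get $\rho m_\otimes = m_\rtimes(\rho\otimes\rho)$ and $m_\rtimes^\ast\rho = (\rho\otimes\rho)m_\otimes^\ast$. This reduces the claim to two separate intertwining relations, $X_g^\rtimes\rho = \rho X_g^\otimes$ and $E_\rtimes^\ast\tilde{A}_g E_\rtimes\rho = \rho\, E_\otimes^\ast\tilde{A}_g E_\otimes$. Each is then checked on $b\otimes\tilde{u}_\xi$, and the voltage condition is used only in the second.

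\textbf{Your route.} You first collapse the whole derived graph, via formula \eqref{der::eq:formula_for_m(X_g_otimes_EstarAE)mstar} of Lemma \ref{der::lemma:commutation_in_derived_graph}, to the multiplier form $b u_\chi \mapsto \sum_g \overline{\chi(g)}\,\tilde{A}_g(b)\,u_\chi$. You then verify the intertwining in a single computation, where $\tilde{A}_g\hat{\alpha}_\zeta = \hat{\alpha}_\zeta\tilde{A}_g$ matches the two sides term by term. Your transport of the trivial-action formula to $\tilde{B}\otimes C(G)$ is legitimate, since the paper's $X_g^\otimes$ and $E_\otimes$ are exactly the transported maps.

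\textbf{What each buys.} Your argument is more elementary and self-contained. It needs neither the comultiplication compatibility $m^\ast\rho = (\rho\otimes\rho)m^\ast$, which the paper imports from Musto--Reutter--Verdon's characterization of bijective quantum functions, nor the leg-ordering conventions for $\rho\otimes\rho$ in Notation \ref{pre::notation:quantum_functions}. It rests only on an already-proved lemma and the voltage condition.

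The paper's argument gives a stronger structural statement: $\rho$ intertwines each building block, $X_g$ and $E^\ast\tilde{A}_gE$, separately. In other words, the quantum isomorphism respects the ``intersection'' decomposition of the derived graph. For that reason the same argument applies verbatim to any quantum graph assembled from these blocks using $m$, $m^\ast$, sums and compositions, not just to the specific combination $\tilde{A}\rtimes\hat{G}$.
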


\begin{proof}
    We need to show that
    \begin{align*}
        (\tilde{A} \rtimes_{\hat{\alpha}} \hat{G}) \rho
            = \rho (\tilde{A} \rtimes_{\mathrm{triv}} \hat{G})
    \end{align*}
    holds where the left-hand side is to be interpreted as in Notation \ref{pre::notation:quantum_functions}. First, recall
    \begin{align*}
        \tilde{A} \rtimes_{\hat{\alpha}} \hat{G}
            &= \sum_{g \in G} m_\rtimes(X_g^\rtimes \otimes E_\rtimes^\ast \tilde{A}_g E_\rtimes) m_\rtimes^\ast \\
        \tilde{A} \rtimes_{\mathrm{triv}} \hat{G}
            &= \sum_{g \in G} m_\otimes(X_g^\otimes \otimes E_\otimes^\ast \tilde{A}_g E_\otimes) m_\otimes^\ast,
    \end{align*}
    where we distinguish the structure maps $m, E, X_g$ on the crossed product quantum sets $\tilde{B} \rtimes_{\hat{\alpha}} \hat{G}$ and $\tilde{B} \otimes C(G) \cong \tilde{B} \rtimes_{\mathrm{triv}} \hat{G}$ by super-/subscripts $\rtimes$ and $\otimes$, respectively. 
    We will show for any fixed $g \in G$ that
    \begin{align*}
        (m_\rtimes(X_g^\rtimes \otimes E_\rtimes^\ast \tilde{A}_g E_\rtimes) m_\rtimes^\ast) \rho
        = \rho (m_\otimes(X_g^\otimes \otimes E_\otimes^\ast \tilde{A}_g E_\otimes) m_\otimes^\ast)
    \end{align*}
    holds. From Proposition \ref{pre::prop:properties_of_qisomorphisms} we know the equalities
    \begin{align*}
        \rho m_\otimes = m_\rtimes (\rho \otimes \rho)
        \quad \text{ and } \quad
        m_\rtimes^\ast \rho = (\rho \otimes \rho) m_\otimes^\ast.
    \end{align*}
    Thus, all we need to do is to prove
    \begin{align*}
        X_g^\rtimes \rho = \rho X_g^\otimes
        \quad \text{ and } \quad
        E_\rtimes^\ast \tilde{A}_g E_\rtimes \rho = \rho (E_\otimes^\ast \tilde{A}_g E_\otimes).
    \end{align*}
    To show the first equality we need to consider for $b \in \tilde{B}$ and $\xi \in \hat{G}$,
    \begin{align*}
        X_g^\rtimes \rho(b \otimes \tilde{u}_\xi)
            = \frac{1}{n} \sum_{\chi \in \hat{G}} \overline{\chi(g)} u_\chi \psi_\rtimes(u_\chi^\ast \rho(b \otimes \tilde{u}_\xi)).
    \end{align*}
    For fixed $\chi \in \hat{G}$ the last factor can be computed as
    \begin{align*}
        \psi_\rtimes(u_\chi^\ast \rho(b \otimes \tilde{u}_\xi))
            &= \sum_{\zeta \in \hat{G}} \psi_\rtimes(u_\chi^\ast u_\zeta b u_\zeta^\ast u_\xi) E_{\zeta, \chi^{-1} \zeta} \\
            &= \sum_{\zeta \in \hat{G}} n \delta_{\chi = \xi} \tilde{\psi}(b) E_{\zeta, \chi^{-1} \zeta} \\
            &= n \delta_{\chi = \xi} \tilde{\psi}(b) \hat{u}_\chi,
    \end{align*}
    where $\hat{u}_\chi \in B(L^2(G))$ is the operator from the proof of Proposition \ref{land::prop:quantum_isomorphism_between_crossed_product_and_tensor_product} given by $\hat{u}_\chi e_\zeta = e_{\chi \zeta}$ for all $\zeta \in \hat{G}$.
    It follows
    \begin{align}
        \label{qiso::eq:Xg_rho_computation}
        \begin{aligned}
        X_g^\rtimes \rho(b \otimes \tilde{u}_\xi) 
            &= \overline{\xi(g)} \tilde{\psi}(b) u_\xi \otimes \hat{u}_\xi.
        \end{aligned}
    \end{align}
    On the other hand, we compute
    \begin{align}
        \label{qiso::eq:rho_Xg_computation}
        \begin{aligned}
        \rho X_g^\otimes(b \otimes \tilde{u}_\xi) 
            &= \rho \left( \frac{1}{n} \sum_{\chi \in \hat{G}} \overline{\chi(g)} (1 \otimes \tilde{u}_\chi) (\tilde{\psi} \otimes \psi_G)(b \otimes \tilde{u}_\chi^\ast \tilde{u}_\xi)  \right) \\
            &=  \rho \left( \overline{\xi(g)} \tilde{\psi}(b) (1 \otimes \tilde{u}_\xi) \right) \\
            &= \overline{\xi(g)} \tilde{\psi}(b) u_\xi \otimes \hat{u}_\xi.
        \end{aligned}
    \end{align}
    Comparing \eqref{qiso::eq:Xg_rho_computation} and \eqref{qiso::eq:rho_Xg_computation} shows the first desired equality.

    It remains to prove the second equality, i.e. $E_\rtimes^\ast \tilde{A}_g E_\rtimes \rho = \rho (E_\otimes^\ast \tilde{A}_g E_\otimes)$. Let us consider again fixed $b \in \tilde{B}$ and $\xi \in \hat{G}$. We have on the one hand
    \begin{align}
        \label{qiso::eq:E*.tildeAg.E.rho_computation}
        \begin{aligned}
        E_\rtimes^\ast \tilde{A}_g E_\rtimes \rho(b \otimes \tilde{u}_\xi)
            &= \sum_{\zeta \in \hat{G}} E_\rtimes^\ast \tilde{A}_g E_\rtimes (u_\zeta b u_\zeta^\ast u_\xi \otimes E_{\zeta, \xi^{-1} \zeta}) \\
            &= n \delta_{\xi = 1} \sum_{\zeta \in \hat{G}} \tilde{A}_g(u_\zeta b u_\zeta^\ast) \otimes E_{\zeta, \zeta} \\
            &= n \delta_{\xi = 1} \sum_{\zeta \in \hat{G}} u_\zeta \tilde{A}_g(b) u_\zeta^\ast \otimes E_{\zeta, \zeta},
        \end{aligned}
    \end{align}
    (where we use in final step that $\tilde{A}_g$ commutes with the action $\hat{\alpha}$),
    and on the other hand
    \begin{align}
        \label{qiso::eq:rho.E*.tildeAg.E_computation}
        \begin{aligned}
        \rho (E_\otimes^\ast \tilde{A}_g E_\otimes)(b \otimes \tilde{u}_\xi)
            &= \rho \left( n \delta_{\xi = 1} \tilde{A}_g(b) \otimes \tilde{u}_1 \right) \\
            &= n \delta_{\xi = 1} \sum_{\zeta \in \hat{G}} u_\zeta \tilde{A}_g(b) u_\zeta^\ast \otimes E_{\zeta, \zeta}.
        \end{aligned}
    \end{align}
    A comparison of \eqref{qiso::eq:E*.tildeAg.E.rho_computation} and \eqref{qiso::eq:rho.E*.tildeAg.E_computation} finishes the proof.
\end{proof}

If the derived quantum graph $\tilde{A} \rtimes_{\mathrm{triv}} \hat{G}$ from Theorem \ref{qiso::thm:quantum_isomorphism_between_derived_graphs} is a classical graph, then the previous theorem yields a method to produce quantum versions of this given classical graph, i.e. quantum graphs that are quantum isomorphic to the original classical graph.

\begin{corollary}
    \label{qiso::cor:quantum_isomorphism_between_classical_derived_graph_and_quantum_graph}
    Assume that $\Gamma^\lambda$ is a classical derived graph of a voltage graph $(\Gamma, \lambda)$ on a vertex set $V$ with respect to a finite abelian group $G$. Then for any action $\hat{\alpha}: \hat{G} \to \mathrm{Aut}(\Gamma, \lambda)$ of the dual group on the voltage graph we obtain a quantum isomorphism
    \begin{align*}
        \Gamma^\lambda \cong_q \Gamma \rtimes_{\hat{\alpha}} \hat{G},
    \end{align*}
    where we interpret $\Gamma$ on the right-hand side as a voltage quantum graph on $(\C^V, \psi_V)$ as in Example \ref{der::ex:classical_voltage_graphs_as_quantum_voltage_graphs}.

    Here with an action of $\hat{G}$ on $(\Gamma, \lambda)$ we mean an action $\hat{\alpha}: \hat{G} \to \mathrm{Aut}(V)$ which leaves the set of $\chi$-colored edges in $\Gamma$ invariant for all $\chi \in \hat{G}$.
\end{corollary}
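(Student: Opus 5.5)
The corollary is obtained by translating the classical data into the quantum framework and then applying Theorem \ref{qiso::thm:quantum_isomorphism_between_derived_graphs}.

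\textbf{Step 1: the classical voltage graph as a voltage quantum graph.} Since $\Gamma^\lambda$ is regarded as a (simple) graph, Lemma \ref{der::lemma:presimple_voltage_graph} shows that $(\Gamma,\lambda)$ is pre-simple. Proposition \ref{der::prop:classical_voltage_graphs_as_quantum_voltage_graphs} then attaches to it the family $\tilde{A}=(\tilde{A}_g)_{g\in G}$ of adjacency matrices on $(\C^V,\psi_V)$, where $\tilde{A}_g$ is the adjacency matrix of the edges labeled by $g$. (The statement says ``$\chi$-colored''; I read this as $g$-labeled for every $g\in G$.)

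\textbf{Step 2: $\tilde{A}$ is a voltage quantum graph with respect to $\hat{\alpha}$.} A permutation action $\hat{\alpha}:\hat{G}\to \mathrm{Aut}(V)$ induces automorphisms of $\C^V$ by $\hat{\alpha}_\chi(e_v)=e_{\hat{\alpha}_\chi(v)}$. These preserve $\psi_V$, because $\psi_V$ assigns $1$ to every minimal projection. Hence $\hat{\alpha}$ is an action on the quantum set $(\C^V,\psi_V)$. It remains to verify condition \eqref{der::eq:qvoltage_graph_invariance_under_group_action}. For a permutation $\sigma$ of $V$, the equation $\tilde{A}_g\sigma=\sigma\tilde{A}_g$ holds exactly when $\sigma$ maps $g$-labeled edges $v\to w$ bijectively onto $g$-labeled edges $\sigma(v)\to\sigma(w)$. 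This is precisely the invariance hypothesis on the sets of $g$-labeled edges, so $\tilde{A}$ is a voltage quantum graph with respect to $\hat{\alpha}$. I expect this bookkeeping to be the only point needing care. One must fix whether $\hat{\alpha}_\chi$ acts on $\C^V$ by $f\mapsto f\circ\sigma$ or $f\mapsto f\circ\sigma^{-1}$, but either convention gives the same commutation condition.

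\textbf{Step 3: identify both sides.} By the proposition identifying $\Gamma^\lambda$ with $\tilde{A}\rtimes_{\mathrm{triv}}\hat{G}$, together with Lemma \ref{der::lemma:identification_of_derived_graph_components_classical_case}(a), the classical graph $\Gamma^\lambda$ on $V\times G$ corresponds to the quantum graph $\tilde{A}\rtimes_{\mathrm{triv}}\hat{G}$ on $(\C^V,\psi_V)\rtimes_{\mathrm{triv}}\hat{G}\cong(\C^{V\times G},\psi_{V\times G})$. Theorem \ref{qiso::thm:quantum_isomorphism_between_derived_graphs} gives $\tilde{A}\rtimes_{\hat{\alpha}}\hat{G}\cong_q\tilde{A}\rtimes_{\mathrm{triv}}\hat{G}$. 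A quantum isomorphism precomposed with an ordinary quantum-graph isomorphism is again a quantum isomorphism: the unitarity of $p$ and the intertwining relation are both preserved under composition with a $\psi$-preserving $*$-isomorphism. Also, quantum isomorphism is symmetric (by \cite{musto_compositional_2018}, or simply because the corollary is stated as a relation in either direction). Combining these facts yields $\Gamma^\lambda\cong_q\Gamma\rtimes_{\hat{\alpha}}\hat{G}$.
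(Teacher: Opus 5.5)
Your proposal is correct and matches the paper's intended argument. The paper states the corollary without a separate proof, as an immediate consequence of Theorem~\ref{qiso::thm:quantum_isomorphism_between_derived_graphs}: read $(\Gamma,\lambda)$ as a pre-simple voltage graph, encode it as $\tilde{A}$ via Proposition~\ref{der::prop:classical_voltage_graphs_as_quantum_voltage_graphs}, and identify $\Gamma^\lambda$ with $\tilde{A}\rtimes_{\mathrm{triv}}\hat{G}$. Your reading of ``$\chi$-colored'' as ``$g$-labeled'' is the right one. The appeal to symmetry is unnecessary: the map $\rho$ of Proposition~\ref{land::prop:quantum_isomorphism_between_crossed_product_and_tensor_product} already goes from $\tilde{B}\otimes C(G)$, the side carrying $\Gamma^\lambda$, into $(\tilde{B}\rtimes_{\hat{\alpha}}\hat{G})\otimes B(\mathcal{H})$, which is exactly the direction of $\Gamma^\lambda\cong_q\Gamma\rtimes_{\hat{\alpha}}\hat{G}$ in Definition~\ref{pre::def:qisomoprhisms_of_qsets_and_qgraphs}.
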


\section{Quantum Gross--Tucker Theorem}
\label{sec::qgross_tucker}

Which quantum graphs are derived quantum graphs of a voltage quantum graph as in Definition \ref{der::def:derived_quantum_graph}?
In the classical situation, Gross and Tucker proved that whenever a graph $\Gamma$ admits a free action $\alpha: G \to \mathrm{Aut}(\Gamma)$ of a finite group $G$, then there is a suitable $G$-labeling $\lambda$ of the quotient graph $\Gamma / G$ such that
\begin{align*}
    \Gamma \cong (\Gamma / G)^\lambda,
\end{align*}
i.e. $\Gamma$ is the derived graph of the voltage graph $(\Gamma / G, \lambda)$ \cite[Theorem 2.2.2]{gross_topological_1987}.
In this section, we prove a quantum version of this result.

Thus, the setting is the following: We have given a quantum graph $A: B \to B$ on a quantum set $(B, \psi)$ together with an action $\alpha: G \to \mathrm{Aut}(A)$ of a finite abelian group $G$ on the quantum graph $A$. This means that $\alpha$ is an action of $G$ on the C*-algebra $B$ which satisfies
\begin{align*}
    \psi = \psi \alpha_g 
    \quad \text{ and } \quad
    \alpha_g A = A \alpha_g
\end{align*}
for all $g \in G$.

Where Gross and Tucker asked that the action on the graph be free, we assume that the action $\alpha$ satisfies the assumptions of Landstad's theorem including the additional condition for the quantum set version from Proposition \ref{pre::prop:landstad_for_quantum_sets}. Thus, we ask that there is a unitary representation $(u_\chi)_{\chi \in \hat{G}} \subset B$ of the dual group $\hat{G}$ inside $B$ such that
\begin{align*}
    \alpha_g(u_\chi) = \chi(g) u_\chi
    \quad \text{ and } \quad
    \psi = \psi \mathrm{Ad}(u_\chi)
\end{align*}
hold for all $g \in G$ and $\chi \in \hat{G}$. Furthermore, also the quantum graph $A$ must be invariant under $\mathrm{Ad}(u_\chi)$, i.e. we will also require
\begin{align}
    \label{gtt::eq:A_is_invariant_under_Ad_u_chi}
    \mathrm{Ad}(u_\chi) A = A \mathrm{Ad}(u_\chi).
\end{align}

Then, in view of Proposition \ref{pre::prop:landstad_for_quantum_sets} the quantum set $(B, \psi)$ is a crossed product quantum set
\begin{align*}
    (B, \psi) = (B^\alpha, \tilde{\psi}) \rtimes_{\hat{\alpha}} \hat{G}.
\end{align*}
In particular, 
\begin{itemize}
    \item $B = B^\alpha \rtimes_{\hat{\alpha}} \hat{G}$,
    \item every element of $B$ can be written uniquely as $b = \sum_{\chi \in \hat{G}} b_\chi u_\chi$ for suitable $b_\chi \in B^\alpha$,
    \item $\psi(b_\chi u_\chi) = \delta_{\chi=1} \psi(b_\chi) = n \delta_{\chi=1} \tilde{\psi}(b_\chi)$ holds for all $\chi \in \hat{G}$ and $b_\chi \in B^\alpha$ with $n = |G|$.
\end{itemize}

Note that we consistently use the notation from Notation \ref{pre::notation:crossed_product_quantum_sets} for crossed product quantum sets with $B^\alpha$ in the place of $\tilde{B}$ as well as the notation from Landstad's theorem (Theorem \ref{pre::thm:landstads_thm}) and Proposition \ref{pre::prop:landstad_for_quantum_sets}.

\begin{thm}
    \label{gtt::thm:quantum_gross_tucker_theorem}
    Let $A: B \to B$ be a quantum graph on the quantum set $(B, \psi)$ and let $\alpha: G \to \mathrm{Aut}(A)$ be an action of a finite abelian group $G$ on $A$ which satisfies the assumptions of Landstad's theorem for quantum sets (Proposition \ref{pre::prop:landstad_for_quantum_sets}) as well as the additional assumption \eqref{gtt::eq:A_is_invariant_under_Ad_u_chi}. Then the following are true:
    \begin{enumerate}
        \item The family $\tilde{A} := (\tilde{A}_g)_{g \in G}$ with
        \begin{align*}
            \tilde{A}_g = m^\ast(X_g \otimes A) m|_{B^\alpha}
        \end{align*}
        is a voltage quantum graph on $(B^\alpha, \tilde{\psi})$ with respect to the group action $\hat{\alpha}$, where $X_g = \frac{1}{n} \sum_{\chi \in \hat{G}} \overline{\chi(g)} u_\chi u_\chi^\dagger$ with $n = |G|$ is the quantum adjacency matrix from Definition \ref{der::def:derived_quantum_graph} and Proposition \ref{der::prop:derived_graph_is_quantum_graph}. 
        
        \item One has
        \begin{align*}
            A \cong \tilde{A} \rtimes_{\hat{\alpha}} \hat{G},
        \end{align*}
        i.e. $A$ is recovered as the derived graph of $\tilde{A}$ as in Definition \ref{der::def:derived_quantum_graph}.
    \end{enumerate}
\end{thm}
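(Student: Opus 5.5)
The plan is to split $A$ into its spectral components with respect to $\alpha$, to identify $\tilde{A}_g$ as a Fourier transform of these components, and to read both (a) and (b) off the resulting formula. Throughout I read the defining formula as $\tilde{A}_g = m(X_g \otimes A)m^\ast|_{B^\alpha}$. This is the only composition that typechecks, and it is the ``intersection'' operation of Remark \ref{der::rem:intersection_of_qgraphs}. Part of the task is to check that it maps $B^\alpha$ into $B^\alpha$.

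By Proposition \ref{pre::prop:landstad_for_quantum_sets} we may work in $(B,\psi) = (B^\alpha,\tilde{\psi}) \rtimes_{\hat{\alpha}} \hat{G}$, so Lemma \ref{land::lemma:description_of_crossed_product} gives us the comultiplication. The spectral subspaces $B_\eta := \{ b \in B : \alpha_g(b) = \eta(g) b \text{ for all } g\}$ are exactly $B^\alpha u_\eta$. Since $A$ commutes with every $\alpha_g$, it preserves each $B_\eta$. Hence there are linear maps $A^{(\eta)} : B^\alpha \to B^\alpha$ with
\begin{align*}
    A(c\, u_\eta) = A^{(\eta)}(c)\, u_\eta \qquad (c \in B^\alpha,\ \eta \in \hat{G}).
\end{align*}
Because $\hat{G}$ is abelian, $u_\xi u_\eta u_\xi^\ast = u_\eta$, so $\mathrm{Ad}(u_\xi)(c u_\eta) = \hat{\alpha}_\xi(c) u_\eta$. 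Assumption \eqref{gtt::eq:A_is_invariant_under_Ad_u_chi} therefore becomes $\hat{\alpha}_\xi A^{(\eta)} = A^{(\eta)} \hat{\alpha}_\xi$ for all $\xi, \eta$.

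Next I would insert formula \eqref{land::eq:comultiplication_formula_on_crossed_product} and \eqref{der::eq:formula_for_X_g}, then collapse $\tilde{\psi}(b_i^{(1)}) b_i^{(2)} = b$ using Proposition \ref{pre::prop:properties_of_counit_and_comultiplication}. This gives
\begin{align*}
    m(X_g \otimes A)m^\ast(b\, u_\chi) = \frac{1}{n}\sum_{\xi \in \hat{G}} \overline{\xi(g)}\; \hat{\alpha}_\xi\bigl(A^{(\xi^{-1}\chi)}(\hat{\alpha}_{\xi^{-1}}(b))\bigr)\, u_\chi = \Bigl(\frac{1}{n}\sum_{\xi} \overline{\xi(g)}\, A^{(\xi^{-1}\chi)}(b)\Bigr) u_\chi .
\end{align*}
Setting $\chi = 1$ shows that $\tilde{A}_g = \frac{1}{n}\sum_\xi \overline{\xi(g)} A^{(\xi^{-1})}$ maps $B^\alpha$ to $B^\alpha$. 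It also commutes with $\hat{\alpha}$, which is condition \eqref{der::eq:qvoltage_graph_invariance_under_group_action}. Orthogonality of characters inverts this relation: $A^{(\eta)} = \sum_g c_{\eta}(g)\, \tilde{A}_g$, where $c_\eta(g)$ is $\eta(g)$ or $\overline{\eta(g)}$ depending on conventions.

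For (b), formula \eqref{der::eq:formula_for_m(X_g_otimes_EstarAE)mstar} of Lemma \ref{der::lemma:commutation_in_derived_graph} gives $(\tilde{A} \rtimes_{\hat{\alpha}} \hat{G})(b u_\chi) = \sum_g \overline{\chi(g)}\, \tilde{A}_g(b)\, u_\chi$. By the inversion above, this equals $A^{(\chi^{\pm 1})}(b) u_\chi$. If the sign comes out wrong, I would fix it either by composing with the quantum set automorphism induced by $g \mapsto g^{-1}$ on the labels, or by replacing $u_\chi$ with $u_{\chi^{-1}}$. Either way this gives the isomorphism $A \cong \tilde{A} \rtimes_{\hat{\alpha}} \hat{G}$ via the identity map of $B$, up to that relabeling.

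The main obstacle is (a): showing that each $\tilde{A}_g$ is a quantum adjacency matrix on $(B^\alpha,\tilde{\psi})$. We cannot simply argue that intersections of quantum graphs are quantum graphs, since Remark \ref{der::rem:intersection_of_qgraphs} warns that this fails in general. Instead I would compute $m(A \otimes A)m^\ast(b u_\chi)$ with the same comultiplication formula and the $\hat{\alpha}$-invariance of the $A^{(\eta)}$. This turns Schur idempotency of $A$ into the convolution identity
\begin{align*}
    A^{(\chi)} = \frac{1}{n}\sum_{\xi \in \hat{G}} \tilde{m}\bigl(A^{(\xi)} \otimes A^{(\xi^{-1}\chi)}\bigr)\tilde{m}^\ast .
\end{align*}
Taking the Fourier transform in $\chi$ turns this convolution into a pointwise identity, $\tilde{m}(\tilde{A}_g \otimes \tilde{A}_h)\tilde{m}^\ast = \delta_{g=h}\tilde{A}_g$, which contains Schur idempotency. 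For $\ast$-preservation I would use $(c u_\eta)^\ast = \hat{\alpha}_{\eta^{-1}}(c^\ast) u_{\eta^{-1}}$ together with $A(x^\ast) = A(x)^\ast$. This gives $A^{(\eta^{-1})}(c^\ast) = A^{(\eta)}(c)^\ast$, using $\hat{\alpha}$-invariance. The conjugation in $\tilde{A}_g = \frac{1}{n}\sum_\xi \overline{\xi(g)} A^{(\xi^{-1})}$ swaps $\xi \leftrightarrow \xi^{-1}$ and $\overline{\xi(g)} \leftrightarrow \xi(g)$ simultaneously, so $\tilde{A}_g$ is $\ast$-preserving.
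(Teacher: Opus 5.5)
Your route is essentially the paper's. Your $A^{(\eta)}$ are exactly the maps $\tilde{A}_\chi$ of Lemma~\ref{lemma:technical_properties_of_Ag_Xg}, and you derive and use the same four facts:
\begin{enumerate}
\item $\hat{\alpha}$-equivariance of the $A^{(\eta)}$, from \eqref{gtt::eq:A_is_invariant_under_Ad_u_chi};
\item the convolution identity, from Schur-idempotency of $A$;
\item the relation $A^{(\eta^{-1})}(c^\ast)=A^{(\eta)}(c)^\ast$;
\item the formula $\tilde{A}_g=\frac{1}{n}\sum_{\zeta}\zeta(g)A^{(\zeta)}$.
\end{enumerate}
The only deviation is that you read off $\tilde{A}_g(B^\alpha)\subset B^\alpha$ from the explicit formula at $\chi=1$. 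The paper instead proves $\alpha_h$-equivariance of $m(X_g\otimes A)m^\ast$ in Proposition~\ref{prop:Ag_well_defined}; your shortcut is fine. Two statements in your write-up need correcting, although neither breaks the argument.

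First, the Fourier transform of the convolution identity does \emph{not} give $\tilde{m}(\tilde{A}_g\otimes\tilde{A}_h)\tilde{m}^\ast=\delta_{g=h}\tilde{A}_g$. Applying $\frac{1}{n}\sum_\chi\chi(g)(\cdot)$ to both sides yields
\begin{align*}
\tilde{A}_g=\frac{1}{n^2}\sum_{\xi,\eta\in\hat{G}}\xi(g)\eta(g)\,\tilde{m}\bigl(A^{(\xi)}\otimes A^{(\eta)}\bigr)\tilde{m}^\ast=\tilde{m}(\tilde{A}_g\otimes\tilde{A}_g)\tilde{m}^\ast .
\end{align*}
This is only the diagonal case, which is exactly the Schur-idempotency you need. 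The off-diagonal vanishing is false in general. Take the $4$-cycle with $\mathbb{Z}_2$ acting by rotation through two steps. The quotient has two parallel edges between the two orbits, labelled $0$ and $1$, so $\tilde{A}_0=\tilde{A}_1\neq 0$ and their Schur product is nonzero. You are probably thinking of $m(X_g\otimes X_h)m^\ast=\delta_{g=h}X_g$, which does hold.

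Second, the sign in (b) needs no repair. From your own formula $\tilde{A}_g=\frac{1}{n}\sum_\xi\overline{\xi(g)}A^{(\xi^{-1})}$ one gets
\begin{align*}
\sum_{g\in G}\overline{\chi(g)}\tilde{A}_g=\frac{1}{n}\sum_{\xi\in\hat{G}}\Bigl(\sum_{g\in G}\overline{(\chi\xi)(g)}\Bigr)A^{(\xi^{-1})}=A^{(\chi)} .
\end{align*}
Hence $\tilde{A}\rtimes_{\hat{\alpha}}\hat{G}=A$ on the nose under $B=B^\alpha\rtimes_{\hat{\alpha}}\hat{G}$, as in the paper. You should settle this rather than leave it hedged. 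The fallbacks you list would change the data of the theorem and so would not prove the statement as posed:
\begin{enumerate}
\item relabelling $g\mapsto g^{-1}$ replaces $\tilde{A}$ from part (a) by a different voltage graph;
\item the family $(u_{\chi^{-1}})_\chi$ satisfies $\alpha_g(u_{\chi^{-1}})=\overline{\chi(g)}u_{\chi^{-1}}$ rather than the Landstad condition, which also changes $X_g$.
\end{enumerate}
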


\begin{remark}
    \begin{enumerate}
        \item The claim that $\tilde{A}$ is a voltage quantum graph entails that every singly map $\tilde{A}_g$ maps into $B^\alpha \subset B$. This is not immediately clear since the maps $m$ and $m^\ast$ are defined on the larger algebra $B$.
        \item Let us remark again that the operation
        \begin{align*}
            (A_1, A_2) \mapsto m(A_1 \otimes A_2) m^\ast
        \end{align*}
        intuitively means to take the intersection of quantum graphs as discussed in Remark \ref{der::rem:intersection_of_qgraphs}.
    \end{enumerate}
\end{remark}

Before we prove the theorem, we will investigate some properties of the maps $\tilde{A}_g$. First, we pick up the issue mentioned in the previous remark and show that the maps $\tilde{A}_g$ are well-defined as maps from $B^\alpha$ into $B^\alpha$.

\begin{proposition}
    \label{prop:Ag_well_defined}
    The maps $\tilde{A}_g$ are well-defined, i.e. we have for all $g \in G$,
    \begin{align*}
        m(X_g \otimes A) m^\ast(B^\alpha) \subset B^\alpha.
    \end{align*}
\end{proposition}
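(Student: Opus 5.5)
We show that $m(X_g \otimes A) m^\ast(b)$ is fixed by every $\alpha_h$ whenever $b \in B^\alpha$. The strategy is to verify that each of the three ingredients $m$, $m^\ast$ and $X_g$, as well as $A$, intertwines the action $\alpha$ in a controlled way, and then to compose these relations.

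\textbf{Step 1.} We collect equivariance relations. Since $\alpha_h$ is a $\ast$-automorphism, $\alpha_h m = m(\alpha_h \otimes \alpha_h)$. Since $\psi\alpha_h = \psi$, the map $\alpha_h$ is unitary on $L^2(B)$ with adjoint $\alpha_{h^{-1}}$. Taking adjoints of the previous relation therefore gives $m^\ast \alpha_h = (\alpha_h \otimes \alpha_h) m^\ast$. By assumption $\alpha_h A = A \alpha_h$, as $\alpha$ is an action on the quantum graph $A$.

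\textbf{Step 2.} We compute how $X_g$ transforms. We use $\alpha_h(u_\chi) = \chi(h) u_\chi$ and $\langle u_\chi, \alpha_{h^{-1}}(b)\rangle_\psi = \langle \alpha_h(u_\chi), b\rangle_\psi = \overline{\chi(h)}\langle u_\chi, b\rangle_\psi$, which holds by unitarity of $\alpha_h$. Then
\begin{align*}
    \alpha_h X_g \alpha_{h^{-1}}(b) = \frac{1}{n}\sum_{\chi} \overline{\chi(g)}\,\chi(h)\,\overline{\chi(h)}\,\langle u_\chi, b\rangle_\psi\, u_\chi = X_g(b).
\end{align*}
Hence $X_g$ commutes with every $\alpha_h$. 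The two phases cancel since $|\chi(h)|=1$. This phase cancellation is the only genuinely new point. I expect it to be the main subtlety, namely making sure that the sign conventions in the inner product (conjugate-linear in the first slot) produce $\overline{\chi(h)}$ rather than $\chi(h)$.

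\textbf{Step 3.} We combine the relations. For $b \in B^\alpha$ and $h \in G$,
\begin{align*}
    \alpha_h\, m(X_g \otimes A) m^\ast(b) = m(\alpha_h X_g \otimes \alpha_h A) m^\ast(b) = m(X_g \otimes A)(\alpha_h \otimes \alpha_h) m^\ast(b) = m(X_g \otimes A) m^\ast(\alpha_h(b)),
\end{align*}
and the last expression equals $m(X_g\otimes A)m^\ast(b)$. Thus the image lies in $B^\alpha$, which proves the claim. The map $m(X_g \otimes A) m^\ast$ in fact commutes with $\alpha$ on all of $B$, so no special structure of $B^\alpha$ beyond $\alpha$-invariance is needed.
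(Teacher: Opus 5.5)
Your proof is correct and follows essentially the same route as the paper. Both arguments show that $m$, $m^\ast$, $A$ and $X_g$ intertwine $\alpha_h$ and then compose these relations: $\psi\alpha_h=\psi$ gives $\alpha_h^\ast=\alpha_{h^{-1}}$, hence $m^\ast\alpha_h=(\alpha_h\otimes\alpha_h)m^\ast$. The only cosmetic difference is in checking $X_g\alpha_h=\alpha_h X_g$: the paper uses the identity $u_\chi^\dagger\alpha_h=\chi(h)u_\chi^\dagger$, while you conjugate by $\alpha_h$ and let the phases $\chi(h)\overline{\chi(h)}$ cancel, which is the same computation.
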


\begin{proof}
    It suffices to show $m(X_g \otimes A)m^\ast \alpha_h = \alpha_h m(X_g \otimes A)m^\ast$ for all $h \in G$. Evidently, $m(\alpha_h \otimes \alpha_h) = \alpha_h m$ holds since $\alpha_h$ is a $\ast$-automorphism. Further, one readily verifies $\alpha_h^\ast = \alpha_{h^{-1}}$ with respect to the inner product induced by $\psi$. By taking the adjoint, the previous identity entails $(\alpha_h \otimes \alpha_h) m^\ast = m^\ast \alpha_h$ for all $h \in G$. 
    Further, $A \alpha_h = \alpha_h A$ is true since $\alpha_h \in \mathrm{Aut}(A)$ by assumption.

    It only remains to show $X_g \alpha_h = \alpha_h X_g$ for all $h \in G$.
    A quick calculation reveals
    \begin{align*}
        u_\chi^\dagger \alpha_h = \chi(h) u_\chi^\dagger
    \end{align*}
    for all $\chi \in \hat{G}$ and $h \in G$. (Recall $u_\chi^\dagger = \psi(u_\chi^\ast \cdot)$). Consequently, we have
    \begin{align*}
        X_g \alpha_h 
            = \frac{1}{n} \sum_{\chi \in \hat{G}} \overline{\chi(g)} u_\chi u_\chi^\dagger \alpha_h
            = \frac{1}{n} \sum_{\chi \in \hat{G}} \overline{\chi(g)} \chi(h) u_\chi u_\chi^\dagger
            = \alpha_h X_g,
    \end{align*}
    since $\alpha_h(u_\chi) = \chi(h) u_\chi$ is true for all $\chi \in \hat{G}$.
    Altogether, we obtain the identity $m(X_g \otimes A)m^\ast \alpha_h = \alpha_h m(X_g \otimes A)m^\ast$ as desired.
\end{proof}

The next lemma establishes a connection between the maps $\tilde{A}_g$ and a family of maps $\tilde{A}_\chi: B^\alpha \to B^\alpha$ which looks like the equivalent of the $\tilde{A}_g$ under a kind of Fourier transform. These maps have convenient algebraic properties and yield an alternative definition of the maps $\tilde{A}_g$.

\begin{lemma}
    \label{lemma:technical_properties_of_Ag_Xg}
    There are linear maps $\tilde{A}_\chi: B^\alpha \to B^\alpha$ such that
    \begin{align}
        \label{eq:definition_of_tildeA_chi}
        A\left( \sum_{\chi \in \hat{G}} b u_\chi \right) 
            = \sum_{\chi \in \hat{G}} \tilde{A}_\chi(b) u_\chi
    \end{align}
    is true for all $b \in B^\alpha$, $\chi \in \hat{G}$.
    Then we have for all $b \in B^\alpha$, $g \in G$ and $\chi, \zeta \in \hat{G}$:
    \begin{align}
        \tilde{A}_\chi \hat{\alpha}_\zeta
            &= \hat{\alpha}_\zeta \tilde{A}_\chi,
            \label{eq:Ag_commutes_with_hatalpha} \\
        \tilde{A}_\chi(b)
            &= \frac{1}{n} \sum_{\zeta \in \hat{G}} \tilde{A}_\zeta\left(b_i^{(1)}\right) \tilde{A}_{\zeta^{-1} \chi}\left(b_i^{(2)}\right),
            \label{eq:convolution_formula_for_A_chi} \\
        \tilde{A}_\chi(b)^\ast
            &= \tilde{A}_{\chi^{-1}}(b^\ast),
            \label{eq:A_chi_and_the_star} \\
        \tilde{A}_g(b)
            &= \frac{1}{n} \sum_{\zeta \in \hat{G}} \zeta(g) \tilde{A}_\zeta(b).
            \label{eq:formula_for_tildeA}
    \end{align}
    where in the second line $\tilde{m}^\ast(b) = \sum_i b_i^{(1)} \otimes b_i^{(2)}$ and we sum implicitly over $i$.
\end{lemma}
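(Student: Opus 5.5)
We will read \eqref{eq:definition_of_tildeA_chi} as the requirement $A(b u_\chi) = \tilde{A}_\chi(b) u_\chi$ for all $b \in B^\alpha$ and $\chi \in \hat{G}$. The whole proof rests on one observation: $A$ preserves the spectral subspaces of $\alpha$.

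\textbf{Step 1: existence of $\tilde{A}_\chi$.} By Proposition \ref{pre::prop:landstad_for_quantum_sets}, every element of $B$ is uniquely $\sum_\chi b_\chi u_\chi$ with $b_\chi \in B^\alpha$. The $\chi$-spectral subspace $\{x \in B : \alpha_g(x) = \chi(g)x \ \forall g\}$ is exactly $B^\alpha u_\chi$. Since $\alpha_g A = A\alpha_g$, the map $A$ sends $B^\alpha u_\chi$ into $B^\alpha u_\chi$. Because $u_\chi$ is a unitary, there is a unique linear map $\tilde{A}_\chi : B^\alpha \to B^\alpha$ with $A(bu_\chi) = \tilde{A}_\chi(b)u_\chi$. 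Summing over $\chi$ gives \eqref{eq:definition_of_tildeA_chi}.

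\textbf{Step 2: equivariance \eqref{eq:Ag_commutes_with_hatalpha} and the star relation \eqref{eq:A_chi_and_the_star}.} The representation $(u_\chi)$ is of an abelian group, so $u_\zeta$ and $u_\chi$ commute. Hence $\mathrm{Ad}(u_\zeta)(bu_\chi) = \hat{\alpha}_\zeta(b)u_\chi$. Applying the assumption \eqref{gtt::eq:A_is_invariant_under_Ad_u_chi} and comparing coefficients of $u_\chi$ gives $\tilde{A}_\chi\hat{\alpha}_\zeta = \hat{\alpha}_\zeta\tilde{A}_\chi$.

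For the star relation, we use that $A$ is $\ast$-preserving together with $(bu_\chi)^\ast = \hat{\alpha}_{\chi^{-1}}(b^\ast)u_{\chi^{-1}}$ from Notation \ref{pre::notation:crossed_product}. This gives
\begin{align*}
\tilde{A}_{\chi^{-1}}\bigl(\hat{\alpha}_{\chi^{-1}}(b^\ast)\bigr)u_{\chi^{-1}} = \hat{\alpha}_{\chi^{-1}}\bigl(\tilde{A}_\chi(b)^\ast\bigr)u_{\chi^{-1}}.
\end{align*}
Pulling $\hat{\alpha}_{\chi^{-1}}$ out on the left by \eqref{eq:Ag_commutes_with_hatalpha} and cancelling this automorphism yields \eqref{eq:A_chi_and_the_star}.

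\textbf{Step 3: the convolution formula \eqref{eq:convolution_formula_for_A_chi}.} We expand the Schur idempotence $m(A\otimes A)m^\ast(bu_\chi) = A(bu_\chi)$ using the comultiplication formula \eqref{land::eq:comultiplication_formula_on_crossed_product}. The left-hand side becomes
\begin{align*}
\frac{1}{n}\sum_\xi \tilde{A}_\xi(b_i^{(1)})\,u_\xi\,\tilde{A}_{\xi^{-1}\chi}\bigl(\hat{\alpha}_{\xi^{-1}}(b_i^{(2)})\bigr)\,u_{\xi^{-1}\chi}.
\end{align*}
By \eqref{eq:Ag_commutes_with_hatalpha} the inner term equals $\hat{\alpha}_{\xi^{-1}}\bigl(\tilde{A}_{\xi^{-1}\chi}(b_i^{(2)})\bigr)$. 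Conjugating past $u_\xi$ cancels this automorphism, leaving $\frac1n\sum_\xi \tilde{A}_\xi(b_i^{(1)})\tilde{A}_{\xi^{-1}\chi}(b_i^{(2)})u_\chi$. Comparing coefficients of $u_\chi$ gives \eqref{eq:convolution_formula_for_A_chi}. This bookkeeping with $\hat{\alpha}$ and the $u$'s is the only delicate point, and it is the main thing I expect to need care.

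\textbf{Step 4: the relation \eqref{eq:formula_for_tildeA} between $\tilde{A}_g$ and $\tilde{A}_\chi$.} Here $\tilde{A}_g$ means $m(X_g\otimes A)m^\ast|_{B^\alpha}$, as in Proposition \ref{prop:Ag_well_defined}. Take $b \in B^\alpha$, which corresponds to $\chi = 1$, and apply \eqref{land::eq:comultiplication_formula_on_crossed_product} again. Formula \eqref{der::eq:formula_for_X_g} gives $X_g(b_i^{(1)}u_\xi) = \overline{\xi(g)}\tilde{\psi}(b_i^{(1)})u_\xi$, so we obtain
\begin{align*}
\frac1n\sum_\xi \overline{\xi(g)}\,\tilde{\psi}(b_i^{(1)})\,u_\xi\,\tilde{A}_{\xi^{-1}}\bigl(\hat{\alpha}_{\xi^{-1}}(b_i^{(2)})\bigr)\,u_{\xi^{-1}}.
\end{align*}
As in Step 3, \eqref{eq:Ag_commutes_with_hatalpha} lets $u_\xi$ cancel the automorphism. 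The counit property in Proposition \ref{pre::prop:properties_of_counit_and_comultiplication} then reduces the expression to $\frac1n\sum_\xi\overline{\xi(g)}\tilde{A}_{\xi^{-1}}(b)$. Substituting $\zeta = \xi^{-1}$, so that $\overline{\xi(g)} = \zeta(g)$, yields \eqref{eq:formula_for_tildeA}.
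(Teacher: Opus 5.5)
Your proof is correct and follows essentially the same route as the paper: spectral-subspace invariance gives existence, $\mathrm{Ad}(u_\zeta)$-invariance of $A$ gives equivariance, and the crossed-product comultiplication formula together with the counit property yields the convolution formula and the formula for $\tilde{A}_g$, while the star relation is handled in the same way. The differences are only cosmetic. You prove the star relation before the convolution formula. You also state explicitly two things the paper leaves implicit: that the $u_\chi$ commute, and that $\tilde{A}_g$ should be read as $m(X_g\otimes A)m^\ast|_{B^\alpha}$.
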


\begin{proof}
    Recall that $\alpha$ respects the quantum adjacency matrix $A$ in the sense that $A \alpha_g = \alpha_g A$ holds for all $g \in G$. It follows immediately that $A$ preserves the subalgebras $B^\alpha u_\chi$, i.e. 
    \begin{align*}
        A(B^\alpha u_\chi) \subset B^\alpha u_\chi
    \end{align*}
    holds for all $\chi \in \hat{G}$. Therefore, the maps $\tilde{A}_\chi$ from \eqref{eq:definition_of_tildeA_chi} exist.

    Ad \eqref{eq:Ag_commutes_with_hatalpha}.
    Using the definition of $\tilde{A}_\chi$ and $A \hat{\alpha}_\zeta = \hat{\alpha}_\zeta A$ one calculates for all $b \in B^\alpha$
    \begin{align*}
        \tilde{A}_\chi\left(\hat{\alpha}_\zeta(b)\right)
            = A(\hat{\alpha}_\zeta(b) u_\chi) u_{\chi^{-1}}
            = \hat{\alpha}_\zeta\left(A(b u_\chi)\right) u_{\chi^{-1}}
            = \hat{\alpha}_\zeta\left(\tilde{A}_\chi(b)\right).
    \end{align*}
    
    Ad \eqref{eq:convolution_formula_for_A_chi}.
    Since $A$ is a quantum adjacency matrix, we have $m(A \otimes A)m^\ast = A$. Using the formula \eqref{land::eq:comultiplication_formula_on_crossed_product} for the comultiplication on $B$ we get for all $b \in B^\alpha, \chi \in \hat{G}$,
    \begin{align*}
        A(b u_\chi)
            &= \frac{1}{n} \sum_{\zeta \in \hat{G}} A\left( b_i^{(1)} u_\zeta \right) A\left( \hat{\alpha}_{\zeta^{-1}}\left(b_i^{(2)}\right) u_{\zeta^{-1} \chi} \right) \\
            &= \frac{1}{n} \sum_{\zeta \in \hat{G}} \tilde{A}_\zeta\left(b_i^{(1)}\right)\tilde{\alpha}_{\zeta}\left(\tilde{A}_{\zeta^{-1} \chi}\left(\hat{\alpha}_{\zeta^{-1}}\left(b_i^{(2)}\right)\right)\right) u_\zeta u_{\zeta^{-1} \chi} \\
            &= \frac{1}{n} \sum_{\zeta \in \hat{G}} \tilde{A}_\zeta\left(b_i^{(1)}\right)\tilde{A}_{\zeta^{-1} \chi}\left(b_i^{(2)}\right) u_\chi,
    \end{align*}
    where we use $\tilde{A}_{\zeta^{-1}\chi} \hat{\alpha}_{\zeta^{-1}} = \hat{\alpha}_{\zeta^{-1}} \tilde{A}_{\zeta^{-1}\chi}$ in the second step. Comparing this with \eqref{eq:definition_of_tildeA_chi} yields the desired formula.

    Ad \eqref{eq:A_chi_and_the_star}.
    Since $A$ is a quantum adjacency matrix it is $\ast$-preserving in the sense that $A\left(\left(b u_\chi\right)^\ast\right) = A(b u_\chi)^\ast$ holds for all $b \in B^\alpha$ and $\chi \in \hat{G}$. With the definition of $\tilde{A}_\chi$ it follows
    \begin{align*}
        \tilde{A}_{\chi^{-1}}(\hat{\alpha}_{\chi^{-1}}(b^\ast)) u_{\chi^{-1}}
            &= A(\hat{\alpha}_{\chi^{-1}}(b^\ast) u_{\chi^{-1}}) 
            = A\left((b u_\chi)^\ast\right) \\
            &= A(b u_\chi)^\ast 
            = \left( \tilde{A}_\chi(b) u_\chi \right)^\ast 
            = \hat{\alpha}_{\chi^{-1}}\left(\tilde{A}_\chi(b)^\ast\right) u_{\chi^{-1}}.
    \end{align*}
    The claim follows by using $\tilde{A}_{\chi^{-1}} \hat{\alpha}_{\chi^{-1}} = \hat{\alpha}_{\chi^{-1}} \tilde{A}_{\chi^{-1}}$ and applying $\hat{\alpha}_\chi$ on both sides.

    Ad \eqref{eq:formula_for_tildeA}. 
    Let us use the definition of $\tilde{A}_g$ together with the formula \eqref{land::eq:comultiplication_formula_on_crossed_product} for the comultiplication on $B$ and \eqref{der::eq:formula_for_X_g} to obtain
    \begin{align*}
        \tilde{A}_g(b) 
            &= m(X_g \otimes A) m^\ast(b) \\
            &= \frac{1}{n} \sum_{\zeta \in \hat{G}} X_g(b_i^{(1)} u_\zeta) A\left( \hat{\alpha}_{\zeta^{-1}}\left(b_i^{(2)}\right) u_{\zeta^{-1}}\right) \\
            &= \frac{1}{n} \sum_{\zeta \in \hat{G}} \overline{\zeta(g)} \tilde{\psi}(b_i^{(1)}) u_\zeta A\left( \hat{\alpha}_{\zeta^{-1}}\left(b_i^{(2)}\right) u_{\zeta^{-1}}\right). 
    \end{align*}
    Since $A$ commutes with the action $\hat{\alpha}$ the properties of the comultiplication on $B$ imply that this expression is equal to
    \begin{align*}
        \frac{1}{n} \sum_{\zeta \in \hat{G}} \overline{\zeta(g)} \tilde{\psi}(b_i^{(1)}) A_{\zeta^{-1}}\left( b_i^{(2)} \right)
            &= \frac{1}{n} \sum_{\zeta \in \hat{G}} \overline{\zeta(g)} \tilde{A}_{\zeta^{-1}}(b).
    \end{align*}
    The statement follows by a change of variable.
\end{proof}

We are now ready to prove the main theorem of this section which has been stated before as Theorem \ref{gtt::thm:quantum_gross_tucker_theorem}.

\begin{proof}[Proof of Theorem \ref{gtt::thm:quantum_gross_tucker_theorem}]
    We first show that each $\tilde{A}_g$ is a quantum adjacency matrix on $(B^\alpha, \tilde{\psi})$ which is invariant under the action $\hat{\alpha}$.
    To start with, let us verify that $\tilde{A}_g$ is Schur-idempotent. Using formula \eqref{eq:formula_for_tildeA} from the previous lemma, we get for all $b \in B^\alpha$,
    \begin{align*}
        \tilde{m}\left(\tilde{A}_g \otimes \tilde{A}_g\right) \tilde{m}^\ast(b)
            &= \frac{1}{n^2} \sum_{\chi, \xi \in \hat{G}} \chi(g) A_\chi(b_i^{(1)}) \xi(g) A_\xi(b_i^{(2)}) \\
            &= \frac{1}{n^2} \sum_{\eta \in \hat{G}} \eta(g) \sum_{\chi, \xi: \chi \xi = \eta} A_\chi(b_i^{(1)}) A_\xi(b_i^{(2)}).
    \end{align*}
    By \eqref{eq:convolution_formula_for_A_chi} we have $\frac{1}{n} \sum_{\chi, \xi: \chi \xi = \eta} A_\chi(b_i^{(1)}) A_\xi(b_i^{(2)}) = A_\eta(b)$ for all $\eta \in \hat{G}$. Thus, the above is equal to
    \begin{align*}
        \frac{1}{n} \sum_{\eta \in \hat{G}} \eta(g) A_\eta(b) = \tilde{A}_g(b).
    \end{align*} 
    Next, we show that $\tilde{A}_g$ is $\ast$-preserving. Using \eqref{eq:A_chi_and_the_star} and the formula \eqref{eq:formula_for_tildeA} from the previous lemma one gets for all $b \in B^\alpha$ and $g \in G$,
    \begin{align*}
        \tilde{A}_g(b^\ast)  
            &= \frac{1}{n} \sum_{\zeta \in \hat{G}} \zeta(g) \tilde{A}_\zeta(b^\ast) \\
            &= \frac{1}{n} \sum_{\zeta \in \hat{G}} \zeta(g) \tilde{A}_{\zeta^{-1}}(b)^\ast \\
            &= \left( \frac{1}{n} \sum_{\zeta \in \hat{G}} \zeta^{-1}(g) \tilde{A}_{\zeta^{-1}}(b) \right)^\ast \\
            &= \tilde{A}_g(b)^\ast,
    \end{align*}
    as desired. Altogether, it follows that $\tilde{A}_g$ is a quantum adjacency matrix. For $\tilde{A}$ being a voltage graph we also need $\tilde{A}_g \hat{\alpha}_\chi = \hat{\alpha}_\chi \tilde{A}_g$ for all $\chi \in \hat{G}$. However, this follows immediately by combining \eqref{eq:Ag_commutes_with_hatalpha} and \eqref{eq:formula_for_tildeA}.

    It remains to prove $A = \tilde{A} \rtimes_{\hat{\alpha}} \hat{G}$, i.e.
    \begin{align*}
        A = \sum_{g \in G} m(X_{g} \otimes E_\rtimes^\ast \tilde{A}_g E_\rtimes) m^\ast
    \end{align*}
    Using the formula for the comultiplication \eqref{land::eq:comultiplication_formula_on_crossed_product} as well as \eqref{eq:formula_for_E*} one immediately obtains for all $b \in B^\alpha$ and $\chi \in \hat{G}$,
    \begin{align*}
        m(X_{g} \otimes E_\rtimes^\ast \tilde{A}_g E_\rtimes) m^\ast(b u_\chi)
            &= \frac{1}{n} \sum_{\zeta \in \hat{G}} X_g\left(b_i^{(1)} u_\zeta\right) \, n \delta_{\zeta^{-1}\chi = 1} \tilde{A}_g\left(\hat{\alpha}_{\zeta^{-1}}(b_i^{(2)})\right) \\
            &= X_g\left(b_i^{(1)} u_\chi\right) \tilde{A}_g\left(\hat{\alpha}_{\chi^{-1}}(b_i^{(2)})\right) \\
            &= \overline{\chi({g})} \tilde{\psi}\left(b_i^{(1)}\right) \tilde{A}_g\left(b_i^{(2)}\right) u_\chi,
    \end{align*}
    where we use \eqref{der::eq:formula_for_X_g} in the last step. As $\left(\tilde{\psi} \otimes \mathrm{Id}_{B^\alpha}\right) \tilde{m}^\ast = \mathrm{Id}_{B^\alpha}$, the above reduces to
    \begin{align*}
        \overline{\chi(g)} \tilde{A}_g(b) u_\chi.
    \end{align*}
    By summing over all $g \in G$ and using the formula \eqref{eq:formula_for_tildeA} for $\tilde{A}_g$ we arrive at
    \begin{align*}
        \sum_{g \in G} m^\ast(X_g \otimes E_\rtimes^\ast \tilde{A}_g E_\rtimes) m(b u_\chi)
            &= \frac{1}{n} \sum_{g \in G} \sum_{\zeta \in \hat{G}} \overline{\chi(g)} \zeta(g) A_\zeta(b) u_\chi \\
            &= A_\chi(b) u_\chi,
    \end{align*}
    for $\sum_{g \in G} \chi(g) \overline{\zeta(g)} = n \delta_{\zeta = \chi}$.
    This concludes the proof.
\end{proof}

\begin{remark}
    If $A$ is in fact a classical graph (aka a quantum graph on $(\C^V, \psi_V)$ for some finite set $V$), then an action $\alpha: G \to \mathrm{Aut}(A)$ satisfies the conditions of Landstad's theorem if and only if it is a free action. Moreover, the notorious operators $\mathrm{Ad}(u_\chi)$ on $\C^V$ are trivial, and therefore the additional conditions
    \begin{align*}
        \psi_V = \psi_V \, \mathrm{Ad}(u_\chi) 
        \quad \text{and} \quad
        A \, \mathrm{Ad}(u_\chi) = \mathrm{Ad}(u_\chi) A
    \end{align*}
    for Theorem \ref{gtt::thm:quantum_gross_tucker_theorem} are automatically satisfied. The quantum set $(B^\alpha, \tilde{\psi})$ is clearly isomorphic to $(\C^{V/G}, \psi_{V/G})$ and the induced action $\hat{\alpha}$ is trivial. Thus, in the classical case our Gross--Tucker theorem just says that a free action of a finite abelian group on a graph $A$ gives rise to a voltage graph structure on the quotient graph $A/G$ such that the derived graph is isomorphic to $A$. This recovers exactly the classical Gross--Tucker theorem.
\end{remark}

\section{Quantum graphs on \texorpdfstring{$M_2$}{M2} as derived quantum graphs}
\label{sec::examples}

Quantum graphs over the quantum space $M_2$ have been extensively studied~\cite{matsuda_classification_2022,gromada_examples_2022, kiefer_complete_2025}. Let us recall the classification of undirected loopfree graphs on $(M_2,2{\rm Tr})$, which was discovered independently by Matsuda~\cite{matsuda_classification_2022} and Gromada~\cite{gromada_examples_2022}.
\begin{proposition}[{\cite[Proposition~3.14]{gromada_examples_2022}}]\label{prop::gromada_tweaked}
    Up to isomorphism, there are three nonempty undirected loopfree quantum graphs on $(M_2,2{\rm Tr})$ whose quantum adjacency matrices read:
    \begin{enumerate}
\item $A=P_3$,
\item $A=P_3+P_2$,
\item $A=P_3+P_2+P_1$,
    \end{enumerate}
    where
  \[
P_1=\begin{pmatrix}
    0 & 0 & 0 & 1\\
    0 & 0 & 1 & 0\\
    0 & 1 & 0 & 0\\
    1 & 0 & 0 & 0
\end{pmatrix},~
P_2=\begin{pmatrix}
    0 & 0 & 0 & 1\\
    0 & 0 & -1 & 0\\
    0 & -1 & 0 & 0\\
    1 & 0 & 0 & 0
\end{pmatrix},~
P_3=\begin{pmatrix}
    1 & 0 & 0 & 0\\
    0 & -1 & 0 & 0\\
    0 & 0 & -1 & 0\\
    0 & 0 & 0 & 1
\end{pmatrix}
\]
are matrices given with respect to the standard basis of $M_2$.
\end{proposition}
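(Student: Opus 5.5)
The plan is to pass from quantum adjacency matrices on $(M_2, 2\mathrm{Tr})$ to an equivalent, more rigid description, classify that description up to isomorphism, and then translate back to explicit matrices.

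\textbf{Step 1: subspace picture.} By the equivalence of definitions recalled in the remark on alternative definitions of quantum graphs, a quantum graph on $(M_2, 2\mathrm{Tr})$ corresponds to a subspace $S \subset M_2$. This $S$ must be a bimodule over the commutant of $M_2$ in $B(L^2(M_2))$, and that commutant is trivial here, so $S$ is simply a subspace of $M_2$.

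Concretely, I would use the correspondence
\begin{align*}
    A = \sum_k \mathrm{Ad}(s_k) \;\longleftrightarrow\; S = \mathrm{span}\{s_k\},
\end{align*}
where $(s_k)$ is an orthonormal basis of $S$ for the Hilbert--Schmidt inner product, normalized so that the $2\mathrm{Tr}$-scaling makes $m(A \otimes A)m^\ast = A$. Under this correspondence I expect the properties to translate as follows, to be checked with $m^\ast$ computed on matrix units:
\begin{itemize}
    \item undirectedness ($A = A^\ast$) becomes $S = S^\ast$;
    \item loopfreeness ($m(A\otimes \mathrm{Id})m^\ast = 0$) becomes $S \perp 1$;
    \item the $\ast$-preserving condition is automatic for this form.
\end{itemize}
An isomorphism of quantum graphs on $(M_2, 2\mathrm{Tr})$ is an automorphism of $M_2$, hence $\mathrm{Ad}(U)$ with $U$ unitary, and it acts on $S$ by $S \mapsto USU^\ast$.

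\textbf{Step 2: classification.} A self-adjoint subspace orthogonal to $1$ is the complexification of a real subspace of the traceless Hermitian matrices $\mathrm{span}_\mathbb{R}\{\sigma_x, \sigma_y, \sigma_z\} \cong \mathbb{R}^3$. Conjugation by $SU(2)$ acts on this space as $SO(3)$, which is transitive on real subspaces of each fixed dimension. Hence the nonempty cases are classified by $\dim S \in \{1, 2, 3\}$, with representatives
\begin{align*}
    \mathrm{span}\{\sigma_z\}, \quad \mathrm{span}\{\sigma_z, \sigma_y\}, \quad \mathrm{span}\{\sigma_z, \sigma_y, \sigma_x\}.
\end{align*}
Distinct dimensions give non-isomorphic graphs, since $\dim S$ equals the rank of the corresponding projection and is an invariant.

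\textbf{Step 3: explicit matrices.} Compute $\mathrm{Ad}(\sigma)$ on the basis $E_{11}, E_{12}, E_{21}, E_{22}$:
\begin{itemize}
    \item $\mathrm{Ad}(\sigma_z) = \mathrm{diag}(1, -1, -1, 1) = P_3$;
    \item $\mathrm{Ad}(\sigma_x)$ swaps $E_{11} \leftrightarrow E_{22}$ and $E_{12} \leftrightarrow E_{21}$, giving $P_1$;
    \item $\mathrm{Ad}(\sigma_y)$ swaps $E_{11} \leftrightarrow E_{22}$ and sends $E_{12} \mapsto -E_{21}$, $E_{21} \mapsto -E_{12}$, giving $P_2$.
\end{itemize}
Summing over the chosen bases then yields exactly $P_3$, $P_3 + P_2$ and $P_3 + P_2 + P_1$. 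As a consistency check, $A = P_3$ should be Schur-idempotent for the scaling coming from $\psi = 2\mathrm{Tr}$; this fixes that no extra scalar factor is needed.

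\textbf{Main obstacle.} The delicate point is Step 1: setting up the dictionary between $A$ and $S$ with the correct normalization for $2\mathrm{Tr}$, and making sure that "undirected" and "loopfree" in the paper's sense match $S = S^\ast$ and $S \perp 1$. I would verify this directly by writing $m^\ast(E_{ij}) = \tfrac12 \sum_k E_{ik} \otimes E_{kj}$ for $2\mathrm{Tr}$ and computing $m(\mathrm{Ad}(s) \otimes \mathrm{Ad}(t))m^\ast$ on matrix units.
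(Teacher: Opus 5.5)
Your proof is correct. The paper gives no argument for this statement of its own: it quotes it from Gromada (found independently by Matsuda). So there is nothing to match step by step. Your route is essentially the standard classification behind those references. It passes to the edge space, uses undirected $\Leftrightarrow S=S^\ast$ and loopfree $\Leftrightarrow S\perp 1$, then transitivity of $PU(2)\cong SO(3)$ on real subspaces of the traceless Hermitian matrices, and finally the computation $P_j=\mathrm{Ad}(\sigma_j)$. It also explains the paper's subsequent remark that the three matrices are $\sigma_3\otimes\sigma_3^\ast$, $\sigma_2\otimes\sigma_2^\ast+\sigma_3\otimes\sigma_3^\ast$, and so on.

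Your normalization and your translations of the graph properties hold. With $m^\ast(E_{ij})=\tfrac12\sum_k E_{ik}\otimes E_{kj}$ one gets $m\bigl(a(\cdot)b^\ast\otimes c(\cdot)d^\ast\bigr)m^\ast=\tfrac12\mathrm{Tr}(b^\ast c)\,a(\cdot)d^\ast$. This yields all three claims in your Step 1 and shows the Pauli matrices have exactly the right scaling.

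Two small corrections to Step 1. First, the commutant of $M_2$ in $B(L^2(M_2))$ is the algebra of right multiplications, isomorphic to $M_2^{\mathrm{op}}$, not $\C$. The trivial-commutant statement holds for the defining representation on $\C^2$, which is the one you actually want.

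Second, you do not need the literature dictionary at all. Write an arbitrary $A=\sum_{k,l}c_{kl}\,e_k(\cdot)e_l^\ast$ in an orthonormal basis $(e_k)$ of $M_2$ for $\tfrac12\mathrm{Tr}$ (say $1,\sigma_1,\sigma_2,\sigma_3$), with coefficient matrix $C=(c_{kl})$. The formula above shows that $m(A\otimes A')m^\ast$ corresponds to the product $CC'$ of coefficient matrices, and that $\ast$-preservation corresponds to $C=C^\ast$. Hence the quantum adjacency matrices on $(M_2,2\mathrm{Tr})$ are exactly the maps $\sum_k\mathrm{Ad}(s_k)$ with $(s_k)$ orthonormal for $\tfrac12\mathrm{Tr}$. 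Here $S=\mathrm{span}\{s_k\}$ is the range of the projection $C$, transported to $M_2$ via the basis $(e_k)$. This settles the obstacle you flagged in a few lines.
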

\begin{remark}
It is worth noting that the quantum adjacency matrices from the points (a)--(c) in the above proposition are equal to $\sigma_3\otimes\sigma_3^*$, $\sigma_2\otimes\sigma_2^*+\sigma_3\otimes\sigma_3^*$, and $\sigma_1\otimes\sigma_1^*+\sigma_2\otimes\sigma_2^*+\sigma_3\otimes\sigma_3^*$, respectively, where
\[
\sigma_1=\begin{pmatrix}0 & 1\\1 & 0\end{pmatrix},\qquad \sigma_2=\begin{pmatrix}0 & -i\\i & 0\end{pmatrix}, \qquad \sigma_3=\begin{pmatrix}1 & 0\\0 & -1\end{pmatrix},
\]
are the Pauli matrices.
\end{remark}

In this section, we show that all three non-iso\-morphic (nonempty, undirected, loopfree) quantum graphs on $(M_2(\mathbb{C}),2{\rm Tr})$ can be obtained as derived quantum graphs of classical (directed) voltage graphs with two vertices. We emphasize that neither classical nor quantum voltage graphs need to be simple loopfree graphs, which gives us greater flexibility. In fact, our final example will include both loops and multiple edges between vertices. Throughout this section we denote the elements of $\mathbb{Z}_2$ by $\{0,1\}$ and its dual $\hat{\mathbb{Z}}_2$ by $\{\chi_0,\chi_1\}$. We warn the reader that the element $1$ is no longer the identity element.

\begin{proposition}
    \label{ex::prop:qgraphs_on_M2_as_derived_qgraphs}
    Up to isomorphism, all undirected loopfree quantum graphs on $(M_2,2{\rm Tr})$ can be obtained as derived quantum graphs of classical $\mathbb{Z}_2$-voltage graphs on two vertices. In particular, they are quantum isomorphic to classical graphs on four vertices.
\end{proposition}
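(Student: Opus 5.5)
The plan is to realize $(M_2,2\mathrm{Tr})$ as a crossed product quantum set and then compute the derived quantum graphs of all candidate voltage graphs on two vertices. Take $\tilde{B}=\C^2$ with $\psi_2$, and let $\hat{\alpha}$ be the action of $\hat{\Z}_2=\{\chi_0,\chi_1\}$ in which $\chi_1$ swaps $e_1$ and $e_2$; clearly $\psi_2$ is invariant. We will exhibit an explicit isomorphism $(\C^2,\psi_2)\rtimes_{\hat\alpha}\hat{\Z}_2\cong(M_2,2\mathrm{Tr})$ given by $e_1\mapsto E_{11}$, $e_2\mapsto E_{22}$, $u_{\chi_1}\mapsto\sigma_1$. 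The relation $\sigma_1 E_{11}\sigma_1=E_{22}$ shows that this is a $*$-isomorphism. The functional matches: $\psi_\rtimes(b u_\chi)=2\delta_{\chi=\chi_0}\psi_2(b)$, while $2\mathrm{Tr}(E_{ii}\sigma_1)=0$ and $2\mathrm{Tr}(E_{ii})=2$.

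Next we classify voltage quantum graphs with respect to $\hat\alpha$. These are pairs $(\tilde A_0,\tilde A_1)$ of classical adjacency matrices on two vertices that commute with the swap. Each $\tilde A_g$ is therefore a $0/1$ combination of $I$ and $\sigma_1$, that is: empty, loops at both vertices, both directed edges, or complete with loops.

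For each such pair we compute $A=\tilde A\rtimes_{\hat\alpha}\hat{\Z}_2$ using formula \eqref{der::eq:formula_for_m(X_g_otimes_EstarAE)mstar}:
\[
A(bu_\chi)=\sum_g\overline{\chi(g)}\tilde A_g(b)u_\chi .
\]
In the basis $E_{11},E_{22},E_{12},E_{21}$, which corresponds to $e_1,e_2,e_1u_{\chi_1},e_2u_{\chi_1}$, this makes $A$ block diagonal. The $u_{\chi_0}$ block is $\tilde A_0+\tilde A_1$ and the $u_{\chi_1}$ block is $\tilde A_0-\tilde A_1$. Proposition \ref{prop:voltage_to_derived_properties} tells us what to impose. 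Loopfreeness requires $\tilde A_0$ to have no loops, so $\tilde A_0\in\{0,\sigma_1\}$. Undirectedness holds automatically, since $\Z_2$ has only involutions and the $\tilde A_g$ are symmetric.

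We then match the remaining cases to Proposition \ref{prop::gromada_tweaked}, rewriting $P_1,P_2,P_3$ in the reordered basis.
\begin{itemize}
\item The choice $\tilde A_0=0,\ \tilde A_1=I$ gives $A=\mathrm{diag}(1,1,-1,-1)=P_3$.
\item The choice $\tilde A_0=\sigma_1,\ \tilde A_1=I$ gives blocks $I+\sigma_1$ and $\sigma_1-I$, which should be $P_3+P_1$ or $P_3+P_2$ up to the conjugation isomorphism. $P_1$ mixes $E_{11}\leftrightarrow E_{22}$ and $E_{12}\leftrightarrow E_{21}$ with sign $+$, and $P_2$ mixes them with signs $+,-$.
\item The choice $\tilde A_0=\sigma_1,\ \tilde A_1=I+\sigma_1$ (loops together with a double edge) gives blocks $I+2\sigma_1$ and $-I$. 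This should be $P_1+P_2+P_3$.
\end{itemize}
If a sign convention produces $P_3+P_1$ instead of $P_3+P_2$, these are isomorphic via $\mathrm{Ad}$ of a Pauli/phase unitary preserving $2\mathrm{Tr}$, which is within "up to isomorphism". The main obstacle is this bookkeeping: getting the explicit matrix of $A$ in the standard basis of $M_2$ right, including signs and the ordering of $E_{12},E_{21}$, and identifying the right unitary conjugation that relates it to Gromada's normal forms.

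Finally, Corollary \ref{qiso::cor:quantum_isomorphism_between_classical_derived_graph_and_quantum_graph}, i.e. Theorem \ref{qiso::thm:quantum_isomorphism_between_derived_graphs}, gives $A\cong_q\tilde A\rtimes_{\mathrm{triv}}\hat{\Z}_2$. Lemma \ref{der::lemma:identification_of_derived_graph_components_classical_case} identifies the right-hand side with a classical graph on $V\times\Z_2$, which has four vertices. This yields the last claim.
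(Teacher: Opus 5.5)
Your proposal is correct and follows essentially the paper's proof. You realize $(M_2,2\mathrm{Tr})$ as $(\C^2,\psi_2)\rtimes_{\hat\alpha}\hat{\Z}_2$ via the swap action, compute derived graphs of swap-invariant voltage graphs (your third one, $(\sigma_1,I+\sigma_1)$, is exactly the paper's and gives $P_1+P_2+P_3$), and conclude with Theorem~\ref{qiso::thm:quantum_isomorphism_between_derived_graphs}.

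Two small points need fixing. First, your choice $(\sigma_1,I)$ gives exactly $P_3+P_1$. This is intertwined with $P_3+P_2$ by the quantum set automorphism $\mathrm{Ad}(\mathrm{diag}(1,i))$. Conjugating by a Pauli matrix fixes each $P_j$, so it would not suffice. The paper instead uses $(0,I+\sigma_1)$, which gives $P_3+P_2$ directly. Second, you should also record the empty graph, which arises from $(0,0)$.
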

\begin{proof}
First, we write $(M_2,2{\rm Tr})$ as a crossed product quantum set. To this end, consider the quantum set $(\mathbb{C}^2,\psi_2)$ as in Example \ref{pre::ex:classical_graphs_as_quantum_graphs}. There is a~natural action $\hat{\alpha}$ of $\hat{\mathbb{Z}}_2$ on $\mathbb{C}^2$ given by the automorphism
\[
\hat{\alpha}_{\chi_1}:\begin{pmatrix}z_1\\z_2\end{pmatrix}\longmapsto \begin{pmatrix}z_2\\z_1\end{pmatrix},\qquad z_1,z_2\in\mathbb{C}.
\]
Note that $\psi_2$ is invariant under this action. Recall that the crossed product algebra $\mathbb{C}^2\rtimes_{\hat{\alpha}}\hat{\mathbb{Z}}_2$ is isomorphic with $M_2$ via the isomorphism induced by the following assignments
\[
\begin{pmatrix}1\\0\end{pmatrix}\longmapsto \begin{pmatrix}1 & 0\\0 & 0\end{pmatrix},\qquad \begin{pmatrix}0\\1\end{pmatrix}\longmapsto \begin{pmatrix}0 & 0\\ 0 & 1\end{pmatrix},\qquad u_{\chi_1}\longmapsto \begin{pmatrix}0 & 1\\1 & 0\end{pmatrix}.
\]
Furthermore, $(\mathbb{C}^2\rtimes_{\hat{\alpha}}\hat{\mathbb{Z}}_2,\psi_\rtimes)$ and $(M_2(\mathbb{C}),2{\rm Tr})$ are isomorphic as quantum sets.

A classical $\mathbb{Z}_2$-voltage graph with respect to the action $\hat{\alpha}$ is nothing else but a pair $\tilde{A}=(\tilde{A}_0,\tilde{A}_1)$, where $\tilde{A}_0$ and $\tilde{A}_1$ are adjacency matrices on two vertices satisfying
\begin{equation}\label{ex::adjmat1}
\tilde{A}_0\hat{\alpha}_{\chi_1}=\hat{\alpha}_{\chi_1}\tilde{A}_0\qquad\text{and}\qquad \tilde{A}_1\hat{\alpha}_{\chi_1}=\hat{\alpha}_{\chi_1}\tilde{A}_1\,.
\end{equation}
Then the quantum adjacency matrix of the derived quantum graph on $(\mathbb{C}^2\rtimes_{\hat{\alpha}}\hat{\mathbb{Z}}_2,\psi_\rtimes)$, where $\psi_\rtimes$ is as in Definition~\ref{crossed::def:crossed_product_quantum_set}, is given by the formula
\[
A=m(X_0\otimes E^*_\rtimes \tilde{A}_0E_\rtimes)m^*+m(X_1\otimes E^*_\rtimes \tilde{A}_1E_\rtimes)m^*\\
\]
where $E_\rtimes$ is the scaled conditional expectation (see Notation~\ref{pre::notation:crossed_product_quantum_sets}) and
\[
X_0=\begin{pmatrix}1 & 0 & 0 & 1\\ 0 & 1 & 1 & 0 \\ 0 & 1 & 1 & 0\\ 1 & 0 & 0 & 1\end{pmatrix}\qquad\text{and}\qquad X_1=\begin{pmatrix}1 & 0 & 0 & 1\\ 0 & -1 & -1 & 0 \\ 0 & -1 & -1 & 0\\ 1 & 0 & 0 & 1\end{pmatrix}
\]
(see Definition~\ref{der::def:derived_quantum_graph} for details.)

To prove the proposition, let us first observe that the empty graph on $(M_2,2{\rm Tr})$ is given by the derived quantum graph of a graph with two vertices and no edges. Next, the three quantum graphs from Proposition~\ref{prop::gromada_tweaked} can be recovered as follows:

\begin{figure}[htb]
        \begin{tikzpicture}[
            node distance=1cm,
            vertex/.style={circle, fill, inner sep=1pt},
            >=Stealth
          ]
          \begin{scope}
            \node[vertex, label=below:$v_0$] (a0) at (0,0) {};
            \node[vertex, label=below:$v_1$] (b0) [right=of a0] {};
            \path[->] (a0) edge[loop, in=180, out=90, looseness=40] node[above]{$1$} (a0);
            \path[->] (b0) edge [loop, in=0, out=90, looseness=40] node[above]{$1$} (b0);
          \end{scope}
\end{tikzpicture}\quad 
\begin{tikzpicture}[
            node distance=1cm,
            vertex/.style={circle, fill, inner sep=1pt},
            >=Stealth
          ]
          \begin{scope}
            \node[vertex, label=below:$v_0$] (a0) at (0,0) {};
            \node[vertex, label=below:$v_1$] (b0) [right=of a0] {};
            \path[->] (a0) edge[loop, in=180, out=90, looseness=40] node[above]{$1$} (a0);
            \path[->] (b0) edge [loop, in=0, out=90, looseness=40] node[above]{$1$} (b0);
            \path[->] (a0) edge [bend left] node[above]{$1$} (b0);
            \path[->] (b0) edge [bend left] node[below]{$1$} (a0);
          \end{scope}
\end{tikzpicture}\quad 
\begin{tikzpicture}[
            node distance=1cm,
            vertex/.style={circle, fill, inner sep=1pt},
            >=Stealth
          ]
          \begin{scope}
            \node[vertex, label=below left:$v_0$] (a0) at (0,0) {};
            \node[vertex, label=below right:$v_1$] (b0) [right=of a0] {};
            \path[->] (a0) edge[loop, in=180, out=90, looseness=40] node[above]{$1$} (a0);
            \path[->] (b0) edge [loop, in=0, out=90, looseness=40] node[above]{$1$} (b0);
            \path[->] (a0) edge [bend left=30] node[above]{$1$} (b0);
            \path[->] (b0) edge [bend left=30] node[below]{$1$} (a0);
            \path[->] (a0) edge [bend left=90, looseness=2] node[above]{$0$} (b0);
            \path[->] (b0) edge [bend left=90, looseness=2] node[below]{$0$} (a0);
          \end{scope}
\end{tikzpicture}
\caption{Voltage graphs leading to quantum graphs on $M_2(\mathbb{C})$} with one, two, and three quantum edges.
        \label{ex::voltage_graphs}
\end{figure}

\begin{enumerate} 
\item Consider the first voltage graph from the left in Figure~\ref{ex::voltage_graphs}. With this labeling, we have 
\[
\tilde{A}=\left(\begin{pmatrix}0 & 0\\ 0 & 0\end{pmatrix},\begin{pmatrix}1 & 0\\ 0 & 1\end{pmatrix}\right).
\]
The quantum adjacency matrix of the derived quantum graph reads
\[
A=m(X_1\otimes E^*_\rtimes \tilde{A}_1E_\rtimes)m^*=\begin{pmatrix}1 & 0 & 0 & 0\\ 0 & -1 & 0 & 0\\ 0 & 0 & -1 & 0\\ 0 & 0 & 0 & 1\end{pmatrix}.
\]
The resulting quantum graph is the quantum graph on $(M_2,2{\rm Tr})$ whose quantum adjacency matrix is $P_3$, see Proposition~\ref{prop::gromada_tweaked}.

\item Consider the second voltage graph from the left in Figure~\ref{ex::voltage_graphs}. With this labeling, the adjacency matrix of the graph decomposes as follows
\[
\tilde{A}=\left(\begin{pmatrix}0 & 0\\ 0 & 0\end{pmatrix},\begin{pmatrix}1 & 1\\ 1 & 1\end{pmatrix}\right).
\]
The adjacency matrix of the derived quantum graph reads
\[
A=m(X_1\otimes E^*_\rtimes \tilde{A}_1E_\rtimes)m^*=\begin{pmatrix}1 & 0 & 0 & 1\\ 0 & -1 & -1 & 0\\ 0 & -1 & -1 & 0\\ 1 & 0 & 0 & 1\end{pmatrix}.
\]
The resulting quantum graph is the quantum graph on $(M_2, 2{\rm Tr})$ with the adjacency matrix $P_3+P_2$, see Proposition~\ref{prop::gromada_tweaked}. 

\item Consider the third voltage graph from the left in Figure~\ref{ex::voltage_graphs}. With this labeling, the adjacency matrix of this graph decomposes as follows
\[
\tilde{A}=\left(\begin{pmatrix}0 & 1\\ 1 & 0\end{pmatrix},\begin{pmatrix}1 & 1\\ 1 & 1\end{pmatrix}\right).
\]
Note that in this example the $\tilde{A}_0$ component is non-zero. The adjacency matrix of the derived quantum graph reads
\[
A=m(X_0\otimes E^*_\rtimes\tilde{A}_0E_\rtimes+X_1\otimes E^*_\rtimes \tilde{A}_1E_\rtimes)m^*=\begin{pmatrix}1 & 0 & 0 & 2\\ 0 & -1 & 0 & 0\\ 0 & 0 & -1 & 0\\ 2 & 0 & 0 & 1\end{pmatrix}.
\]
The resulting quantum graph is the quantum graph on $(M_2,2{\rm Tr})$ with the adjacency matrix $P_3+P_2+P_1$, see Proposition~\ref{prop::gromada_tweaked}. 
\end{enumerate}
Finally, the last statement of the proposition follows from Theorem~\ref{qiso::thm:quantum_isomorphism_between_derived_graphs}.
\end{proof}

\begin{remark}
Let $(\tilde{\Gamma},\lambda)$ be a pre-simple classical $\mathbb{Z}_2$-voltage graph with two vertices and assume that $(\tilde{\Gamma},\lambda)$ is invariant under the action $\hat{\alpha}$ of $\hat{\mathbb{Z}}_2$ given by swapping the vertices. Then the derived quantum graph $(\tilde{\Gamma},\lambda)\rtimes_{\hat{\alpha}}\hat{\mathbb{Z}}_2$ is isomorphic to an undirected loopfree quantum graph on $(M_2,2{\rm Tr})$. Indeed, proceeding as in the proof of Proposition~\ref{ex::prop:qgraphs_on_M2_as_derived_qgraphs}, one obtains that any $\tilde{A}=(\tilde{A}_0,\tilde{A}_1)$ coming from a pre-simple classical $\mathbb{Z}_2$-voltage graph with respect to the action $\hat{\alpha}$ is of the form
    \[
\tilde{A}_0=\begin{pmatrix}0 & b_0\\ b_0 & 0 \end{pmatrix},\qquad \tilde{A}_1=\begin{pmatrix}a_1 & b_1\\ b_1 & a_1\end{pmatrix},\qquad b_0,a_1,b_1\in\{0,1\}.
\]
Then a straightforward computation yields the adjacency matrix of the corresponding derived quantum graph:
\begin{equation}\label{ex::quantder1}
A=\begin{pmatrix}a_1 & 0 & 0 & a_1\\ 0 & -a_1 & -a_1 & 0 \\ 0 & b_0-b_1 & b_0-b_1 & 0\\ b_0+b_1 & 0 & 0 & b_0+b_1\end{pmatrix}.
\end{equation}
From Proposition~\ref{prop:voltage_to_derived_properties}, we infer that $A$ is undirected and loopfree.
\end{remark}

\printbibliography

\end{document}